\newtheorem{theorem}{Theorem}
\newtheorem{lemma}{Lemma}
\newtheorem{proposition}{Proposition}
\newtheorem{corollary}{Corollary}
\newtheorem{remark}{Remark}
\numberwithin{equation}{section}
\numberwithin{theorem}{section}
\numberwithin{lemma}{section}
\numberwithin{proposition}{section}
\numberwithin{corollary}{section}
\numberwithin{remark}{section}
\begin{document}
\title{Decay estimates of gradient of a generalized Oseen evolution operator
arising from time-dependent rigid motions in exterior domains}
\author{Toshiaki Hishida\thanks{
Partially supported by the Grant-in-aid for Scientific Research 18K03363
from JSPS} \\
Graduate School of Mathematics \\
Nagoya University \\
Nagoya 464-8602, Japan \\
\texttt{hishida@math.nagoya-u.ac.jp}}
\date{}
\maketitle
\begin{abstract}
Let us consider the motion of a viscous incompressible fluid 
past a rotating rigid body in 3D,
where the translational and angular velocities of the body 
are prescribed but time-dependent.
In a reference frame attached to the body, we have the Navier-Stokes system 
with the drift and
(one half of the) Coriolis terms in a fixed exterior domain.
The existence of the evolution operator 
$T(t,s)$ in the space $L^q$ generated by the linearized non-autonomous
system was proved by Hansel and Rhandi
\cite{HR14} and the large time behavior of $T(t,s)f$ in $L^r$ for
$(t-s)\to\infty$ was then developed by the present author \cite{Hi18}
when $f$ is taken from $L^q$ with $q\leq r$.
The contribution of the present paper concerns such 
$L^q$-$L^r$ decay estimates of
$\nabla T(t,s)$ with optimal rates,
which must be useful for the study of stability/attainability of the 
Navier-Stokes flow
in several physically relevant situations.
Our main theorem completely recovers the $L^q$-$L^r$ estimates for the 
autonomous case
(Stokes and Oseen semigroups, those semigroups with rotating effect)
in 3D exterior domains, which were established by
\cite{I}, \cite{MSol}, \cite{KS}, \cite{HiS} and \cite{Shi08}.
\end{abstract}

\section{Introduction}
\label{intro}

This paper is the continuation of the previous study \cite{Hi18}
on large time behavior of a generalized Oseen
evolution operator $T(t,s)$, which is the solution operator
$u(\cdot,s)=f\mapsto u(\cdot,t)$ to the initial value problem for the linear
non-autonomous system
\begin{equation}
\begin{split}
\partial_tu
&=\Delta u+(\eta(t)+\omega(t)\times x)\cdot\nabla u-\omega(t)\times u-\nabla p,
 \\
\mbox{div $u$}&=0, \\
u|_{\partial D}&=0, \\
u&\to 0\quad\mbox{as $|x|\to\infty$}, \\
u(\cdot,s)&=f,
\end{split}
\label{IVP}
\end{equation}
in $D\times (s,\infty)$, where $D$ is an exterior domain in $\mathbb R^3$
with $C^{1,1}$-boundary $\partial D$,
$\{u(x,t),p(x,t)\}$ with $u=(u_1,u_2,u_3)^\top$
is the pair of unknowns which are the velocity vector field and 
pressure of a viscous fluid, respectively, while
the solenoidal vector field $f(x)=(f_1,f_2,f_3)^\top$ is a given initial 
velocity at initial time $s\geq 0$
and $\{\eta(t),\omega(t)\}\in\mathbb R^{3\times 2}$ will be explained soon.
Here and in what follows,
$(\cdot)^\top$ stands for the transpose of vectors or matirices.
Problem \eqref{IVP} is a linearized system for the Navier-Stokes
problem modeling a viscous incompressible flow past an obstacle
$\mathbb R^3\setminus D$ (rigid body) that moves in a prescribed way.
One usually makes a transformation of variables
in order to reduce the problem to an equivalent one over the fixed domain
in a frame attached to the obstacle, see Galdi \cite{Ga02}
for details.
Then the resulting system is \eqref{IVP} (with $s=0$) in which the LHS
of the equation of motion should be
replaced by $\partial_tu+u\cdot\nabla u$ and the fluid velocity attains
the rigid motion $\eta+\omega\times x$ (no-slip condition) at the
boundary $\partial D$, where
$\eta(t)$ and $\omega(t)$ respectively
denote the translational and angular velocities of the rigid body
(after the transformation mentioned above).
This paper develops methods of analyzing 
the large time behavior of $T(t,s)$ for $(t-s)\to\infty$ 
when both translational and angular velocities are time-dependent.
Our conditions on this dependence are
\begin{equation}
\eta,\,\omega\in C^\theta([0,\infty); \mathbb R^3)
\cap L^\infty(0,\infty; \mathbb R^3)
\label{rigid}
\end{equation}
with some $\theta\in (0,1)$, which are the same as 
in the previous study \cite{Hi18}.

The well-posedness of \eqref{IVP}, that is, generation of the evolution
operator $\{T(t,s)\}_{t\geq s\geq 0}$ in the space $L^q$ for $1<q<\infty$
was successfully proved by Hansel and Rhandi \cite{HR14} under
the condition
\begin{equation}
\eta,\,\omega\in C_{loc}^\theta([0,\infty); \mathbb R^3)
\label{loc-hoelder}
\end{equation}
with some $\theta\in (0,1)$.
It is reasonable not to need the global behavior \eqref{rigid}
just for the well-posedness of \eqref{IVP} and for regularity of the solution.
They also derived a remarkable $L^q$-$L^r$ smoothing action near the
initial time, that is,
\begin{equation}
\|T(t,s)f\|_r\leq C(t-s)^{-(3/q-3/r)/2}\|f\|_q
\label{LqLr}
\end{equation}
\begin{equation}
\|\nabla T(t,s)f\|_r\leq C(t-s)^{-(3/q-3/r)/2-1/2}\|f\|_q
\label{LqLr-grad}
\end{equation}
for $0\leq s<t\leq {\cal T}$ and $1<q\leq r<\infty$
with some constant $C>0$ that depends on ${\cal T}\in (0,\infty)$,
where $\|\cdot\|_q$ denotes the norm of the space $L^q(D)$.
Later on, the present author \cite{Hi18} has developed the 
$L^q$-$L^r$ decay estimate of $T(t,s)$, namely,
\eqref{LqLr} for all $t>s\geq 0$ and $1<q\leq r<\infty$ with some constant
$C>0$ independent of $(t,s)$.
A duality argument is one of ingredients of the proof, so that the
$L^q$-$L^r$ estimate of the adjoint evolution operator $T(t,s)^*$
has been also deduced in \cite{Hi18}
simultaneously with \eqref{LqLr}.
Note that the adjoint $T(t,s)^*$ is the solution operator
$v(\cdot,t)=g\mapsto v(\cdot,s)$ of the backward problem for the adjoint
system subject to the final condition at $t>0$,
see \eqref{back} below.
However, the decay estimate of $\nabla T(t,s)$
with optimal rate has remained open (\cite[Remark 2.1]{Hi18}).

The purpose of the present paper is to develop the gradient estimate
of the evolution operator for $(t-s)\to\infty$.
Our main theorem 
(Theorem \ref{main}, particularly the first assertion) provides us with
\eqref{LqLr-grad} for all $t>s\geq 0$ and $1<q\leq r\leq 3$. 
The rate of decay of $\nabla T(t,s)$ 
for the other case $1<q\leq r\in (3,\infty)$
is also discussed and it is given by
$(t-s)^{-3/2q}$.
In addition, we obtain the $L^q$-$L^\infty$ decay estimate of
$T(t,s)$ as well, that is, \eqref{LqLr} with $1<q<r=\infty$
for all $t>s\geq 0$.
Our theorem completely recovers the $L^q$-$L^r$ estimates for the
autonomous case developed by \cite{I}, \cite{MSol}
(both for the Stokes semigroup $\eta=\omega=0$),
\cite{KS}, \cite{ES04}, \cite{ES05}
(those three for the Oseen semigroup with constant $\eta\neq 0,\,\omega=0$),
\cite{HiS} (semigroup with constant $\omega\neq 0,\, \eta=0$) and
\cite{Shi08} (semigroup with constants $\eta\neq 0,\,\omega\neq 0$).
Therefore, analysis in this paper can be regarded as a unified approach 
not only for all the cases of uniform rigid motions
but for several cases of time-dependent ones.
Our result cannot be improved in general because
Maremonti and Solonnikov \cite{MSol} and the present author \cite{Hi11}
observed that the rate of decay of $\nabla T(t,s)$
in our theorem is optimal when $\eta=\omega=0$
(case of the Stokes semigroup).
Nevertheless, there might be a chance of improvement when $\eta\neq 0$;
for further discussion about the optimality, see Remark \ref{optimal}.

In view of the celebrated paper \cite{Ka} by Kato,
it is clear that we have several
applications of the complete $L^q$-$L^r$ estimates
\eqref{LqLr}--\eqref{LqLr-grad} for all $t>s\geq 0$
obtained in this paper.
In \cite{Hi18} (see also \cite{Hi19} for further development)
the present author has proposed 
a new way of constructing a unique Navier-Stokes flow
globally in time by use only of \eqref{LqLr}
combined with the energy relation (see \cite[Lemma 5.1]{Hi18}), 
but the solution constructed
in such a way possesses less information about the large time behavior;
in fact, an improvement of Theorem 5.1 of \cite{Hi18} 
by using \eqref{LqLr-grad} with $r=3$ for all $t>s\geq 0$ is obvious.
Since the same estimate for the adjoint $T(t,s)^*$ is
available in the Lorentz spaces as well, 
see \eqref{adj-Lorentz} in Theorem \ref{adj} below,
we must have even more applications 
with the aid of interpolation technique developed by Yamazaki \cite{Y}.
Once we have \eqref{adj-Lorentz},
his insight brings us the sharp estimate \eqref{adj-int}, which is quite
useful to study the stability/attainability of several physically relevant
background flows (not only steady flow but also time-dependent flows
such as time-periodic one) being in the scale-critical Lorentz space
$L^{3,\infty}$ (weak-$L^3$ space).
This is indeed the case if, for instance, the obstacle is
purely rotating or at rest without translation,
where the optimality of the decay rate $|x|^{-1}$ for
generic flow is interpreted in terms of asymptotic structure at infinity, see
\cite{KSv}, \cite{FH11}, \cite{FGK} and \cite{Hi-hb}.
Several applications of our main theorems will be discussed elsewhere.
Let us just mention, as one of them, a problem of attainablity of 
a (small) steady flow
around a rigid body rotating from rest
(that was raised by \cite[Section 6]{Hi13}).
This is called the starting problem and was proposed first by
Finn \cite{Fi65} in the case when the rotation 
was replaced by translation of the body.
Finn's problem was successfully solved by Galdi, Heywood and Shibata \cite{GHS}
by making use of the $L^q$-$L^r$ estimate of the Oseen semigroup \cite{KS},
see also \cite{HiM} for further contributions,
however, the same approach with the aid of the $L^q$-$L^r$ estimate due
to \cite{HiS} no longer works for the question above because of
unbounded coefficient $\omega\times x$ of the drift term.
The right approach seems to be use of the results obtained here
for the non-autonomous system,
see \cite{Ta-new}.
Another application of our theorems would be the attainability of a
time-periodic flow arising from time-periodic translation 
with zero average (oscillation like back and forth),
whose existence has been recently proved by Galdi \cite{Ga-new}.

The proof of our main theorem 
consists of two stages:
One is the so-called local energy decay estimates over a bounded domain 
$D\cap B_R$ (near the obstacle),
see Propositions \ref{1st-LED} and \ref{2nd-LED},
the other is a decay estimate outside $B_R$ (near inifinity), 
where $B_R$ denotes the open ball centered at 
the origin with radius $R>0$.
Indeed this combination itself was adopted by several authors
(\cite{I}, \cite{KS}, \cite{ES05}, \cite{HiS} for 3D, \cite{DS-1}, 
\cite{DS-2}, \cite{Hi16}, \cite{Mae} for 2D)
for the autonomous case,
but what is new is to deduce the former without spectral
analysis.
In fact, our assumption \eqref{rigid} is too general
(without any specific structure such as time-periodicity)
to carry out the spectral analysis.
Note, however, that analysis of the resolvent near $\lambda=0$ 
is the essential and hard step
for the autonomous case in the literature above, where $\lambda$ denotes 
the spectral parameter.
We also refer the readers to a recent work \cite{Shi-aa} 
on the autonomous case by Shibata, who has developed even more
in the resolvent side to furnish the $L^q$-$L^r$ decay estimates.
In this paper, 
\eqref{LqLr} for all $t>s\geq 0$ plays a role to obtain 
the local energy decay estimates
(note that it is the opposite way to the argument in the literature mentioned 
above in which
\eqref{LqLr} was a conclusion of the local energy decay estimates),
but such estimates of $\nabla T(t,s)$ are not enough since we 
have to control the behavior
of the pressure at the other stage of deduction of decay estimates 
near infinity.
The natural idea is to analyze the asymptotic behavior,
both for $(t-s)\to\infty$ and for $(t-s)\to 0$,
of the temporal derivative $\partial_tT(t,s)$
in the Sobolev space of order $(-1)$ over the bounded domain $D\cap B_R$.
To this end, we need to develop more analysis of regularity of the
evolution operator $T(t,s)$, see Proposition \ref{weak},
than the one done by Hansel and Rhandi \cite{HR14}.
Analysis of $\partial_tT(t,s)$ is in fact very nontrivial since the 
corresponding autonomous operator
is no longer generator of an analytic semigroup in the space $L^q$
unless $\omega=0$,
see \cite{Hi99}, \cite{FN10} and the references therein, and it can be
regarded as a substitution of Section 5 of \cite{HiS} for the autonomous
case (semigroup with constant $\omega\neq 0$),
in which the authors made full use of precise behavior of parametrix of
the resolvent with respect to the spectral parameter.

It is worth while summarizing the method developed in the present
paper together with the previous study \cite{Hi18}.
The clue at the beginning toward analysis of large time
behavior of \eqref{IVP} would be:
\smallskip

(i) $L^q$-$L^r$ estimates \eqref{LqLr-whole}
for the same system in the whole space;

(ii) energy relations \cite[(2.15), (2.23)]{Hi18}
for $T(t,s)$ and its adjoint;
\smallskip

\noindent
both of which are clear because the equation in \eqref{IVP}
is derived only from the transformation of variables concerning (i)
and because the additional terms arising from this transformation
are skew-symmetric concerning (ii).
Those are fine, however, we would say that the only fine things
for \eqref{IVP} are them.
Note that, except for (ii), one does not have useful higher energy
estimates (which play an important role in \cite{MSol} for the
Stokes semigroup) unless $\eta=\omega=0$.
In \cite{Hi18} some devices by use of the energy (ii)
enable us to show the uniform boundedness
of $T(t,s)$ and $T(t,s)^*$ in $L^r$ with $r\in (2,\infty)$ 
by duality argument with the aid of (i) via cut-off procedure.
With this at hand, the deduction of \eqref{LqLr} for all
$t>s\geq 0$ can be reduced to computations of a differential inequality
\cite[Lemma 4.1, Lemma 4.2]{Hi18}.
And then, in this paper,
\eqref{LqLr} combined with a detailed analysis of $\partial_tT(t,s)$
leads us to \eqref{LqLr-grad} for all $t>s\geq 0$ and $1<q\leq r\leq 3$
as explained in the previous paragraph.
To sum up, along the approach proposed in both papers, once we have
(i) and (ii) above, we are able to deduce the large time behavior
of $\nabla^jT(t,s)$ with $j=0,1$ in 3D exterior domains.
In more involved 2D case, however, the method developed in 
\cite{Hi18} unfortunately does not work well, 
see \cite[Remark 4.1]{Hi18} for the difficulties.
Concerning the $L^q$-$L^r$ estimate for the autonomous case 
in 2D exterior domains,
we refer to \cite{DS-1}, \cite{DS-2}, \cite{MSol} (for the Stokes semigroup)
and \cite{Hi16}, \cite{Mae}
(for the Oseen semigroup, 
where the latter is a significant refinement of the former).
For the case of rotating obstacle, the desired decay property 
has still remained open in 2D even if $\omega\neq 0$ is a constant vector.

This paper is organized as follows.
In the next section,
after summarizing the knowledge from \cite{HR14} and \cite{Hi18},
we present the main theorems.
We need further analysis of the same system in the whole space and the one 
in bounded domains, which are not covered by the literature.
They are performed in Section \ref{whole} 
and Section \ref{interior}, respectively.
Along the way of constructing the evolution
operator due to \cite{HR14}, in Section \ref{regularity},
we develop more analysis of its regularity, in particular, smoothing rate 
as well as justification of
the temporal derivative $\partial_tT(t,s)f$ for general solenoidal vector 
field $f$ being in the space $L^q$.
Local energy decay estimates of the evolution operator near the
obstacle are established in Section \ref{local}.
The final section is devoted to completion of the proof
of the main theorems by showing the decay estimate of the
evolution operator near spatial infinity.

\section{Results}
\label{result}

Let us begin with introducing notation.
Given two vector fields $u$ and $v$, we denote by $u\otimes v$
the matrix $(u_iv_j)$.
Let $A=(A_{ij}(x))$ be a $3\times 3$ matrix-valued function, 
then the vector field
$\mbox{div $A$}$ is defined by
$(\mbox{div $A$})_i=\sum_j\partial_{x_j}A_{ij}$.
By following this rule, the drift and Coriolis terms in \eqref{IVP} can 
be expressed as
\[
(\eta+\omega\times x)\cdot\nabla u
=\mbox{div $\big[u\otimes (\eta+\omega\times x)\big]$}, \qquad
\omega\times u
=\mbox{div $\big[(\omega\times x)\otimes u\big]$},
\]
the latter of which follows from $\mbox{div $u$}=0$.
Those expressions appear in 
\eqref{weak-eqn-wh}, \eqref{weak-eqn-bdd} and \eqref{weak-eqn} below.

Given a domain $G\subset \mathbb R^3$, $q\in [1,\infty]$ and integer $k\geq 0$,
the standard Lebesgue and Sobolev spaces are denoted by
$L^q(G)$ and by $W^{k,q}(G)$.
We abbreviate the norm $\|\cdot\|_{q,G}=\|\cdot\|_{L^q(G)}$ and even
$\|\cdot\|_q=\|\cdot\|_{q,D}$, where $D$ is the exterior domain
under consideration with $C^{1,1}$-boundary $\partial D$.

Throughout this paper, we fix a number
$R_0>0$ so large that
\begin{equation}
\mathbb R^3\setminus D\subset B_{R_0},
\label{obstacle}
\end{equation}
where $B_R$ denotes the open ball centered at the origin
with radius $R>0$.
We set $D_R=D\cap B_R$ for $R\in [R_0,\infty)$.

The class $C_0^\infty(G)$ consists of all $C^\infty$ functions
with compact support in $G$, then $W^{k,q}_0(G)$ denotes
the completion of $C_0^\infty(G)$ in $W^{k,q}(G)$,
where $k>0$ is an integer.
We set $W^{-1,q}(G)=W^{1,q^\prime}_0(G)^*$, where
$1/q^\prime+1/q=1$ and $q\in (1,\infty)$.
By $\langle\cdot,\cdot\rangle_G$ we denote various
duality pairings over the domain $G$.
In what follows we adopt the same symbols for denoting 
scalar and vector (even tensor)
function spaces as long as there is no confusion.

Let $X_1$ and $X_2$ be two Banach spaces.
Then ${\cal L}(X_1,X_2)$ stands for the Banach space consisting of all
bounded linear operators from $X_1$ into $X_2$.
We simply write ${\cal L}(X_1)={\cal L}(X_1,X_1)$.

Consider the boundary value problem
\[
\mbox{div $w$}=f \;\;\mbox{in $G$}, \qquad
w|_{\partial G}=0,
\]
where $G$ is a bounded domain in $\mathbb R^3$ with Lipschitz boundary
$\partial G$.
Let $1<q<\infty$.
Given $f\in L^q(G)$ with compatibility condition
$\int_G f\,dx=0$,
there are a lot of solutions, some of which were found by many authors,
see Galdi \cite[Notes for Chapter III]{Ga-b}.
Among them a particular solution discovered by Bogovskii \cite{B}
is useful to recover the solenoidal condition in a cut-off procedure
on account of some fine properties of his solution.
The operator $f\mapsto \mbox{his solution $w$}$,
called the Bogovskii operator, is well defined as follows
(for details, see \cite{BS}, \cite{Ga-b}):
there is a linear operator
$\mathbb B_G: C_0^\infty(G)\to C_0^\infty(G)^3$ such that,
for $1<q<\infty$ and $k\geq 0$ integers,
\begin{equation}
\|\nabla^{k+1}\mathbb B_Gf\|_{q,G}\leq C\|\nabla^kf\|_{q,G}
\label{bog-est-1}
\end{equation}
with some $C=C(G,q,k)>0$,
which is invariant with respect to dilation of the domain $G$, and that
\begin{equation}
\mbox{div $(\mathbb B_Gf)$}=f \qquad
\mbox{if}\;\; \int_G f(x)\,dx=0.
\label{bogov}
\end{equation}
By continuity, $\mathbb B_G$ extends uniquely to a bounded operator
from $W^{k,q}_0(G)$ to $W^{k+1,q}_0(G)^3$.
In \cite[Theorem 2.5]{GHH-b} Geissert, Heck and Hieber proved that
$\mathbb B_G$ can also extend to a bounded operator
from $W^{1,q^\prime}(G)^*$ to $L^q(G)^3$, that is,
\begin{equation}
\|\mathbb B_Gf\|_{q,G}\leq C\|f\|_{W^{1,q^\prime}(G)^*},
\label{bog-est-2}
\end{equation}
where $1/q^\prime+1/q=1$.
Note that this is not true from $W^{-1,q}(G)$
to $L^q(G)^3$, see Galdi \cite[Chapter III]{Ga-b}, who
nevertheless proved that
\begin{equation}
\|\mathbb B_G[\mbox{div $F$}]\|_{q,G}
\leq C\|F\|_{q,G}
\label{bog-est-3}
\end{equation}
holds true for $F\in L^q(G)^3$ satisfying the vanishing normal trace condition
$\nu\cdot F|_{\partial G}=0$ as well as
$\mbox{div $F$}\in L^q(G)$ (\cite[Theorem III.3.4]{Ga-b}).
Instead of \eqref{bog-est-2}, one can employ \eqref{bog-est-3} 
to discuss some delicate terms arising from cut-off procedures.

Let us introduce the solenoidal function space.
Let $G\subset\mathbb R^3$ be one of the following domains;
the exterior domain $D$ under consideration,
a bounded domain with $C^{1,1}$-boundary $\partial G$ and
the whole space $\mathbb R^3$.
The class $C_{0,\sigma}^\infty(G)$ consists of
all divergence-free vector fields being in $C_0^\infty(G)$.
Let $1<q<\infty$.
By $L^q_\sigma(G)$ we denote the completion of $C_{0,\sigma}^\infty(G)$
in $L^q(G)$, then it is characterized as
\[
L^q_\sigma(G)=\{u\in L^q(G);\,\mbox{div $u$}=0,\, \nu\cdot u|_{\partial G}=0\},
\]
where $\nu$ stands for the outer unit normal to $\partial G$
and $\nu\cdot u$ is understood in the sense of normal trace on $\partial G$
(this boundary condition is absent when $G=\mathbb R^3$).
The space of $L^q$-vector fields admits the Helmholtz decomposition
\[
L^q(G)=L^q_\sigma(G)\oplus
\{\nabla p\in L^q(G);\, p\in L^q_{loc}(\overline{G})\},
\]
which was proved by Fujiwara and Morimoto \cite{FM},
Miyakawa \cite{Mi} and Simader and Sohr \cite{SiS}.
By $P_G=P_{G,q}: L^q(G)\to L^q_\sigma(G)$,
we denote the Fujita-Kato projection
associated with the decompostion above.
We then see that
$P_G\in {\cal L}(W^{1,q}(G))$ as well as
$P_G\in {\cal L}(L^q(G))$.
Note the duality relation $(P_{G,q})^*=P_{G,q^\prime}$ as well as
$L^q_\sigma(G)^*=L^{q^\prime}_\sigma(G)$,
where $1/q^\prime+1/q=1$.
We simply write $P=P_D$ for the exterior domain $D$ under consideration.
Finally, we denote several positive constants by $C$, which may
change from line to line.

We are in a position to introduce the generators which are related to
\eqref{IVP} and to the backward problem for the adjoint system subject
to the final condition at $t>0$:
\begin{equation}
\begin{split}
-\partial_sv
&=\Delta v-(\eta(s)+\omega(s)\times y)\cdot\nabla v
+\omega(s)\times v+\nabla\sigma, \\
\mbox{div $v$}&=0, \\
v|_{\partial D}&=0, \\ 
v&\to 0\quad\mbox{as $|y|\to\infty$}, \\
v(\cdot,t)&=g,
\end{split}
\label{back}
\end{equation}
in $D\times [0,t)$, where $\{v(y,s),\sigma(y,s)\}$ is the pair of unknowns.
Let us define the operators $L_\pm(t)$ by
\begin{equation}
\begin{split}
D_q(L_\pm(t))&=\{u\in L^q_\sigma(D)\cap W^{1,q}_0(D)\cap W^{2,q}(D);\,
(\omega(t)\times x)\cdot\nabla u\in L^q(D)\}, \\
L_\pm(t)u&=-P[\Delta u \pm(\eta(t)+\omega(t)\times x)\cdot\nabla u
\mp\omega(t)\times u]   \\
\end{split}
\label{generator}
\end{equation}
Then we have
\begin{equation}
\langle L_\pm(t)u, v\rangle_D=\langle u, L_\mp(t)v\rangle_D
\label{skew}
\end{equation}
for all $u\in D_q(L_\pm(t))$ and $v\in D_{q^\prime}(L_\mp(t))$,
see \cite[(2.12)]{Hi18}, where $1/q^\prime+1/q=1$.
Since the domain is time-dependent, as in Hansel and Rhandi \cite{HR14},
we need the regularity spaces
\begin{equation}
\begin{split}
Y_q(D)&=\{u\in L^q_\sigma(D)\cap W^{1,q}_0(D)\cap W^{2,q}(D);\,
|x|\nabla u\in L^q(D)\}, \\
Z_q(D)&=\{u\in L^q_\sigma(D)\cap W^{1,q}(D);\, |x|\nabla u\in L^q(D)\},
\end{split}
\label{auxi}
\end{equation}
which are Banach spaces endowed with norms
\[
\|u\|_{Y_q(D)}=\|u\|_{W^{2,q}(D)}+\||x|\nabla u\|_q,  \quad
\|u\|_{Z_q(D)}=\|u\|_{W^{1,q}(D)}+\||x|\nabla u\|_q,
\]
respectively.
Note that $Y_q(D)\subset D_q(L_\pm(t))$ for every $t\geq 0$ and that,
differently from \cite{HR14},
the homogeneous Dirichlet condition at $\partial D$ is not
involved in the space $Z_q(D)$.
The reason why this modification is actually needed will be clarified
in Section \ref{regularity}.

Hansel and Rhandi \cite{HR14} proved the following.
\begin{proposition}
[\cite{HR14}]
Suppose that $\eta$ and $\omega$ fulfill \eqref{loc-hoelder}
for some $\theta\in (0,1)$.
Let $1<q<\infty$.
The operator family $\{L_+(t)\}_{t\geq 0}$ 
generates an evolution operator $\{T(t,s)\}_{t\geq s\geq 0}$
on $L^q_\sigma(D)$ such that $T(t,s)$ is a bounded operator from 
$L^q_\sigma(D)$ into itself with
the semigroup property
\begin{equation}
T(t,\tau)T(\tau,s)=T(t,s) \quad (t\geq\tau\geq s\geq 0); \qquad T(s,s)=I,
\label{semi}
\end{equation}
in ${\cal L}(L^q_\sigma(D))$ and that the map
\[
\{t\geq s\geq 0\}\ni (t,s)\mapsto T(t,s)f\in L^q_\sigma(D)
\]
is continuous for every $f\in L^q_\sigma(D)$.
Furthermore, we have the following properties.

\begin{enumerate}
\item
Let $q\leq r<\infty$.
For each ${\cal T}\in (0,\infty)$ and $m\in (0,\infty)$, there is a constant
$C=C({\cal T},m,q,r,\theta,D)>0$ such that \eqref{LqLr} and \eqref{LqLr-grad}
hold for all $(t,s)$
with $0\leq s<t\leq{\cal T}$ and $f\in L^q_\sigma(D)$ whenever
\[
\sup_{0\leq t\leq{\cal T}}(|\eta(t)|+|\omega(t)|)
+\sup_{0\leq s<t\leq{\cal T}}
\frac{|\eta(t)-\eta(s)|+|\omega(t)-\omega(s)|}{(t-s)^\theta}\leq m.
\]

\item
Let $3/2<q<\infty$ and fix $s\geq 0$.
For every $f\in Z_q(D)$ and $t\in (s,\infty)$, we have
$T(t,s)f\in Y_q(D)$ and
\[
T(\cdot,s)f\in C^1((s,\infty); L^q_\sigma(D))
\]
with
\begin{equation}
\partial_tT(t,s)f+L_+(t)T(t,s)f=0, \qquad t\in (s,\infty),
\label{evo-eqn}
\end{equation}
in $L^q_\sigma(D)$.

\item
Fix $t>0$.
For every $f\in Y_q(D)$, we have
\[
T(t,\cdot)f\in C^1([0,t]; L^q_\sigma(D))
\]
with
\[
\partial_sT(t,s)f=T(t,s)L_+(s)f, \qquad  s\in [0,t],
\]
in $L^q_\sigma(D)$.
\end{enumerate}
\label{basic}
\end{proposition}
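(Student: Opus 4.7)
The plan is to construct $T(t,s)$ by a freeze-coefficient (Tanabe--Acquistapace--Terreni) scheme built on the autonomous theory for rigid motion. First, for each fixed $t_0 \geq 0$, the operator $L_+(t_0)$ generates a $C_0$-semigroup on $L^q_\sigma(D)$ by the work of Hishida (for pure rotation) and Shibata (for the combined translation--rotation case); this semigroup is not analytic because of the unbounded coefficient $(\omega(t_0)\times x)\cdot\nabla$, but it already enjoys the required $L^q$-$L^r$ smoothing and gradient estimates on bounded time intervals, together with the skew-symmetric cancellation underlying \eqref{skew}. Partitioning $[s,\mathcal T]$ with mesh $\delta>0$ and composing the autonomous semigroups $e^{-(t_{j+1}-t_j)L_+(t_j)}$ one obtains approximate evolution operators $T^{(\delta)}(t,s)$; the H\"older continuity hypothesis \eqref{loc-hoelder} supplies the consistency bound $\|L_+(t)u-L_+(\tau)u\|\lesssim (t-\tau)^\theta\|u\|_{Y_q}$ that drives the Cauchy-in-$\delta$ convergence, and the semigroup property \eqref{semi} and strong continuity in $(t,s)$ are built into the construction.

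Second, for the finite-time smoothing estimates in item~1, I would use a cutoff decomposition. Pick $R>R_0$ and a smooth cutoff $\phi$ supported in $B_R$ and equal to $1$ on $B_{R_0}$; split $T(t,s)f$ into an exterior piece extended by zero across $\partial D$ and an interior piece supported in $D_R$. The exterior piece is analyzed via the explicit whole-space evolution operator obtained from the heat kernel composed with the rotation--translation change of variables driven by $\eta,\omega$, while the interior piece falls under standard parabolic theory on the bounded domain $D_R$. The Bogovski\u\i\ operator \eqref{bog-est-1} restores the divergence-free condition across the interface, and the $\mathcal T$-dependence of the constants enters through the accumulated rotation and translation along $[0,\mathcal T]$.

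For items~2 and~3, the regularity spaces $Y_q(D)$ and $Z_q(D)$ are designed precisely so that the unbounded term $(\omega\times x)\cdot\nabla u$ remains in $L^q_\sigma(D)$ via the weight $|x|\nabla u$. If $f\in Y_q\subset D_q(L_+(s))$ (here the restriction $q>3/2$ is used to turn the $Z_q$-regularity into membership in the operator domain through Sobolev embedding for the lower-order cross term), then differentiating \eqref{semi} in $\tau$ at $\tau=s$ produces the backward identity $\partial_sT(t,s)f=T(t,s)L_+(s)f$. The forward equation \eqref{evo-eqn} requires the parabolic-type regularization $T(t,s)\colon Z_q\to Y_q$ for $t>s$, obtained by bootstrapping weighted a~priori estimates in $Z_q$ through the frozen-coefficient approximants.

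The principal obstacle throughout is the non-analyticity of the autonomous semigroups caused by the unbounded drift $(\omega(t)\times x)\cdot\nabla$, which rules out the standard sectorial construction via resolvent bounds and forces the entire analysis to be carried out in the weighted spaces $Y_q,Z_q$ with careful tracking of the H\"older constants at every stage of the freeze-coefficient scheme. A secondary but delicate point is verifying that the cutoff decomposition in item~1 is compatible with the time-dependent rigid motion, since the change of variables that linearizes the drift on the whole space is incompatible with the fixed boundary $\partial D$; this is precisely the reason why items~2 and~3 are stated in the weighted classes rather than in the autonomous operator domain.
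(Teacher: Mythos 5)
Your proposal goes off track at the very first step: you construct $T(t,s)$ by a Tanabe--Acquistapace--Terreni freeze-coefficient scheme, composing the autonomous semigroups $e^{-(t_{j+1}-t_j)L_+(t_j)}$ generated by the frozen operators. But, as you yourself note, those frozen generators are \emph{not} sectorial on $L^q_\sigma(D)$ when $\omega(t_j)\ne 0$ (the paper cites \cite{Hi99}, \cite{FN10} for this), and the entire convergence mechanism of the Tanabe scheme -- bounding the iterated remainder integrals by exploiting the $(t-s)^{-1}$ bound on $L_+(t)e^{-(t-s)L_+(s)}$ -- lives and dies by exactly that analyticity. The H\"older estimate $\|L_+(t)u-L_+(\tau)u\|\lesssim(t-\tau)^\theta\|u\|_{Y_q}$ alone does not rescue it: without the parabolic singularity structure, the iterated convolution integrals do not close, and what one would get from the hyperbolic Kato theory under a stability hypothesis is an evolution operator with no smoothing at all, i.e.\ nothing resembling \eqref{LqLr} or \eqref{LqLr-grad}.

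The construction that actually works (due to Hansel--Rhandi, and recalled in Section~\ref{regularity} of the paper) sidesteps the issue by \emph{not} taking frozen semigroups as the parametrix. Instead one glues, via cutoffs and the Bogovski\u\i\ operator, the explicit whole-space evolution operator $U(t,s)$ from \eqref{evo-wh} (a conjugated heat semigroup, hence as smoothing as one could wish) to the bounded-domain evolution operator $V(t,s)$ on $D_{R_0+6}$; on the bounded domain the coefficient $\omega(t)\times x$ is a bounded perturbation of the Stokes operator, so there the classical Tanabe theory \emph{does} apply and $V(t,s)$ is genuinely parabolic. The resulting first approximation $W(t,s)$ in \eqref{1st-appro} then feeds into the integral equation \eqref{int-eq}, solved by the iteration \eqref{iteration} which converges by Lemma~\ref{convo} because the cut-off remainder $PK(t,s)$ has the integrable singularity $(t-s)^{-(1+1/q)/2}$. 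So the cutoff decomposition you invoke for item~1 is in fact the core of the \emph{construction} itself, not a post-hoc device for the smoothing estimates; and it is the analyticity on the two separate pieces ($\mathbb R^3$ and $D_R$), not of the global frozen operator, that drives everything.

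Your explanation of the restriction $q>3/2$ is also off. It is not a Sobolev-embedding condition on the cross term; it comes from the weighted estimate for the Fujita--Kato projection in Lemma~\ref{proj}, whose proof reduces to a Riesz-transform bound in the weighted space $L^q(|x|^q\,dx)$, and $|x|^q$ belongs to the Muckenhoupt class $\mathcal A_q(\mathbb R^3)$ precisely when $q>\frac{n}{n-1}=\frac{3}{2}$. That Muckenhoupt condition is what allows one to show $PK(t,s)f\in Z_q(D)$ (estimate \eqref{est-PR}) and hence to iterate in the space $Z_q(D)$ rather than in the Dirichlet-constrained space $Z_{q,0}(D)$ originally used in \cite{HR14} -- the correction of that oversight being precisely the content of the ``improvement'' in assertion~2 that the paper actually has to prove.
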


Among the assertions above, the second one
tells us that $T(t,s)f$ provides a strong solution without assuming
$f|_{\partial D}=0$ nor $\nabla^2f\in L^q(D)$.
This is a slight improvement of the corresponding result in 
\cite[Theorem 2.4 (b)]{HR14},
which claims the same for $f\in Y_q(D)$.
The proof of this improvement only in the second assertion
will be given in Section \ref{regularity}.
The restriction $q\in (3/2,\infty)$ stems from Lemma \ref{proj}
(and it seemed to be overlooked in \cite{HR14}).
Thus the corresponding part of Proposition 2.1 of \cite{Hi18}
should be replaced by the second assertion above.
Nevertheless, we observe that
the semigroup property \eqref{semi} in ${\cal L}(L^q_\sigma(D))$
holds still for every $q\in (1,\infty)$.
In fact, given $f\in C_{0,\sigma}^\infty(D)$,
it follows from the second and third assertions that
$\partial_\tau\big(T(t,\tau)T(\tau,s)f\big)=0$
in $L^q_\sigma(D)$ with $q\in (3/2,\infty)$, yielding
$T(t,\tau)T(\tau,s)f=T(t,s)f$.
Once we have that for all $f\in C_{0,\sigma}^\infty(D)$,
a continuity argument leads to the same equality for all
$f\in L^q_\sigma(D)$ with $q\in (1,\infty)$.

We should mention that the results obtained in the previous study
\cite{Hi18} are still valid in spite of the restriction $q\in (3/2,\infty)$
above.
Let $S(t,s)$ be the evolution operator generated by the backward problem
\begin{equation}
-\partial_sv(s)+L_-(s)v(s)=0, \quad s\in [0,t); \qquad
v(t)=g
\label{backward}
\end{equation}
in $L^q_\sigma(D)$, which corresponds to \eqref{back}.
It is given by
\begin{equation}
S(t,s)=\widetilde T(t-s,0; t), \qquad t\geq s\geq 0,
\label{back-op}
\end{equation}
where $\{\widetilde T(\tau,s; t)\}_{0\leq s\leq \tau\leq t}$ 
is the evolution operator generated by
the related initial value problem
\begin{equation}
\partial_\tau w(\tau)+L_-(t-\tau)w(\tau)=0, \quad \tau\in (s,t]; \qquad
w(s)=g,
\label{ivp-adj}
\end{equation}
see \cite[Subsection 2.3]{Hi18}.
For \eqref{ivp-adj}, note that $t>0$ is just a parameter 
appearing in the coefficient of the equation.
We then have the duality relation \cite[Lemma 2.1]{Hi18}
\begin{equation}
T(t,s)^*=S(t,s), \qquad S(t,s)^*=T(t,s) \qquad
\mbox{in ${\cal L}(L^q_\sigma(D))$}
\label{duality}
\end{equation}
for $t\geq s\geq 0$,
which plays an important role in \cite{Hi18}.
In fact, given $f,\, g\in C_{0,\sigma}^\infty(D)$
(instead of $f\in Y_{q^\prime}(D),\, g\in Y_q(D)$ in the proof
of \cite[Lemma 2.1]{Hi18}, where $1/q^\prime+1/q=1$), we obtain
\begin{equation}
\langle T(t,s)f, g\rangle_D
=\langle f, S(t,s)g\rangle_D
\label{dual-TS}
\end{equation}
by computing
$\partial_\tau \langle T(\tau,s)f, S(t,\tau)g\rangle_D=0$ with use of
\eqref{evo-eqn} as well as
\begin{equation}
-\partial_s S(t,s)g+L_-(s)S(t,s)g=0, \qquad s\in [0,t),
\label{adj-evo}
\end{equation}
in $L^q_\sigma(D)$, where $\langle\cdot, \cdot\rangle_D$
should be understood for the pair of
$L^{q^\prime}_\sigma(D)$ and $L^q_\sigma(D)$ with $q\in (3/2,3)$.
Once we have \eqref{dual-TS} for all $f,\, g\in C_{0,\sigma}^\infty(D)$,
we have only to perform a continuity argument to justify
\eqref{duality} for every $q\in (1,\infty)$.
In addition, as emphasized in Section \ref{intro},
one of key ingredients in \cite{Hi18} is the energy relation
which we certainly have since the second assertion of Proposition \ref{basic}
is available in $L^2_\sigma(D)$.
Finally, as described in \cite[Section 4]{Hi18} for the proof
of decay estimates,
it suffices to carry out a cut-off procedure for fine
initial velocities being in $C_{0,\sigma}^\infty(D)$, so that
the restriction $q\in (3/2,\infty)$ does not cause any problem.

We recall the following $L^q$-$L^r$ estimates globally in time
developed by the present author
\cite[Theorem 2.1, Proposition 3.1]{Hi18}.
Let us introduce
\begin{equation}
\begin{split}
&|(\eta,\omega)|_0=\sup_{t\geq 0}\;(|\eta(t)|+|\omega(t)|), \\
&|(\eta,\omega)|_\theta=\sup_{t>s\geq 0}
\frac{|\eta(t)-\eta(s)|+|\omega(t)-\omega(s)|}{(t-s)^\theta}
\end{split}
\label{hoelder}
\end{equation}
and
\begin{equation}
\Lambda(\tau_*)
=\{(t,s);\,t>s\geq 0,\; t-s\leq\tau_*\}
\label{para-set}
\end{equation}
for $\tau_*\in (0,\infty)$.
\begin{proposition}
[\cite{Hi18}]
Suppose that $\eta$ and $\omega$ fulfill \eqref{rigid} for 
some $\theta\in (0,1)$.
Let $1<q\leq r<\infty$.

\begin{enumerate}
\item
For each $m\in (0,\infty)$, there is a constant $C=C(m,q,r,\theta,D)>0$ 
such that
\begin{equation}
\begin{split}
&\|T(t,s)f\|_r\leq C(t-s)^{-(3/q-3/r)/2}\|f\|_q, \\
&\|T(t,s)^*g\|_r\leq C(t-s)^{-(3/q-3/r)/2}\|g\|_q,
\end{split}
\label{LqLr-noch}
\end{equation}
for all
$t>s\geq 0$ and $f,\, g\in L^q_\sigma(D)$ whenever
\begin{equation}
|(\eta,\omega)|_0+|(\eta,\omega)|_\theta\leq m
\label{rigid-bound}
\end{equation}
is satisfied.

\item
Given $\tau_*\in (0,\infty)$ and $m\in (0,\infty)$,
let $\Lambda(\tau_*)$ be as in \eqref{para-set} and assume
\eqref{rigid-bound}.
Then there is a constant
$C=C(\tau_*,m,q,r,\theta,D)>0$
such that
\begin{equation}
\begin{split}
&\|\nabla T(t,s)f\|_r\leq C(t-s)^{-(3/q-3/r)/2-1/2}\|f\|_q,  \\
&\|\nabla T(t,s)^*g\|_r\leq C(t-s)^{-(3/q-3/r)/2-1/2}\|g\|_q,
\end{split}
\label{LqLr-grad-noch}
\end{equation}
for all $(t,s)\in\Lambda(\tau_*)$
and $f,\, g\in L^q_\sigma(D)$.

\end{enumerate}
\label{ann-18}
\end{proposition}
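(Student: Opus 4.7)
The plan is to treat the two assertions quite differently. Part 2 will follow essentially by time-translation from Proposition \ref{basic}, whereas Part 1 requires a substantial argument combining energy, cut-off, duality, and a differential inequality.

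For Part 2, Proposition \ref{basic} already provides \eqref{LqLr-grad} for $0\leq s<t\leq\tau_*$ with constant depending only on the local $C^\theta$-norm and $L^\infty$-norm of $(\eta,\omega)$ on $[0,\tau_*]$. Under the global bound \eqref{rigid-bound} these local norms are controlled uniformly under time-shifts $t\mapsto t+s_0$. Applying Proposition \ref{basic} to the translated coefficients $\eta(\cdot+s_0),\,\omega(\cdot+s_0)$, which generate $(t,s)\mapsto T(t+s_0,s+s_0)$ on the same exterior domain with identical Hölder and $L^\infty$ bounds, yields the first line of \eqref{LqLr-grad-noch} uniformly in $s_0\geq 0$. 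The estimate for the adjoint then follows by the same reasoning applied to the backward problem \eqref{backward}, whose coefficients enjoy identical bounds.

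For Part 1, the task is to upgrade the short-time estimate \eqref{LqLr} of Proposition \ref{basic} (whose constant depends a priori on $\mathcal T$) to one uniform for all $t-s\geq 0$. I would proceed in four steps. First, by the skew-symmetry of the drift and Coriolis terms, the energy identity
\begin{equation*}
\tfrac{1}{2}\tfrac{d}{dt}\|T(t,s)f\|_2^2+\|\nabla T(t,s)f\|_2^2=0
\end{equation*}
gives $\|T(t,s)f\|_2\leq\|f\|_2$ uniformly, together with the analogous identity for $T(t,s)^*$. Second, to obtain uniform boundedness in $L^r$ for $r\in(2,\infty)$, I would run a cut-off: fix $\varphi\in C^\infty(\mathbb R^3)$ equal to $0$ on $B_{R_0}$ and $1$ outside $B_{2R_0}$, and subtract the Bogovskii correction $\mathbb B_{D_{2R_0}}[T(t,s)f\cdot\nabla\varphi]$ so that $\varphi T(t,s)f$, suitably corrected, becomes a solenoidal field on all of $\mathbb R^3$ satisfying the whole-space analogue of \eqref{IVP} with forcing supported in the bounded annulus $D_{2R_0}$ and controlled in $L^2$ by the energy bound. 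The whole-space $L^q$-$L^r$ estimates combined with the Duhamel formula then yield a uniform $L^r$-bound for $r\in(2,\infty)$. Third, dualising via \eqref{duality} transfers this bound to $T(t,s)^*$ on $L^r$ for $r\in(2,\infty)$, and then back to $T(t,s)$ on $L^q$ for $q\in(1,2)$. Fourth, with uniform $L^r$-boundedness available for every $r\in(1,\infty)$, the decay \eqref{LqLr-noch} follows from a differential inequality obtained by splitting $T(t,s)=T(t,(t+s)/2)T((t+s)/2,s)$: the first factor is uniformly bounded on $L^r$, while the second, being a short-time evolution of length $(t-s)/2$, enjoys the smoothing rate \eqref{LqLr} from Proposition \ref{basic}, so composing them and optimising yields the global decay rate with the right exponent.

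The hard part is the cut-off step. The remainder carries a factor $\nabla\varphi$ which localises the forcing to $D_{2R_0}$, but a priori $T(t,s)f$ is controlled there only in $L^2$, whereas one wants an $L^r$-output on the far field. Closing this loop requires a delicate bootstrap between the whole-space smoothing of the corresponding evolution operator, the short-time smoothing \eqref{LqLr}--\eqref{LqLr-grad} of Proposition \ref{basic}, and the continuity properties \eqref{bog-est-1}--\eqref{bog-est-3} of the Bogovskii operator; moreover, the time integral of the source against the whole-space kernel must be shown to converge uniformly in $(t,s)$. This is precisely the circular dependence that resolvent analysis bypasses in the autonomous setting, but that route is unavailable here because of the genuine time-dependence of the coefficients, so an iteration/interpolation scheme driven by the energy identity must do the job.
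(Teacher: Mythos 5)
Your plan for Part 2 (the uniform constant on $\Lambda(\tau_*)$) is a valid and clean route: time-translate the coefficients by $s_0$, observe that the global bound \eqref{rigid-bound} controls the $\mathcal T$-local Hölder and $L^\infty$ norms of $\eta(\cdot+s_0),\omega(\cdot+s_0)$ uniformly in $s_0$, and apply Proposition \ref{basic} with $\mathcal T=\tau_*$ to the shifted evolution operator $T(\cdot+s_0,\cdot+s_0)$, which by uniqueness coincides with $T$ restricted to shifted arguments. The paper instead cites \cite[Proposition 3.1]{Hi18}, which reaches the same conclusion by reinspecting the parametrix construction and Tanabe theory to verify that the constants depend only on the data in \eqref{rigid-bound}; your argument presupposes the clean dependence $C=C(\mathcal T,m,\ldots)$ announced in Proposition \ref{basic}, which is exactly the content of that reinspection, so the two routes are essentially equivalent. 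For $\nabla T(t,s)^*$, invoking the same reasoning for $\widetilde T(\tau,s;t)$ via \eqref{back-op} is also what the paper does.

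Part 1 is where a genuine gap appears. Your Steps 1--3 (energy identity, cut-off against the whole-space flow, duality) match the structure of \cite{Hi18} as summarized in the introduction, and the uniform $L^r$-boundedness of $T(t,s)$ and $T(t,s)^*$ for $r\in(1,\infty)$ is indeed the crucial intermediate output. But Step 4 does not work as you describe. Writing $T(t,s)=T(t,(t+s)/2)T((t+s)/2,s)$, applying uniform boundedness on $L^r$ to the first factor and the ``short-time smoothing'' of Proposition \ref{basic} to the second is not legitimate, because the constant in \eqref{LqLr} supplied by Proposition \ref{basic} depends on the time horizon $\mathcal T$, and the second factor spans an interval of length $(t-s)/2\to\infty$. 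One obtains at best a bound like $C((t-s)/2)\cdot(t-s)^{-\gamma}$ with no control on the growth of $C(\cdot)$; nothing decays. Uniform $L^r$-boundedness plus short-time smoothing can never by themselves produce genuine polynomial decay as $t-s\to\infty$: any finite composition gives only a constant $L^q$-$L^r$ bound. The ``differential inequality'' of \cite[Lemmas 4.1, 4.2]{Hi18} is an actual ODE-type (Nash-style) argument: from $\frac{d}{dt}\|T(t,s)f\|_2^2=-2\|\nabla T(t,s)f\|_2^2$, the Gagliardo--Nirenberg inequality $\|u\|_2\le\|\nabla u\|_2^{a}\|u\|_q^{1-a}$, and the uniform $L^q$-boundedness of $T(t,s)$, one derives $\frac{d}{dt}\|T(t,s)f\|_2^2\le -c\|T(t,s)f\|_2^{2/a}\|f\|_q^{-2(1-a)/a}$, which integrates to the $L^q$-$L^2$ decay rate; duality yields the $L^2$-$L^r$ rate for $T^*$ and $T$, and the midpoint composition then gives the full $L^q$-$L^r$ decay. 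Your description of Step 4 omits this ODE mechanism and replaces it with a composition that cannot succeed.
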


The point of the second assertion is that the constant $C>0$ 
in \eqref{LqLr-grad-noch} can be taken uniformly in $(t,s)$
with $t-s\leq\tau_*$.
This must be the first step toward \eqref{LqLr-grad-noch} for all $t>s\geq 0$.
It was not covered by \cite{HR14} but shown by \cite[Proposition 3.1]{Hi18} 
under the condition \eqref{rigid},
however, only for $\nabla T(t,s)$.
The same result for $\nabla T(t,s)^*$ follows from the one for
$\nabla\widetilde T(\tau,s; t)$,
which is the solution operator to \eqref{ivp-adj} and can be
constructed along the procedure adopted by \cite{HR14},
see also Section \ref{regularity} of this paper.
To this end, as clarified in \cite[Subsections 3.1--3.3]{Hi18},
it suffices to investigate the initial value problem
for the same equation as in \eqref{ivp-adj} over a bounded domain $D_R$ 
with $R>0$ large enough by following the Tanabe-Sobolevskii theory \cite{T}.
Taking a look at
the generator $L_-(t-\tau)$ together with the condition \eqref{rigid},
we observe that
all the constants in several key estimates can be taken uniformly in 
$(\tau,s)$ with $\tau-s\leq\tau_*$,
see the proof of Lemma 3.2 of \cite{Hi18}, which implies
\[
\|\nabla \widetilde T(\tau,s; t)g\|_r\leq C(\tau-s)^{-(3/q-3/r)/2-1/2}\|g\|_q
\]
for all $(\tau,s)$ with $\tau-s\leq\tau_*$ as well as $0\leq s<\tau\leq t$
and $1<q\leq r<\infty$,
where $C>0$ depends on $\tau_*\in (0,t)$ but is independent of $t>0$.
By \eqref{back-op} and \eqref{duality}
we conclude that $\nabla T(t,s)^*$ also satisfies \eqref{LqLr-grad-noch}
for all $(t,s)\in\Lambda(\tau_*)$.

We are now in a position to present the main result of this paper.
\begin{theorem}
Suppose that $\eta$ and $\omega$ fulfill \eqref{rigid} for some 
$\theta\in (0,1)$.

\begin{enumerate}
\item
Let $1<q\leq r\leq 3$.
For each $m\in (0,\infty)$, there is a constant $C=C(m,q,r,\theta,D)>0$ 
such that \eqref{LqLr-grad-noch} holds for all
$t>s\geq 0$ and $f,\,g\in L^q_\sigma(D)$ whenever \eqref{rigid-bound}
is satisfied.

\item
Let $1<q\leq r$ as well as $r\in (3,\infty)$.
For each $m\in (0,\infty)$, there is a constant $C=C(m,q,r,\theta,D)>0$ 
such that
\begin{equation}
\begin{split}
&\|\nabla T(t,s)f\|_r\leq C(t-s)^{-3/2q}\|f\|_q,  \\
&\|\nabla T(t,s)^*g\|_r\leq C(t-s)^{-3/2q}\|g\|_q,
\end{split}
\label{grad-modi}
\end{equation}
for all $(t,s)$ with
\[
t-s>2 \quad\mbox{as well as $0\leq s<t$}
\]
and $f,\,g\in L^q_\sigma(D)$ whenever \eqref{rigid-bound} is satisfied.

\item
Let $1<q<\infty$.
For each $m\in (0,\infty)$, there is a constant $C=C(m,q,\theta,D)>0$ 
such that \eqref{LqLr-noch} with $r=\infty$ holds true, that is,
\begin{equation}
\begin{split}
&\|T(t,s)f\|_\infty\leq C(t-s)^{-3/2q}\|f\|_q,   \\
&\|T(t,s)^*g\|_\infty\leq C(t-s)^{-3/2q}\|g\|_q,
\end{split}
\label{L-inf}
\end{equation}
for all $t>s\geq 0$ and $f,\,g\in L^q_\sigma(D)$
whenever \eqref{rigid-bound} is satisfied.
\end{enumerate}
\label{main}
\end{theorem}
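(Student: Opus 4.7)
The plan is to reduce to initial data $f\in C_{0,\sigma}^\infty(D)$ by density and to estimate $u(t):=T(t,s)f$ by splitting it into a piece supported near the obstacle and a piece that effectively lives in the whole space. Fix $R>R_0+1$, take $\phi\in C_0^\infty(\mathbb R^3)$ with $\phi\equiv 1$ on $B_{R_0}$ and $\mathrm{supp}\,\phi\subset B_R$, and set
\[
\tilde u(t):=(1-\phi)\,u(t)+\mathbb B_A\bigl[u(t)\cdot\nabla\phi\bigr],
\]
where $\mathbb B_A$ is the Bogovskii operator on the annulus $A=B_R\setminus\overline{B_{R_0}}$ from \eqref{bogov}, with the correction extended by zero to $\mathbb R^3$. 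Then $\tilde u(t)\in L^q_\sigma(\mathbb R^3)$ and $\tilde u(t)\equiv u(t)$ on $\mathbb R^3\setminus B_R$, and a direct computation shows that $\tilde u$ solves, in $\mathbb R^3$,
\[
\partial_t\tilde u=\Delta\tilde u+(\eta(t)+\omega(t)\times x)\cdot\nabla\tilde u
-\omega(t)\times\tilde u-\nabla\tilde p+F(t),\qquad\mathrm{div}\,\tilde u=0,
\]
where the forcing $F(t)$ is supported in $A$ and is a linear combination of $u$, $\nabla u$, $\partial_tu$ and the pressure $p$ coupled with derivatives of $\phi$ or acted on by $\mathbb B_A$.

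Applying Duhamel's formula with the evolution operator of the corresponding whole-space problem, constructed in Section \ref{whole} and denoted by $U(t,s)$, one has
\[
\tilde u(t)=U(t,s)\tilde u(s)+\int_s^t U(t,\tau)P_{\mathbb R^3}F(\tau)\,d\tau,
\]
and the whole-space $L^q$-$L^r$ estimates of $\nabla U(t,s)$ reduce the matter to controlling $F(\tau)$ in a suitable Lebesgue norm over $A$. The contributions of $u$ and $\nabla u$ (produced by the commutators of $\phi$ with the Laplacian and with the drift, and by the time derivative of the Bogovskii correction) are absorbed by the local energy decay estimates of Propositions \ref{1st-LED} and \ref{2nd-LED}, which deliver the rate $(t-s)^{-3/(2q)}$ on $D_R$. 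The hardest step I expect is the pressure contribution $p\nabla\phi$: since $p$ is only recovered from the identity $\nabla p=\Delta u+(\eta+\omega\times x)\cdot\nabla u-\omega\times u-\partial_tu$, a decay estimate for $p$ in $L^q(D_R)$ (modulo constants) requires a decay estimate for $\partial_tT(t,s)f$ in $W^{-1,q}(D_R)$. This is precisely the refined temporal regularity established in Proposition \ref{weak}; feeding it back into the identity above gives $p$ with the same $(t-s)^{-3/(2q)}$ rate.

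With $F$ under control, the Duhamel integrand carries a whole-space kernel decay of $(t-\tau)^{-(3/q-3/r)/2-1/2}$ and a temporal factor of $(\tau-s)^{-3/(2q)}$, and an elementary computation of the resulting time integral shows that for $r\leq 3$ the kernel dominates, producing the optimal rate $(t-s)^{-(3/q-3/r)/2-1/2}$ of assertion~1, whereas for $r>3$ the integral saturates at $(t-s)^{-3/(2q)}$, giving assertion~2 for $t-s>2$; the short-time range is already supplied by Proposition \ref{ann-18}. The complementary bound on $D_R$ comes directly from the local energy decay of $\nabla T(t,s)$, and the parallel claims for $T(t,s)^*$ are obtained by applying the same argument to the backward problem \eqref{backward} and invoking \eqref{back-op} and \eqref{duality}. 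Finally, assertion~3 follows from the Morrey-type embedding $W^{1,r}(\mathbb R^3)\hookrightarrow L^\infty$ with $r>3$: writing $T(t,s)f=T(t,t-1)T(t-1,s)f$ for $t-s>2$ and combining the short-time smoothing of Proposition \ref{basic} with the gradient decay at time $t-1$ already obtained, or equivalently by a Gagliardo--Nirenberg interpolation between \eqref{LqLr-noch} and assertion~2, one arrives at the claimed $(t-s)^{-3/(2q)}$ decay of $\|T(t,s)f\|_\infty$.
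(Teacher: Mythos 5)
Your treatment of assertions~1 and~2 is essentially the paper's argument: the same cut-off
\[
\tilde u(t)=(1-\phi)T(t,s)f+\mathbb B\bigl[(T(t,s)f)\cdot\nabla\phi\bigr]
\]
and Duhamel representation with $U(t,\tau)$ are used, the delicate terms in the forcing are indeed $\mathbb B[(\partial_tT(t,s)f)\cdot\nabla\phi]$ and $(\nabla\phi)p$, and the crucial new ingredient is the $W^{-1,q}(D_R)$ estimate of $\partial_tT(t,s)f$ from Proposition~\ref{weak}, fed through the local energy decay Propositions~\ref{1st-LED}--\ref{2nd-LED} to produce Corollary~\ref{pressure}. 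Your description of the final time integral is slightly compressed (the paper first proves the $L^q\to L^q$ decay of $\nabla T(t,s)$ outside $B_R$ at rate $\min\{1/2,3/2q\}$, exploiting that $H$ is compactly supported and hence lies in $L^{r'}$ for every $r'\le q$, and then passes to $q<r$ by the semigroup property; you write as if the Duhamel integral directly yields the $L^q$--$L^r$ bound), but the plan is correct.

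Assertion~3 is where your proposal has a genuine gap. Neither of the two routes you mention produces the claimed rate $(t-s)^{-3/2q}$ globally on $D$. If you invoke the Gagliardo--Nirenberg inequality $\|u\|_\infty\le C\|\nabla u\|_r^{3/r}\|u\|_r^{1-3/r}$ with $r>3$, then with $\|\nabla T(t,s)f\|_r\lesssim(t-s)^{-3/2q}$ (assertion~2) and $\|T(t,s)f\|_r\lesssim(t-s)^{-(3/q-3/r)/2}$ (Proposition~\ref{ann-18}), the resulting exponent is
\[
-\frac{3}{2q}+\frac{3}{2r}\left(1-\frac{3}{r}\right),
\]
which is strictly larger than $-3/2q$ for every finite $r>3$. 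The obstruction is that $\|T(t,s)f\|_r$ on all of $D$ decays only at the rate $(t-s)^{-(3/q-3/r)/2}$, which cannot be improved, and this zero-order factor is the bottleneck. Splitting $T(t,s)=T(t,t-1)T(t-1,s)$ and using Morrey for the short-time factor runs into the same problem: $\|T(t,t-1)h\|_\infty\le C\|h\|_r$ forces you back to $\|T(t-1,s)f\|_r\lesssim(t-s)^{-(3/q-3/r)/2}\|f\|_q$. The reason the embedding argument nevertheless works over the bounded region $D_R$ (this is precisely Corollary~\ref{inf-LED}) is that the local energy decay of Proposition~\ref{2nd-LED} bounds the \emph{full} $W^{1,q}(D_R)$ norm, zero-order term included, by $(t-s)^{-3/2q}\|f\|_q$; that stronger decay of $\|T(t,s)f\|_{q,D_R}$ is what makes the embedding sharp there, and it is not available globally. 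For the part near infinity one has to go through the Duhamel formula \eqref{duha-whole} once more, this time with the genuine $L^{r'}$--$L^\infty$ estimates of the whole-space operator $U(t,\tau)$ (which, unlike $T$, is essentially a translated heat semigroup and hence has the sharp $L^{r'}\to L^\infty$ decay $(t-\tau)^{-3/2r'}$). That argument needs the integrability condition $3/2r'<1$ with $r'\le q$, i.e.\ $q>3/2$; the range $q\le 3/2$ is then recovered by the semigroup property and \eqref{LqLr-noch}.
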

\begin{remark}
Maremonti and Solonnikov \cite{MSol} first pointed out that the restriction
$1<q\leq r\leq 3=n \;(\mbox{space dimension})$
for the desired rate \eqref{LqLr-grad-noch} of decay
is optimal when $\eta=\omega=0$.
Later on, in this case of the Stokes semigroup, the present author \cite{Hi11}
gave another proof of the optimality, where a key observation is that
the issue is closely related to summability of
the steady Stokes flow near spatial infinity.
From this point of view, it is also conjectured by 
\cite[Section 5]{Hi11} that the desired rate \eqref{LqLr-grad} of decay
could be obtained for $1<q\leq r\leq 6=n(n+1)/(n-1)$
when the translation of the body is present, that is, $\eta\neq 0$.
For the Stokes semigroup,
the optimality of the rate \eqref{grad-modi} of decay was also proved by
Maremonti and Solonnikov \cite{MSol}
in the sense that better rate $(t-s)^{-3/2q-\varepsilon}$
with some $\varepsilon >0$
is impossible when $r>3$.
\label{optimal}
\end{remark}

Having several applications to the Navier-Stokes system in mind,
we next provide useful estimates especially for the adjoint evolution operator.
Let us introduce the Lorentz spaces which are usually defined
as Banach spaces in terms of the average function of the rearrangement,
see \cite{BL} for details.
For simplicity, we just define the solenoidal Lorentz spaces by
\[
L^{q,\rho}_\sigma(D)=
\left(L^{q_0}_\sigma(D), L^{q_1}_\sigma(D) \right)_{\theta,\rho}
\]
with
\[
1<q_0<q<q_1<\infty, \quad
\frac{1}{q}=\frac{1-\theta}{q_0}+\frac{\theta}{q_1}, \quad
1\leq\rho\leq\infty,
\]
where $(\cdot,\cdot)_{\theta,\rho}$ denotes the real interpolation functor.
Then the RHS above is independent of choice of $\{q_0,q_1\}$, so that the
space $L^{q,\rho}_\sigma(D)$, whose norm is denoted by $\|\cdot\|_{q,\rho}$,
is well-defined.
It is obvious by interpolation to obtain
\eqref{LqLr-noch} and \eqref{LqLr-grad-noch}
for all $t>s\geq 0$ in which the Lebesgue spaces are replaced by
the Lorentz spaces
except for \eqref{LqLr-grad-noch} with $1<q\leq r=3$.
But we do need this end-point case for the adjoint
evolution operator to study the large time behavior of
the Navier-Stokes flow around a background flow
(such as steady flow and time-periodic one)
that decays with scale-critical rate at spatial infinity, 
see \cite{HiS}, \cite{Y}.
For completeness,
it is worse while providing \eqref{adj-Lorentz} below 
including the nontrivial case $r=3$.
Once we have \eqref{adj-Lorentz}, we can get \eqref{adj-int} by following
the argument developed by Yamazaki \cite{Y}.
\begin{theorem}
Let $1<q\leq r\leq 3$ and $1\leq\rho <\infty$.
Let $m\in (0,\infty)$ and assume \eqref{rigid-bound}.
Then there is a constant
$C=C(m,q,r,\rho,\theta,D)>0$ such that
\begin{equation}
\|\nabla T(t,s)^*g\|_{r,\rho}
\leq C(t-s)^{-(3/q-3/r)/2-1/2}\|g\|_{q,\rho}
\label{adj-Lorentz}
\end{equation}
for all $t>s\geq 0$ and $g\in L^{q,\rho}_\sigma(D)$.
If in particular $1/q-1/r=1/3$ as well as $1<q<r\leq 3$,
then there is a constant $C=C(m,q,\theta,D)>0$ such that
\begin{equation}
\int_0^t\|\nabla T(t,s)^*g\|_{r,1}\,ds
\leq C\|g\|_{q,1}
\label{adj-int}
\end{equation}
for all $t>0$ and $g\in L^{q,1}_\sigma(D)$.
\label{adj}
\end{theorem}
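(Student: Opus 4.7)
The plan is to prove \eqref{adj-Lorentz} first and then derive \eqref{adj-int} by following Yamazaki \cite{Y}. For \eqref{adj-Lorentz} with $1<q\le r<3$, real interpolation (K-method) applied to the Lebesgue gradient estimates of Theorem \ref{main}(1) is sufficient: pick two pairs $(q_i,r_i)$, $i=0,1$, with $1<q_i\le r_i<3$ whose reciprocal-plane line passes through $(1/q,1/r)$ strictly between them, and interpolate the operator norms $M_i=C(t-s)^{-(3/q_i-3/r_i)/2-1/2}$ to obtain the Lorentz version with the correct rate.

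The nontrivial case is the endpoint $r=3$. Here the above interpolation cannot be used as stated, because reaching $1/r=1/3$ strictly between $1/r_0$ and $1/r_1$ forces $r_1>3$, and the gradient estimate there (Theorem \ref{main}(2)) decays only like $(t-s)^{-3/(2q_1)}$, which does not lie on the same line as $(3/q_0-3/r_0)/2+1/2$ in the decay-exponent plane, so the interpolated bound undershoots for large $t-s$. I would handle this by splitting time. For short times $0<t-s\le\tau_*$, Proposition \ref{ann-18}(2) is available for every $r\in(q,\infty)$ with the natural rate, and real interpolation with $r_0<3<r_1$ both finite yields the Lorentz estimate at $r=3$ on $[0,\tau_*]$ with constant depending only on $\tau_*$. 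For long times $t-s>\tau_*$, I would use the semigroup decomposition
\[
\nabla T(t,s)^* g=\nabla T(t,\tau)^*\bigl[T(\tau,s)^* g\bigr],\qquad \tau=(t+s)/2,
\]
applying on the first factor the short-time Lorentz gradient bound at an intermediate exponent $\tilde q\in[q,3]$, and on the second factor the elementary Lorentz interpolation of \eqref{LqLr-noch}. By the algebraic identity $3/(2\tilde q)+(3/q-3/\tilde q)/2=3/(2q)$, the two rates telescope into the desired $(t-s)^{-3/(2q)}$, since both pieces have length $(t-s)/2$. When $(t-s)/2$ itself exceeds $\tau_*$ one iterates the splitting.

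The principal obstacle in the endpoint case is keeping the constant uniform across iterations, since naive iteration of the bound introduces multiplicative factors that could grow polynomially in $t-s$. I would address this by combining the $L^\infty$-estimate \eqref{L-inf} of Theorem \ref{main}(3) with the Lorentz interpolation of \eqref{LqLr-noch} to let the intermediate exponent $\tilde q$ vary with $t-s$, so that the per-iteration loss in the decay exponent is absorbed and the final constant is finite.

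For \eqref{adj-int}, with \eqref{adj-Lorentz} established, the argument is that of Yamazaki \cite{Y}: the pointwise estimate specialized to $1/q-1/r=1/3$ and $\rho=1$ reads $\|\nabla T(t,s)^* g\|_{r,1}\le C(t-s)^{-1}\|g\|_{q,1}$, which is not integrable in $s$ at either endpoint, but the bilinear form $B(g,\varphi)=\int_0^t\langle\nabla T(t,s)^* g,\varphi(s)\rangle_D\,ds$ can be shown bounded on $L^{q,1}_\sigma(D)\times L^\infty(0,t;L^{r',\infty}(D))$ by real interpolation along the one-parameter family of estimates \eqref{adj-Lorentz} with $r$ varying in $(q,3]$. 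Taking the supremum over $\varphi$ with $\|\varphi(s)\|_{r',\infty}\le 1$ then yields \eqref{adj-int}.
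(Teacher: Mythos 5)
Your treatment of $1<q\le r<3$ by real interpolation of the Lebesgue gradient estimates of Theorem \ref{main}(1) is correct and matches the paper's remark that this case is "obvious by interpolation." The Yamazaki-type argument you sketch for \eqref{adj-int} is also essentially the paper's route, once \eqref{adj-Lorentz} is in hand. The problem is the endpoint $r=3$, which is precisely the case the theorem was stated to cover, and there your proposal has a genuine gap.

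Your plan for $r=3$ is a semigroup splitting at the midpoint $\tau=(t+s)/2$, with the short-time gradient bound on the factor $\nabla T(t,\tau)^*$. But $t-\tau=(t-s)/2$ is short only when $t-s\le 2\tau_*$; for larger $t-s$ you must re-apply the same splitting to the first factor, which is a dyadic recursion. Each level multiplies the constant by a factor $>1$, so after $k\sim\log_2\bigl((t-s)/\tau_*\bigr)$ levels the constant has grown like $(t-s)^c$ for some $c>0$, and the decay rate is destroyed. You acknowledge this and propose to "let the intermediate exponent $\tilde q$ vary with $t-s$" to absorb the loss, but no mechanism is given and I do not see one: if instead you fix the short-time gap ($\tau=t-1$, say), the bound you get from $\|\nabla T(t,\tau)^*h\|_{3,\rho}\le C\|h\|_{\tilde q,\rho}$ followed by the $L^{q,\rho}$-$L^{\tilde q,\rho}$ estimate on $T(\tau,s)^*$ decays only like $(t-s)^{-(3/q-3/\tilde q)/2}$, which is short of $(t-s)^{-3/(2q)}$ by $(t-s)^{3/(2\tilde q)}$ for every finite $\tilde q$. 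So neither the iterated-midpoint nor the fixed-gap version of the splitting produces the endpoint rate, and the fix is not merely a matter of bookkeeping.

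The paper avoids this obstruction by interpolating at a different level. Rather than trying to interpolate the global gradient estimates themselves (which are unavailable or too weak beyond $r=3$), it interpolates the \emph{local} energy decay estimates \eqref{2nd-led} of Proposition \ref{2nd-LED} and the pressure/Bogovskii estimate \eqref{pressure-est} of Corollary \ref{pressure} for the adjoint, obtaining \eqref{loc-interpo-1}--\eqref{loc-interpo-2}. These local estimates hold for every $q\in(1,\infty)$ with no restriction at $q=3$, so the interpolation step is unproblematic. It then re-runs the near-infinity cut-off argument of Section \ref{proof} (the $u(s)$ of \eqref{cut-extadj} and the Duhamel representation against $U(t,s)^*$) with all Lebesgue norms replaced by Lorentz norms, using $L^{q,\rho}$-$L^{r,\rho}$ bounds for $\nabla U(t,s)^*$, the Bogovskii operator on $A_R$, and the freshly interpolated local estimates. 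This is the structural idea you are missing: the endpoint Lorentz gradient estimate cannot be reached by interpolating the finished product, but it can be reached by interpolating the ingredients and re-assembling.
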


\section{Whole space problem}
\label{whole}

In this section we consider the non-autonomous system
\begin{equation}
\begin{split}
&\partial_tu
=\Delta u+(\eta(t)+\omega(t)\times x)\cdot\nabla u
-\omega(t)\times u-\nabla p,  \\
&\mbox{div $u$}=0
\end{split}
\label{eqn-wh}
\end{equation}
in $\mathbb R^3\times (s,\infty)$ subject to
\begin{equation}
\begin{split}
&u\to 0\quad\mbox{as $|x|\to\infty$}, \\
&u(\cdot,s)=f.
\end{split}
\label{side-wh}
\end{equation}
Indeed the system was studied by \cite{CMi}, \cite{GHa}, \cite{Ha},
\cite{HR11} and \cite{HR14}, but
we have to supplement a couple of regularity properties:
Lemma \ref{whole-basic} on some smoothing actions
and Lemma \ref{weak-whole} on the time derivative for general 
$f\in L^q_\sigma(\mathbb R^3)$.

As long as $f$ fulfills the compatibility condition
$\mbox{div $f$}=0$,
we see that $\nabla p=0$ within the class $\nabla p\in L^q(\mathbb R^3)$ and 
that the solution is just the heat semigroup in which a change of variables
is made in an appropriate way, because
\begin{equation}
\mbox{div $[(\eta+\omega\times x)\cdot\nabla u-\omega\times u]$}
=(\eta+\omega\times x)\cdot\nabla\mbox{div $u$}=0.
\label{dri-sole}
\end{equation}
In fact, 
the solution to \eqref{eqn-wh}--\eqref{side-wh} is explicitly described as
\begin{equation}
\begin{split}
u(x,t)
&=\big(U(t,s)f\big)(x) \\
&=\Phi(t,s)\left(e^{(t-s)\Delta} f\right)\left(
\Phi(t,s)^\top\left(x+\int_s^t\Phi(t,\tau)\eta(\tau)\,d\tau\right)\right)  \\
\end{split}
\label{evo-wh}
\end{equation}
where
\[
\big(e^{t\Delta}f\big)(x)=(4\pi t)^{-3/2}\left(e^{-|\cdot|^2/4t}*f\right)(x),
\]
while $3\times 3$ orthogonal matrix $\Phi(t,s)$ stands for
the evolution operator for the ordinary differential equation
$\frac{d}{dt}\varphi=-\omega\times\varphi$, 
see the literature above for details.
By $\Gamma(x,y;t,s)$ we denote the fundamental solution, thst is,
the kernal matrix of \eqref{evo-wh}:
\[
u(x,t)=\int_{\mathbb R^3}\Gamma(x,y;t,s)f(y)\,dy.
\]
Then the adjoint of $U(t,s)$ is given by
\begin{equation}
\big(U(t,s)^*g\big)(y)
=\int_{\mathbb R^3}\Gamma(x,y;t,s)^\top g(x)\,dx.
\label{backevo-wh}
\end{equation}
Given $t>0$ (final time) and a suitable solenoidal vector field
$g$ (final data), the velocity $v(s)=U(t,s)^*g$ together with
the trivial pressure gradient $\nabla\sigma=0$ formally
(even rigorously for fine $g$, see \cite[third assertion of Lemma 3.1]{Hi18})
solves the backward system
\begin{equation}
\begin{split}
&-\partial_sv
=\Delta v-(\eta(s)+\omega(s)\times y)\cdot\nabla v
+\omega(s)\times v+\nabla\sigma,  \\
&\mbox{div $v$}=0,
\end{split}
\label{back-wh}
\end{equation}
in $\mathbb R^3\times [0,t)$ subject to
\begin{equation}
\begin{split}
&v\to 0\quad\mbox{as $|y|\to\infty$}, \\
&v(\cdot,t)=g.
\end{split}
\label{backside-wh}
\end{equation}
The initial value problem corresponding to \eqref{ivp-adj} is given  by
\begin{equation}
\begin{split}
\partial_\tau w
&=\Delta w-(\eta(t-\tau)+\omega(t-\tau)\times y)\cdot\nabla w
+\omega(t-\tau)\times w+\nabla p_w, \\
\mbox{div $w$}&=0, \\
w&\to 0 \quad\mbox{as $|y|\to\infty$}, \\
w(\cdot,s)&=g,
\end{split}
\label{ivp-adj-wh}
\end{equation}
in $\mathbb R^3\times (s,t]$
(with $\nabla p_w=0$ under the compatibility condition $\mbox{div $g$}=0$),
where $t>0$ is just a parameter.
The solution to \eqref{ivp-adj-wh} is described as
\begin{equation}
\begin{split}
w(y,\tau)
&=\big(\widetilde U(\tau,s; t)g\big)(y)  \\
&=\Phi(t-\tau,t-s)\big(e^{(\tau-s)\Delta}g\big)(\cdots)
\end{split}
\label{evo-adj-wh}
\end{equation}
with
\[
(\cdots)=
\Phi(t-\tau,t-s)^\top
\left(y-\int_s^\tau\Phi(t-\tau,t-\sigma)\eta(t-\sigma)\,d\sigma\right)
\]
where the orthogonal matrix 
$\Phi(\cdot,\cdot)$ is the same as in \eqref{evo-wh}.
It is verified that the relation
\[
U(t,s)^*=\widetilde U(t-s,0; t), \qquad t\geq s\geq 0,
\]
recovers \eqref{backevo-wh} as in \eqref{back-op}.

Although we will provide the results 
(Lemma \ref{whole-basic}, Lemma \ref{weak-whole})
only on the evolution operator $U(t,s)$, 
those for the adjoint $U(t,s)^*$ or $\widetilde U(\tau,s; t)$
are also available and will be needed
to obtain the assertions for the adjoint $T(t,s)^*$.

Let $1<q<\infty$.
Correspondingly to the auxilliary spaces 
\eqref{auxi} for the exterior problem, let us introduce
\begin{equation*}
\begin{split}
&Z_q(\mathbb R^3)
=\{u\in L^q_\sigma(\mathbb R^3)\cap W^{1,q}(\mathbb R^3);
|x|\nabla u\in L^q(\mathbb R^3)\}, \\
&Y_q(\mathbb R^3)
=Z_q(\mathbb R^3)\cap W^{2,q}(\mathbb R^3),
\end{split}
\end{equation*}
to describe the regularity of the solution.
We note that, under the condition \eqref{loc-hoelder} solely,
the regularity deduced in the following lemma holds true 
subject to estimates \eqref{est-YZ-whole}--\eqref{est-Z-whole} below
for $0\leq s<t\leq {\cal T}$ with $C>0$ 
that depends on ${\cal T}\in (0,\infty)$.
Nevertheless, for later use, 
we will show those estimates for $(t,s)\in\Lambda(\tau_*)$,
see \eqref{para-set},
under the additional assumption $\eta\in L^\infty(0,\infty; \mathbb R^3)$ 
(even under \eqref{rigid}).
\begin{lemma}
Suppose that $\eta$ and $\omega$ fulfill \eqref{loc-hoelder}
for some $\theta\in (0,1)$.
Assume in addition that
$\eta\in L^\infty(0,\infty; \mathbb R^3)$ for the second, third and fourth
assertions below.
Let $1<q<\infty$.
Then $\{U(t,s)\}_{t\geq s\geq 0}$ given by \eqref{evo-wh} 
defines an evolution operator on $L^q(\mathbb R^3)$ and on
$L^q_\sigma(\mathbb R^3)$.
Furthermore, we have the following properties.

\begin{enumerate}
\item
Let $q\leq r\leq\infty$.
For every integer $j\geq 0$, there is a constant $c_j=c_j(q,r)>0$, 
independent of $\eta$ and $\omega$, such that
\begin{equation}
\begin{split}
&\nabla^jU(\cdot,s)f\in C((s,\infty); L^r(\mathbb R^3)), \\
&\|\nabla^jU(t,s)f\|_{r,\mathbb R^3}
\leq c_j(t-s)^{-(3/q-3/r)/2-j/2}\|f\|_{q,\mathbb R^3}
\end{split}
\label{LqLr-whole}
\end{equation}
for all $t>s\geq 0$ and $f\in L^q(\mathbb R^3)$.

\item
Let $q\leq r<\infty$ and $m\in (0,\infty)$.
For every $f\in Z_q(\mathbb R^3)$ and $t\in (s,\infty)$,
we have $|x|\nabla U(t,s)f\in L^r(\mathbb R^3)$ subject to
\begin{equation}
\begin{split}
&\quad \||x|\nabla U(t,s)f\|_{r,\mathbb R^3} \\ 
&\leq C(t-s)^{-(3/q-3/r)/2}\||x|\nabla f\|_{q,\mathbb R^3} \\
&\quad +C(t-s)^{-(3/q-3/r)/2+1/2}\{1+m(t-s)^{1/2}\}\|\nabla f\|_{q,\mathbb R^3}
\end{split}
\label{drift-whole}
\end{equation}
for all $t>s\geq 0$ with some constant
$C=C(q,r)>0$, whenever $|\eta|_0:=\sup_{t\geq 0}|\eta(t)|\leq m$.
 
\item
For every $f\in Z_q(\mathbb R^3)$ and $t\in (s,\infty)$, we have 
$U(t,s)f\in Y_q(\mathbb R^3)$ and
\[
u:=U(\cdot,s)f\in C^1((s,\infty); L^q_\sigma(\mathbb R^3))
\]
with \eqref{eqn-wh}--\eqref{side-wh} in $L^q_\sigma(\mathbb R^3)$.
Let $\tau_*\in (0,\infty)$ and $m\in (0,\infty)$.
If in addition \eqref{rigid} is assumed, then
there is a constant $C=C(\tau_*,m,q)>0$ such that
\begin{equation}
\|U(t,s)f\|_{Y_q(\mathbb R^3)}+\|\partial_tU(t,s)f\|_{q,\mathbb R^3}
\leq C(t-s)^{-1/2}\|f\|_{Z_q(\mathbb R^3)}
\label{est-YZ-whole}
\end{equation}
for all $(t,s)\in\Lambda(\tau_*)$ and $f\in Z_q(\mathbb R^3)$
whenever \eqref{rigid-bound} is satisfied,
where $\Lambda(\tau_*)$ is given by \eqref{para-set}.

\item
Let $q\leq r<\infty$, $\tau_*\in (0,\infty)$ and $m\in (0,\infty)$.
For every $f\in Z_q(\mathbb R^3)$ and $t\in (s,\infty)$, 
we have $U(t,s)f\in Z_r(\mathbb R^3)$ subject to
\begin{equation}
\|U(t,s)f\|_{Z_r(\mathbb R^3)}
\leq C(t-s)^{-(3/q-3/r)/2}\|f\|_{Z_q(\mathbb R^3)}
\label{est-Z-whole}
\end{equation}
for all $(t,s)\in \Lambda(\tau_*)$ 
with some constant $C=C(\tau_*,m,q,r)>0$
whenever $|\eta|_0\leq m$.
\end{enumerate}
\label{whole-basic}
\end{lemma}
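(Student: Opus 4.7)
The plan is to exploit the explicit representation \eqref{evo-wh} and reduce everything to the heat semigroup. Set $h(y,\tau):=(e^{\tau\Delta}f)(y)$ and $\xi(t,s):=\int_s^t\Phi(t,\tau)\eta(\tau)\,d\tau$, so that $u(x,t)=\Phi(t,s)\,h(y(x),t-s)$ with $y(x)=\Phi(t,s)^\top(x+\xi(t,s))$. Because $\Phi(t,s)$ is orthogonal and the map $x\mapsto y(x)$ has unit Jacobian, a direct chain rule computation gives $(\nabla_x u)_{ij}=\Phi_{ik}\Phi_{jl}\,\partial_{y_l}h_k$, hence $|\nabla_x^j u(x,t)|=|\nabla_y^j h(y,t-s)|$ pointwise for every $j\geq 0$. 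Consequently
\[
\|\nabla^jU(t,s)f\|_{r,\mathbb R^3}=\|\nabla^j e^{(t-s)\Delta}f\|_{r,\mathbb R^3},
\]
and the first assertion follows from the standard $L^q$--$L^r$ heat-kernel estimate (with $r=\infty$ included). Continuity in $t$ on $(s,\infty)$ is inherited from continuity of $\Phi(t,s)$, $\xi(t,s)$ and strong continuity of the heat semigroup on $L^r$ applied to $e^{\epsilon\Delta}f\in L^r$ for a small $\epsilon>0$.

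For the second assertion, under $|\eta|_0\leq m$ one has $|\xi(t,s)|\leq m(t-s)$, and from $x=\Phi(t,s)y-\xi(t,s)$ the triangle inequality yields $|x|\leq|y|+m(t-s)$. Therefore, after the change of variables with Jacobian one,
\[
\||x|\nabla U(t,s)f\|_{r,\mathbb R^3}\leq \||y|\nabla e^{(t-s)\Delta}f\|_{r,\mathbb R^3}+m(t-s)\|\nabla e^{(t-s)\Delta}f\|_{r,\mathbb R^3}.
\]
Writing $\nabla e^{(t-s)\Delta}f=K(\cdot,t-s)*\nabla f$ with heat kernel $K$ and splitting $|y|\leq|y-z|+|z|$ inside the convolution, Young's inequality with $1+1/r=1/s+1/q$ and $\||\cdot|K(\cdot,\tau)\|_s=C\tau^{1/2-3(1-1/s)/2}$ produces the two summands on the right-hand side of \eqref{drift-whole}. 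The remaining $m(t-s)\|\nabla e^{(t-s)\Delta}f\|_r$ contribution, bounded by $Cm(t-s)^{1/2}\cdot(t-s)^{1/2-(3/q-3/r)/2}\|\nabla f\|_q$, is absorbed into the factor $(1+m(t-s)^{1/2})$.

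For the third and fourth assertions, the pointwise identities above give $\|\nabla^2U(t,s)f\|_{q,\mathbb R^3}\leq C(t-s)^{-1/2}\|\nabla f\|_{q,\mathbb R^3}$ and $\|\nabla U(t,s)f\|_{r,\mathbb R^3}\leq C(t-s)^{-(3/q-3/r)/2}\|\nabla f\|_{q,\mathbb R^3}$, while the second assertion at level $r=q$ yields $\||x|\nabla U(t,s)f\|_{q,\mathbb R^3}\leq C\|f\|_{Z_q(\mathbb R^3)}$ for $t-s\leq\tau_*$; together these place $U(t,s)f$ in $Y_q(\mathbb R^3)$ with the announced $(t-s)^{-1/2}$ bound. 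The $C^{1,\theta}$-regularity of $\Phi(t,s)$ and $\xi(t,s)$ in $t$ (from $\eta,\omega\in C^\theta$) and the $C^1$-regularity of $e^{(t-s)\Delta}f$ in $L^q$ when $\nabla f\in L^q$ deliver $U(\cdot,s)f\in C^1((s,\infty);L^q_\sigma(\mathbb R^3))$ and the equation \eqref{eqn-wh}--\eqref{side-wh} in $L^q_\sigma(\mathbb R^3)$, with $\nabla p=0$ on account of \eqref{dri-sole}. Bounding $\partial_tu$ via \eqref{eqn-wh} by $\|\Delta u\|_q+|\eta|_0\|\nabla u\|_q+|\omega|_0\||x|\nabla u\|_q+|\omega|_0\|u\|_q$ and using \eqref{rigid-bound} closes \eqref{est-YZ-whole}; assertion 4 follows by adding the three bounds on $\|U(t,s)f\|_r$, $\|\nabla U(t,s)f\|_r$ and $\||x|\nabla U(t,s)f\|_r$, absorbing the factor $(t-s)^{1/2}(1+m(t-s)^{1/2})\leq C$ for $t-s\leq\tau_*$.

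I expect the main technical point to be the weighted estimate in the second assertion; once the shift by $\xi(t,s)$ has been absorbed into a heat-kernel convolution through $|y|\leq|y-z|+|z|$, the rest is careful book-keeping of powers of $(t-s)$. No idea beyond the orthogonality of $\Phi$ and the explicit representation \eqref{evo-wh} is required.
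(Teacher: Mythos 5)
Your proposal is correct and proceeds along essentially the same lines as the paper's proof: the first assertion reduces to the heat semigroup via the orthogonal, measure-preserving change of variables; the weighted estimate in the second assertion uses the triangle inequality to peel $|x|$ into a weighted heat kernel plus a weight on $\nabla f$, handled by Young's inequality, plus the shift bound $|\int_s^t\Phi(t,\tau)\eta(\tau)\,d\tau|\leq m(t-s)$; and the third and fourth assertions follow by combining the above and using the equation \eqref{eqn-wh} to bound $\partial_t U(t,s)f$. The only cosmetic difference is in the second assertion, where the paper splits $|x|\leq|x-y|+|y|$ directly in the $x$-space convolution and then changes variables to absorb the shift, whereas you first change variables ($|x|\leq|y|+m(t-s)$) and then split $|y|\leq|y-z|+|z|$ in the heat-kernel convolution; these are the same computation organized in the opposite order.
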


\begin{proof}
The first assertion follows from the corresponding properties 
of the heat semigroup.
The third assertion is a slight improvement of the one 
in \cite{HR11} and \cite{HR14}, but it follows from knowledge
obtained there (see Proposition 3.1 (a) of \cite{HR14}).
The second and fourth assertions for the case $r>q$ are new 
and preparations for Lemma \ref{smoothing-Z}.

As in the proof of \eqref{drift-whole} with $r=q$ by \cite{HR11},
we have
\begin{equation*}
\begin{split}
&\quad |x||\nabla (U(t,s)f)(x)|  \\
&\leq\int_{\mathbb R^3}
(|x-y|+|y|)
\frac{e^{-|x-y|^2/4(t-s)}}{\{4\pi(t-s)\}^{3/2}}
\left|(\nabla f)\left(\Phi(t,s)^\top(y+h_{t,s})\right)\right|\,dy  \\
&=:I+J,
\end{split}
\end{equation*}
where
$h_{t,s}:=\int_s^t \Phi(t,\tau)\eta(\tau)\,d\tau$.
We then find that
\[
\|I\|_{r,\mathbb R^3}\leq C(t-s)^{-(3/q-3/r)/2+1/2}\|\nabla f\|_{q,\mathbb R^3}
\]
and that
\begin{equation*}
\begin{split}
\|J\|_{r,\mathbb R^3}
&\leq C(t-s)^{-(3/q-3/r)/2}\left\|
|\cdot|(\nabla f)\left(\Phi(t,s)^\top(\,\cdot+h_{t,s})\right)
\right\|_{q,\mathbb R^3}  \\
&\leq C(t-s)^{-(3/q-3/r)/2}\big\{
\||\cdot|\nabla f\|_{q,\mathbb R^3}+|\eta|_0(t-s)\|\nabla f\|_{q,\mathbb R^3}
\big\}.
\end{split}
\end{equation*}
They thus imply \eqref{drift-whole}.
It is easily seen that
\[
\|\nabla^{j+1}U(t,s)f\|_{r,\mathbb R^3}
\leq C(t-s)^{-(3/q-3/r)/2-j/2}\|\nabla f\|_{q,\mathbb R^3}
\]
for all $t>s\geq 0$, $1<q\leq r<\infty$ and $j=0,1$, 
which together with \eqref{LqLr-whole}--\eqref{drift-whole}
(and by using the equation \eqref{eqn-wh} for $\partial_tU(t,s)f$)
leads to \eqref{est-YZ-whole} as well as \eqref{est-Z-whole}.
The proof is complete.
\end{proof}

It is natural to expect that $U(t,s)f$ is a weak solution in a sense 
together with a reasonable estimate of $\partial_tU(t,s)f$
even if $f\in L^q_\sigma(\mathbb R^3)$
rather than $f\in Z_q(\mathbb R^3)$.
The following lemma gives an affirmative answer.
Indeed the assumption \eqref{loc-hoelder} is enough to obtain the assertion,
but the constant in 
\eqref{weak-est-whole} below depends on ${\cal T}\in (0,\infty)$ 
for $0\leq s<t\leq{\cal T}$.
For later use, 
it is convenient to show the following form when assuming \eqref{rigid}.
\begin{lemma}
Suppose that $\eta$ and $\omega$ fulfill \eqref{rigid} for some 
$\theta\in (0,1)$.
Let $1<q<\infty$ and $R>0$.
Given $f\in L^q_\sigma(\mathbb R^3)$ and $s\geq 0$, we set $u(t)=U(t,s)f$.
For each $\tau_*\in (0,\infty)$ and $m\in (0,\infty)$, there is a constant
$C=C(\tau_*,m,q,R)>0$ such that
\begin{equation}
u\in C^1((s,\infty); W^{-1,q}(B_R)),
\label{weak-deri-wh}
\end{equation}
\begin{equation}
\|\partial_tU(t,s)f\|_{W^{-1,q}(B_R)}\leq C(t-s)^{-1/2}\|f\|_{q,\mathbb R^3}
\label{weak-est-whole}
\end{equation}
for all $(t,s)\in\Lambda(\tau_*)$
and $f\in L^q_\sigma(\mathbb R^3)$ whenever \eqref{rigid-bound} is satisfied,
where $\Lambda(\tau_*)$ is given by \eqref{para-set}.
Furthermore, we have
\begin{equation}
\langle \partial_t u,\psi\rangle_{B_R}
+\langle\nabla u+u\otimes (\eta+\omega\times x)
-(\omega\times x)\otimes u, \nabla\psi\rangle_{B_R}=0
\label{weak-eqn-wh}
\end{equation}
for all $t\in (s,\infty)$ and $\psi\in W_0^{1,q^\prime}(B_R)^3$, where
$1/q^\prime+1/q=1$.
\label{weak-whole}
\end{lemma}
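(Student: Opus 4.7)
The plan is to obtain the weak form \eqref{weak-eqn-wh} by a density argument starting from smooth divergence-free initial data, and then to read off the $W^{-1,q}(B_R)$ estimate directly from the divergence form of the right-hand side. First I would pick $f_n\in C_{0,\sigma}^\infty(\mathbb R^3)\subset Z_q(\mathbb R^3)$ with $f_n\to f$ in $L^q_\sigma(\mathbb R^3)$. By the third assertion of Lemma \ref{whole-basic}, $u_n(t):=U(t,s)f_n$ is a strong solution of \eqref{eqn-wh}--\eqref{side-wh} in $L^q_\sigma(\mathbb R^3)$ with $\nabla p\equiv 0$, since the drift-Coriolis combination is divergence-free on a solenoidal field (cf.\ \eqref{dri-sole}). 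Using the identities recorded at the beginning of Section \ref{result}, I recast the equation in divergence form as
\[
\partial_t u_n=\mathrm{div}\,[\nabla u_n+u_n\otimes(\eta+\omega\times x)-(\omega\times x)\otimes u_n],
\]
where the Coriolis term acquires this form precisely because $\mathrm{div}\,u_n=0$.

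Next I would test against $\psi\in W_0^{1,q^\prime}(B_R)^3$, integrate by parts on $B_R$ (the boundary contribution vanishes because $\psi|_{\partial B_R}=0$), and integrate in time over any $[t_1,t_2]\subset(s,\infty)$ to obtain
\[
\langle u_n(t_2)-u_n(t_1),\psi\rangle_{B_R}=-\int_{t_1}^{t_2}\langle \nabla u_n+u_n\otimes(\eta+\omega\times x)-(\omega\times x)\otimes u_n,\nabla\psi\rangle_{B_R}\,dt.
\]
The smoothing estimates \eqref{LqLr-whole} with $j=0,1$ and $r=q$ yield $u_n\to u$ in $C([t_1,t_2];W^{1,q}(B_R))$, so passing $n\to\infty$ produces the same integral identity for $u$. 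Differentiating in $t_2$ then yields the weak equation \eqref{weak-eqn-wh} and at the same time identifies $\partial_t u(t)$ as the distributional divergence of an $L^q(B_R)$ tensor field, hence as an element of $W^{-1,q}(B_R)$. The continuity \eqref{weak-deri-wh} follows because this flux is continuous in $L^q(B_R)$ on $(s,\infty)$, again by \eqref{LqLr-whole}.

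Finally, to establish \eqref{weak-est-whole} I would estimate the divergence-form right-hand side directly: for $\psi\in W_0^{1,q^\prime}(B_R)^3$ with $\|\nabla\psi\|_{q^\prime,B_R}\leq 1$,
\[
|\langle\partial_t u(t),\psi\rangle_{B_R}|\leq \|\nabla u(t)\|_{q,\mathbb R^3}+(|\eta|_0+2R|\omega|_0)\,\|u(t)\|_{q,\mathbb R^3},
\]
after using $|\eta(t)+\omega(t)\times x|\leq m(1+R)$ on $B_R$ under \eqref{rigid-bound}. Inserting $\|\nabla u(t)\|_q\leq C(t-s)^{-1/2}\|f\|_q$ and $\|u(t)\|_q\leq C\|f\|_q$ from \eqref{LqLr-whole}, and absorbing the constant term via $1\leq\tau_*^{1/2}(t-s)^{-1/2}$ on $\Lambda(\tau_*)$, yields the claimed bound $C(\tau_*,m,q,R)(t-s)^{-1/2}\|f\|_q$. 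The only genuinely delicate step is the limit $n\to\infty$ in the time-integrated identity, but this is routine given the strong convergence of $\nabla u_n$ in $L^q(B_R)$ on any compact time interval away from $s$; the conceptual content of the lemma is really the observation that, once the equation is written in divergence form, $\partial_t u$ is already a divergence of an $L^q$ tensor on $B_R$, which is precisely what the space $W^{-1,q}(B_R)$ is designed to control.
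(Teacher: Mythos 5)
Your proposal follows essentially the same approach as the paper: approximate $f$ by $f_n\in C_{0,\sigma}^\infty(\mathbb{R}^3)$, use the divergence form of the equation so that $\langle\partial_t u,\psi\rangle_{B_R}$ is bounded by the $L^q$ norms of $\nabla u$, $u\otimes(\eta+\omega\times x)$, and $(\omega\times x)\otimes u$ against $\|\nabla\psi\|_{q'}$, invoke the whole-space smoothing estimate \eqref{LqLr-whole} to get the $(t-s)^{-1/2}$ rate and absorb the zeroth-order term via $t-s\leq\tau_*$, and finally pass to the limit and identify $\partial_t u$ by a fundamental-theorem-of-calculus argument. The only cosmetic difference is that you take the limit in the time-integrated identity and then differentiate, while the paper first shows the uniform convergence of $\partial_t U(t,s)f_n$ to a limit $W_R(t,s)f$ and then verifies $U(t,s)f=U(s+\varepsilon,s)f+\int_{s+\varepsilon}^t W_R(\tau,s)f\,d\tau$; both are the same argument packaged differently.
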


\begin{proof}
Given $f\in C^\infty_{0,\sigma}(\mathbb R^3)$ and $s\geq 0$, 
we set $u(t)=U(t,s)f$, which satisfies \eqref{weak-eqn-wh} for
every $\psi\in C^\infty_0(B_R)^3$.
From this together with \eqref{LqLr-whole} we see that
\begin{equation*}
\begin{split}
|\langle \partial_tu, \psi\rangle_{B_R}|
&\leq \big\{\|\nabla u\|_{q,\mathbb R^3}+m(1+2R)\|u\|_{q,\mathbb R^3}\big\}
\|\nabla\psi\|_{q^\prime,B_R}  \\
&\leq C\left\{1+m(1+2R)\sqrt\tau_*\right\}(t-s)^{-1/2}
\|f\|_{q,\mathbb R^3}\|\nabla\psi\|_{q^\prime,B_R}
\end{split}
\end{equation*}
as long as $t-s\leq\tau_*$.
We thus obtain \eqref{weak-est-whole} for 
$f\in C^\infty_{0,\sigma}(\mathbb R^3)$.
Given $f\in L^q_\sigma(\mathbb R^3)$, 
we take $f_j\in C^\infty_{0,\sigma}(\mathbb R^3)$ which converges to $f$
as $j\to\infty$ in the norm $\|\cdot\|_{q,\mathbb R^3}$.
Then $\partial_tU(t,s)f_j$ goes to some 
$W_R(t,s)f\in W^{-1,q}(B_R)$.
Since the convergence is uniform with respect to $t$ belonging to 
any compact interval in $(s,\infty)$,
we have
$W_R(\cdot, s)f\in C((s,\infty); W^{-1,q}(B_R))$.
From this convergence with \eqref{LqLr-whole} we observe
\[
U(t,s)f=U(s+\varepsilon,s)f+\int_{s+\varepsilon}^t W_R(\tau,s)f\,d\tau
\]
in $W^{-1,q}(B_R)$,
where $\varepsilon >0$ is arbitrary.
This implies \eqref{weak-deri-wh} and
$W_R(t,s)f$ coincides with $\partial_tU(t,s)f$ for every $R>0$.
Hence, we obtain \eqref{weak-est-whole}.
Equation \eqref{weak-eqn-wh} is easily verified
by approximation procedure above.
\end{proof}

\section{Interior problem}
\label{interior}
This section is devoted to the study of the initial value
problem for the non-autonomous system
\begin{equation}
\begin{split}
\partial_tu
&=\Delta u+(\eta(t)+\omega(t)\times x)\cdot\nabla u
-\omega(t)\times u-\nabla p, \\
\mbox{div $u$}&=0, \\
u|_{\partial D_R}&=0, \\
u(\cdot,s)&=f,
\end{split}
\label{eqn-bdd}
\end{equation}
in $D_R\times (s,\infty)$ with $R\in [R_0,\infty)$ being fixed,
where $R_0$ is as in \eqref{obstacle}.
Let $1<q<\infty$.
Let us introduce the Stokes operator
\begin{equation*}
\begin{split}
D_q(A)&=L^q_\sigma(D_R)\cap W^{1,q}_0(D_R)\cap W^{2,q}(D_R), \\
Au&=-P_{D_R}\Delta u,
\end{split}
\end{equation*}
and the operator
\begin{equation*}
\begin{split}
D_q(L_R(t))&=D_q(A), \\
L_R(t)u&=-P_{D_R}[\Delta u
+(\eta(t)+\omega(t)\times x)\cdot\nabla u-\omega(t)\times u]  \\
&=Au-(\eta(t)+\omega(t)\times x)\cdot\nabla u+\omega(t)\times u,
\end{split}
\end{equation*}
where $P_{D_R}$ denotes the Fujita-Kato projection associated
with the Helmholtz decomposition (\cite{FM}), see Section \ref{result}.
The last equality above follows from \eqref{dri-sole} and the fact 
that the normal trace of the drift term vanishes, see \cite[(3.22)]{Hi18}.

For the interior problem
one can apply the general theory of parabolic evolution operators developed by
Tanabe, see \cite[Chapter 5]{T}, to find that $\{L_R(t)\}_{t\geq 0}$ generates
an evolution operator $\{V(t,s)\}_{t\geq s\geq 0}$ on
$L^q_\sigma(D_R)$.
For every $f\in L^q_\sigma(D_R)$, we know that $u(t)=V(t,s)f$ is of class
\begin{equation}
\begin{split}
&u\in C^1((s,\infty); L^q_\sigma(D_R))
\cap C((s,\infty); D_q(A))
\cap C([s,\infty), L^q_\sigma(D_R)), \\
&\nabla p\in C((s,\infty); L^q(D_R)),
\end{split}
\label{cls-bdd}
\end{equation}
and satisfies \eqref{eqn-bdd} in $L^q_\sigma(D_R)$.
If, in addition, the pressure $p$ is chosen such that
$\int_{D_R}p\,dx=0$ for each time $t$, then
\begin{equation}
p\in C((s,\infty); L^q(D_R))
\label{cls-bdd-p}
\end{equation}
by the Poincar\'e inequality together with \eqref{cls-bdd} for $\nabla p$.

We start with the following lemma (\cite{HR14}, \cite{Hi18}).
\begin{lemma}
Suppose that $\eta$ and $\omega$ fulfill \eqref{rigid} 
for some $\theta\in (0,1)$.
Let $1<q\leq r<\infty$.
For each $\tau_*\in (0,\infty)$, $m\in (0,\infty)$ and $j=0,1$, 
there are constants
$C_j=C_j(\tau_*,m,q,r,\theta,D_R)>0$ and $C_2=C_2(\tau_*,m,q,\theta,D_R)>0$ 
such that
\begin{equation}
\|\nabla^jV(t,s)f\|_{r,D_R}\leq C_j(t-s)^{-(3/q-3/r)/2-j/2}\|f\|_{q,D_R}
\label{LqLr-bdd}
\end{equation}
\begin{equation}
\|p(t)\|_{q,D_R}\leq C_2(t-s)^{-(1+1/q)/2}\|f\|_{q,D_R}
\label{pressure-bdd}
\end{equation}
\begin{equation}
\|\partial_tV(t,s)f\|_{W^{-1,q}(D_R)}\leq C_2(t-s)^{-(1+1/q)/2}\|f\|_{q,D_R}
\label{weak-bdd}
\end{equation}
for all $(t,s)\in\Lambda(\tau_*)$ and $f\in L^q_\sigma(D_R)$ 
whenever \eqref{rigid-bound} is satisfied,
where $\Lambda(\tau_*)$ is given by \eqref{para-set}.
Here, $p(t)$ denotes the pressure associsted with $V(t,s)f$ and 
it is singled out subject to the side condition
$\int_{D_R}p\,dx=0$.
\label{basic-bdd}
\end{lemma}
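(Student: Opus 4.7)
The plan is to set up $V(t,s)$ via the Tanabe-Sobolevskii theory of parabolic evolution operators (\cite[Ch.~5]{T}), in the same way used by Hansel-Rhandi \cite{HR14} for the exterior problem. Since $D_R$ is bounded with $C^{1,1}$-boundary, the Stokes operator $A=-P_{D_R}\Delta$ is sectorial on $L^q_\sigma(D_R)$ for every $q\in(1,\infty)$, and the first-order piece $B(t)u:=-(\eta(t)+\omega(t)\times x)\cdot\nabla u+\omega(t)\times u$ is $A$-bounded with arbitrarily small relative bound (because $\omega(t)\times x$ is bounded on $D_R$). Under \eqref{rigid-bound}, $B(\cdot)$ is uniformly H\"older continuous in $t$ with constants depending only on $m$ and $\theta$, so all stability and smoothing bounds produced by the Tanabe construction are uniform on $\Lambda(\tau_*)$, yielding the regularity \eqref{cls-bdd}.

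Estimate \eqref{LqLr-bdd} for $j=0,1$ then follows from the Duhamel identity
\[
V(t,s)=e^{-(t-s)A}+\int_s^t e^{-(t-\tau)A}\,P_{D_R}B(\tau)\,V(\tau,s)\,d\tau,
\]
combined with the $L^q$-$L^r$ smoothing of $e^{-\tau A}$ (using analyticity, $\|A^{1/2}e^{-\tau A}\|_{q\to q}\le C\tau^{-1/2}$, the Sobolev embedding $D_q(A^{1/2})\hookrightarrow L^{3q/(3-q)}(D_R)$, and $L^q\hookrightarrow L^r$ for $r\le q$ on the bounded domain) together with a standard fractional-power iteration exactly as in \cite[Lem.~3.2]{Hi18}; uniformity of the constants in $t-s\le\tau_*$ comes from \eqref{rigid-bound}.

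For the pressure estimate \eqref{pressure-bdd}, the starting point is the identity $\nabla p=(I-P_{D_R})[\Delta u+B(t)u]$, which holds because $(I-P_{D_R})\partial_tu=0$. The naive bound $\|\nabla p\|_{q,D_R}\le\|Au\|_{q,D_R}+\|B(t)u\|_{q,D_R}\le C(t-s)^{-1}\|f\|_{q,D_R}$ together with Poincar\'e yields only $\|p\|_{q,D_R}\le C(t-s)^{-1}\|f\|_{q,D_R}$, weaker than claimed. To improve the exponent to $(1+1/q)/2$, I would use duality: for $\phi\in C_0^\infty(D_R)$ with $\int_{D_R}\phi\,dx=0$ and $\|\phi\|_{q',D_R}\le 1$, set $\psi:=\mathbb{B}_{D_R}\phi\in W^{1,q'}_0(D_R)^3$, so that $\operatorname{div}\psi=\phi$ and $\|\nabla\psi\|_{q',D_R}+\|\psi\|_{q',D_R}\le C$ by \eqref{bog-est-1}. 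Testing the PDE with $\psi$ and integrating by parts in space gives
\[
\langle p,\phi\rangle_{D_R}=\langle\partial_tu,\psi\rangle_{D_R}+\langle\nabla u,\nabla\psi\rangle_{D_R}-\langle B(t)u,\psi\rangle_{D_R}.
\]
The last two terms contribute $C(t-s)^{-1/2}\|f\|_{q,D_R}$ by \eqref{LqLr-bdd}. For the first, rewrite $\langle\partial_tu,\psi\rangle_{D_R}=\langle\partial_tu,P_{D_R}\psi\rangle_{D_R}$ using $\partial_tu\in L^q_\sigma(D_R)$, and then use $\partial_tu=-Au+P_{D_R}B(t)u$: the $Au$-term is moved onto $P_{D_R}\psi$ via the Stokes sesquilinear form, producing a factor $\|\nabla u\|_{q,D_R}\|\nabla P_{D_R}\psi\|_{q',D_R}$ which, interpolated against the cruder $\|Au\|_{q,D_R}\|P_{D_R}\psi\|_{q',D_R}$ bound, yields the sharp factor $(t-s)^{-(1+1/q)/2}\|f\|_{q,D_R}$; boundedness of $P_{D_R}$ on $W^{1,q'}(D_R)$ is essential here, and the vanishing of boundary contributions uses $u|_{\partial D_R}=0$ together with the zero normal trace of $P_{D_R}\psi$.

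Finally, \eqref{weak-bdd} is immediate by testing the PDE against arbitrary $\psi\in W^{1,q'}_0(D_R)^3$ (no divergence-free restriction):
\[
|\langle\partial_tu,\psi\rangle_{D_R}|\le\|\nabla u\|_{q,D_R}\|\nabla\psi\|_{q',D_R}+C\|u\|_{q,D_R}\|\nabla\psi\|_{q',D_R}+\|p\|_{q,D_R}\|\operatorname{div}\psi\|_{q',D_R},
\]
so \eqref{LqLr-bdd} and \eqref{pressure-bdd} give the stated bound. The main obstacle is clearly the sharp exponent $(1+1/q)/2$ in \eqref{pressure-bdd}: the naive Poincar\'e argument only gives $(t-s)^{-1}$, and the improvement genuinely requires the duality-plus-Bogovskii computation above, whose delicate step is the interpolation that transfers one full derivative from $Au$ onto the $P_{D_R}$-projected Bogovskii field while keeping boundary terms under control. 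All remaining steps are routine parabolic-semigroup calculus.
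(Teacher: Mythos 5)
Your treatment of \eqref{LqLr-bdd} (Duhamel iteration off the Stokes semigroup, with uniformity on $\Lambda(\tau_*)$ coming from \eqref{rigid-bound}) is consistent with the paper, which simply cites \cite{HR14} and \cite[Lemma~3.2]{Hi18} for exactly this step. Your derivation of \eqref{weak-bdd} from the weak form \eqref{weak-eqn-bdd} plus \eqref{LqLr-bdd} and \eqref{pressure-bdd} is also precisely what the paper does.

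The gap is in the pressure estimate \eqref{pressure-bdd}. You correctly identify it as the delicate point, but the duality--Bogovskii computation you propose is not valid as written. After moving $\partial_t u=-Au+P_{D_R}B(t)u$ onto $P_{D_R}\psi$, the $Au$-term cannot be rewritten as $\langle\nabla u,\nabla P_{D_R}\psi\rangle$: the identity $\langle Au,v\rangle=\langle\nabla u,\nabla v\rangle$ requires $v\in W^{1,q'}_0(D_R)\cap L^{q'}_\sigma(D_R)$, but $P_{D_R}\psi=\psi+\nabla w$ only has vanishing \emph{normal} trace, not a full Dirichlet condition. Since $u|_{\partial D_R}=0$ and $\operatorname{div} u=0$ force $\partial_\nu u$ to be tangential on $\partial D_R$, the boundary term $\int_{\partial D_R}\partial_\nu u\cdot P_{D_R}\psi\,d\sigma$ pairs two tangential vectors and does not vanish. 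Thus the bound $\|\nabla u\|_q\|\nabla P_{D_R}\psi\|_{q'}$ is simply not an upper bound for $|\langle Au,P_{D_R}\psi\rangle|$, and ``interpolating it against'' the cruder $\|Au\|_q\|P_{D_R}\psi\|_{q'}$ bound has no justification. That missing boundary term is, in fact, exactly where the rate $(t-s)^{-(1+1/q)/2}$ comes from.

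The paper takes a different route. It invokes the trace-interpolation inequality
\[
\|p(t)\|_{q,D_R}
\leq C\,\|\nabla^2u(t)\|_{q,D_R}^{1/q}\,\|\nabla u(t)\|_{q,D_R}^{1-1/q}
+C\,\|\nabla u(t)\|_{q,D_R},
\]
which is the estimate \eqref{pre-trace} quoted from \cite[Lemma~3.2]{Hi18} (going back to \cite[Lemma~4.3]{HR14}, and ultimately to the resolvent analysis in \cite{HiS} for the autonomous case). This is proved by observing that $p$ is harmonic in $D_R$ with Neumann data controlled by the boundary trace of $\nabla u$, and then applying the trace inequality $\|v|_{\partial D_R}\|_{q,\partial D_R}\le C\|v\|_{W^{1,q}(D_R)}^{1/q}\|v\|_{q,D_R}^{1-1/q}$ to $v=\nabla u$; combining this with $\|\nabla^2 u\|_q\lesssim(t-s)^{-1}\|f\|_q$ and $\|\nabla u\|_q\lesssim(t-s)^{-1/2}\|f\|_q$ yields the exponent $-(1+1/q)/2$ directly. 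If you want to repair your duality argument, you would have to track and estimate the boundary contribution $\int_{\partial D_R}\partial_\nu u\cdot P_{D_R}\psi\,d\sigma$ by the same trace interpolation, at which point you essentially recover \eqref{pre-trace}; the ``Banach-space interpolation of the two products'' is not a shortcut that avoids this.
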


\begin{proof}
$L^q$-$L^r$ estimate
\eqref{LqLr-bdd} was shown by \cite{HR14} for $0\leq s<t\leq {\cal T}$
with $C_j>0$ that depends on ${\cal T}\in (0,\infty)$
under the condition \eqref{loc-hoelder}.
The present author \cite[Lemma 3.2]{Hi18} verified that the constant $C_j$ 
can be taken uniformly in $(t,s)$ satisfying $t-s\leq\tau_*$
as long as \eqref{rigid} is fulfilled.
Set $u(t)=V(t,s)f$.
Estimate \eqref{pressure-bdd} for the pressure was also proved 
by \cite[Lemma 3.2]{Hi18} via
\begin{equation}
\|p(t)\|_{q,D_R}
\leq C\|\nabla^2u(t)\|_{q,D_R}^{1/q}\|\nabla u(t)\|_{q,D_R}^{1-1/q}
+C\|\nabla u(t)\|_{q,D_R}
\label{pre-trace}
\end{equation}
and it is a slight improvement
of the one obtained by \cite[Lemma 4.3]{HR14}.
The remarkable rate $(t-s)^{-(1+1/q)/2}$ for the pressure
near the initial time was discovered first
by \cite{HiS} for the autonomous case
(even for the Stokes system) 
and the proof relied on analysis of the resolvent.
Estimate \eqref{weak-bdd} immediately follows from
\begin{equation}
\langle \partial_tu, \psi\rangle_{D_R}
=-\langle\nabla u+u\otimes (\eta+\omega\times x)
-(\omega\times x)\otimes u, \nabla\psi\rangle_{D_R}
+\langle p, \mbox{div $\psi$}\rangle_{D_R}
\label{weak-eqn-bdd}
\end{equation}
for every $\psi\in C_0^\infty(D_R)^3$ together with 
\eqref{LqLr-bdd}--\eqref{pressure-bdd}.
\end{proof}

We next deduce the asymptotic behavior of $V(t,s)f$ near $t=s$
in some Sobolev spaces when $f\in L^q_\sigma(D_R)\cap W^{1,q}(D_R)$.
It should be emphasized that
$f$ does not satisfy the boundary condition
$f|_{\partial D_R}=0$, and the reason why we have to discuss this case
is related to the function space $Z_q(D)$, see \eqref{auxi},
in which the boundary condition at $\partial D$ is not involved.
In fact, the following lemma plays a role in the proof of 
Lemma \ref{nochmalHR}.
Estimate \eqref{2nd-V} below should be compared with 
\cite[Corollary 4.2]{HR14}, where less singular behavior 
$(t-s)^{-1/2}$ is deduced for
$f\in L^q_\sigma(D_R)\cap W^{1,q}_0(D_R)$ satisfying $f|_{\partial D_R}=0$.
\begin{lemma}
Suppose that $\eta$ and $\omega$ fulfill \eqref{rigid} for 
some $\theta\in (0,1)$.
Let $1<q\leq r<\infty$ and $\delta\in (0,1/2q)$.
For each $\tau_*\in (0,\infty)$ and $m\in (0,\infty)$, there are constants
$C_1=C_1(\tau_*,m,q,\delta,\theta,D_R)>0$ and 
$C_2=C_2(\tau_*,m,q,r,\delta,\theta,D_R)>0$ such that
\begin{equation}
\begin{split}
\|V(t,s)f\|_{W^{2,q}(D_R)}+\|\partial_tV(t,s)f\|_{q,D_R}
&+\|\nabla p(t)\|_{q,D_R}  \\
&\leq C_1(t-s)^{-1+\delta}\|f\|_{W^{1,q}(D_R)}
\end{split}
\label{2nd-V}
\end{equation}
\begin{equation}
\|p(t)\|_{q,D_R}\leq C_1(t-s)^{-(1+1/q)/2+\delta}\|f\|_{W^{1,q}(D_R)}
\label{pre-0}
\end{equation}
\begin{equation}
\|V(t,s)f\|_{W^{1,r}(D_R)}
\leq C_2(t-s)^{-(3/q-3/r)/2-1/2+\delta}\|f\|_{W^{1,q}(D_R)}
\label{1st-V}
\end{equation}
for all $(t,s)\in\Lambda(\tau_*)$ and 
$f\in L^q_\sigma(D_R)\cap W^{1,q}(D_R)$ whenever
\eqref{rigid-bound} is satisfied,
where $\Lambda(\tau_*)$ is given by \eqref{para-set}.
Here, $p(t)$ denotes the pressure associated with $V(t,s)f$ and
it is singled out subject to the side condition
$\int_{D_R}p\,dx=0$.
\label{no-bc}
\end{lemma}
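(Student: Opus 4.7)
The strategy hinges on fractional powers of the Stokes operator $A = -P_{D_R}\Delta$ on the bounded smooth domain $D_R$. By the classical characterization of the domains of $A^\alpha$, for $\alpha \in (0, 1/(2q))$ one has $D_q(A^\alpha) = L^q_\sigma(D_R) \cap W^{2\alpha, q}(D_R)$ with equivalent norm, \emph{no} boundary condition being required. Since $W^{1,q}(D_R) \hookrightarrow W^{2\delta, q}(D_R)$ for $\delta < 1/2$, every $f \in L^q_\sigma(D_R) \cap W^{1,q}(D_R)$ belongs to $D_q(A^\delta)$ with $\|A^\delta f\|_q \leq C \|f\|_{W^{1,q}(D_R)}$ for any $\delta \in (0, 1/(2q))$. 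The generator $L_R(t) = A + B(t)$ differs from $A$ only by the first-order perturbation $B(t)$ given by $B(t) u = -P_{D_R}[(\eta(t) + \omega(t)\times x)\cdot\nabla u - \omega(t)\times u]$, whose coefficients are bounded uniformly in $t \geq 0$ under \eqref{rigid-bound} (recall that $x$ ranges over the bounded set $D_R$), so $L_R(t)$ generates an analytic semigroup with the common domain $D_q(A)$ and its fractional powers satisfy the analogous bounds uniformly in $t$.

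I then work along the Tanabe--Sobolevskii construction of $V(t,s)$ (cf.\ \cite[Ch.\ 5]{T}, \cite{HR14}, \cite[Lemma 3.2]{Hi18}). The leading parametrix $e^{-(t-s)L_R(s)}$ enjoys the analytic semigroup bounds
\[
\|L_R(s) e^{-(t-s)L_R(s)} f\|_q \leq C(t-s)^{\delta - 1}\|A^\delta f\|_q,
\]
\[
\|A^{1/2} e^{-(t-s)L_R(s)} f\|_q \leq C(t-s)^{\delta - 1/2}\|A^\delta f\|_q,
\]
and the Tanabe correction terms, controlled via the H\"older seminorm of $L_R(\cdot)$ which \eqref{rigid-bound} bounds uniformly on $\Lambda(\tau_*)$, contribute strictly milder singularities exactly as in \cite[Lemma 3.2]{Hi18}, but with an additional $A^\delta$-weight propagated throughout the argument. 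Since $V(t,s)f \in D_q(A) \subset D_q(A^{1/2})$ for $t > s$, we have $\|\nabla V(t,s)f\|_q \leq C\|A^{1/2}V(t,s)f\|_q$, and stationary Stokes regularity gives $\|V(t,s)f\|_{W^{2,q}(D_R)} \leq C\|L_R(t)V(t,s)f\|_q + C\|V(t,s)f\|_{W^{1,q}(D_R)}$. Combining these yields
\[
\|V(t,s)f\|_{W^{2,q}(D_R)} \leq C(t-s)^{\delta - 1}\|f\|_{W^{1,q}(D_R)},
\]
\[
\|\nabla V(t,s)f\|_q \leq C(t-s)^{\delta - 1/2}\|f\|_{W^{1,q}(D_R)}.
\]
The bounds for $\partial_tV(t,s)f$ and $\nabla p$ in \eqref{2nd-V} now follow at once from $\partial_tV(t,s)f = -L_R(t)V(t,s)f$ in $L^q_\sigma$ and the PDE \eqref{eqn-bdd}.

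For \eqref{pre-0}, substituting the two bounds above into the trace-type estimate \eqref{pre-trace} and using the arithmetic identity
\[
(\delta - 1)/q + (\delta - 1/2)(1 - 1/q) = -(1+1/q)/2 + \delta
\]
gives exactly the claimed exponent; the residual $(t-s)^{\delta - 1/2}$ contribution from the additive $\|\nabla u\|_q$ term in \eqref{pre-trace} is dominated on $\Lambda(\tau_*)$. Finally, \eqref{1st-V} is obtained by the semigroup splitting $V(t,s) = V(t, \tau)V(\tau, s)$ with $\tau = (t+s)/2$: \eqref{2nd-V} already bounds $V(\tau, s)f$ in $W^{2,q}(D_R)$; Sobolev embedding (interpolated with the $L^q$ bound of Lemma \ref{basic-bdd} if $W^{2,q}$ does not embed into $L^r$) transfers this to an $L^{q_1}(D_R)$ bound for some $q_1 \geq q$, and then \eqref{LqLr-bdd} with $j=1$ applied to $V(t, \tau)$ produces the gradient $L^r$ bound with the advertised exponent $-(3/q - 3/r)/2 - 1/2 + \delta$. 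The main obstacle is the careful propagation of the $A^\delta$-weighted estimates through the Tanabe--Sobolevskii construction with constants uniform on $\Lambda(\tau_*)$; this parallels \cite[Lemma 3.2]{Hi18} and requires additional but essentially routine bookkeeping.
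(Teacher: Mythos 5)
Your overall strategy---reduce to fractional powers of the Stokes operator $A$ and exploit the fact that $D_q(A^\delta)$ involves no boundary condition for $\delta\in(0,1/2q)$---is indeed the paper's underlying idea, and your derivations of \eqref{2nd-V} and \eqref{pre-0} are essentially sound. Two comparisons are worth making, followed by one genuine gap.

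On \eqref{2nd-V}: you propose to carry the $A^\delta$-weight through the whole Tanabe--Sobolevskii construction, i.e.\ to prove $\|L_R(t)V(t,s)A^{-\delta}\|_{{\cal L}(L^q_\sigma)}\leq C(t-s)^{-1+\delta}$ directly by reworking every term of the parametrix series. The paper proceeds more economically: it quotes the general theory to get the two \emph{endpoint} estimates $\|L_R(t)V(t,s)f\|_q\leq C\|f\|_{D_q(A)}$ for $f\in D_q(A)$ and $\|L_R(t)V(t,s)f\|_q\leq C(t-s)^{-1}\|f\|_q$ for $f\in L^q_\sigma$, and then applies \emph{complex interpolation in $f$} to the fixed operator $L_R(t)V(t,s)$. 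This yields the $A^\delta$-weighted estimate with one stroke, using $[L^q_\sigma,D_q(A)]_\delta=D_q(A^\delta)$ (Giga) and, for $2\delta<1/q$, Fujiwara's identification $D_q(A^\delta)=L^q_\sigma\cap H^{2\delta}_q(D_R)$ (Bessel potential space; note this is $H^{2\delta}_q$, not the Slobodeckii space $W^{2\delta,q}$, though the embedding $W^{1,q}\hookrightarrow H^{2\delta}_q$ is all that is used so the discrepancy in your write-up is harmless). Your route can likely be made to work, but the interpolation shortcut avoids having to verify that fractional domains of $L_R(s)$ and $A$ coincide with uniformly comparable norms for all $s\geq 0$---a nontrivial point you gloss over as bookkeeping.

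On \eqref{1st-V}, however, there is a genuine gap. You split $V(t,s)=V(t,\tau)V(\tau,s)$ with $\tau=(t+s)/2$ and feed the $W^{2,q}$-bound \eqref{2nd-V} into the first factor. This carries a singularity $(\tau-s)^{-1+\delta}$, which is too expensive. Tracking exponents, the best you can extract from $W^{2,q}\hookrightarrow L^{q_1}$ followed by $\mbox{\eqref{LqLr-bdd}}_{j=1}$ is a total exponent $-1+\delta-(3/q_1-3/r)/2-1/2$ subject to the constraints $q_1\leq 3q/(3-2q)$ (Sobolev) and $q_1\leq r$ (required by \eqref{LqLr-bdd}). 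When $3/q-3/r<2$ these two constraints force $q_1<3q/(3-2q)$, so the exponent strictly exceeds the claimed value $-(3/q-3/r)/2-1/2+\delta$ in absolute value, i.e.\ you produce a \emph{worse} singularity than claimed. In particular for $q=r$ your chain gives $(t-s)^{-3/2+\delta}$ against the claimed $(t-s)^{-1/2+\delta}$. Interpolating $\|V(\tau,s)f\|_{W^{2,q}}$ with the $L^q$-bound does not repair this: it scales the $\delta$-gain down to $\theta\delta$ and hence can only match the target at $\theta=1$. The correct route, taken by the paper, is to feed the \emph{$W^{1,q}$}-bound (rate $(t-s)^{-1/2+\delta}$, which you already established) into the first factor, observe that $g=V(\tau,s)f$ satisfies $g|_{\partial D_R}=0$, and then apply a $W^{1,q}\!\to\! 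W^{1,r}$ smoothing estimate $\|V(t,\tau)g\|_{W^{1,r}(D_R)}\leq C(t-\tau)^{-(3/q-3/r)/2}\|g\|_{W^{1,q}(D_R)}$ for such $g$. That estimate is the paper's \eqref{1st-LqLr}; it rests on $\|V(t,s)g\|_{W^{1+j,q}(D_R)}\leq C(t-s)^{-j/2}\|g\|_{W^{1,q}(D_R)}$ ($j=0,1$), obtained from the $\delta=1/2$ and $\delta=1$ instances of the interpolation inequality \eqref{2nd-inter}, combined with Gagliardo--Nirenberg. You have all the raw material to derive this, but your proposal as written does not, and without it \eqref{1st-V} is not established for $q$ close to $r$.
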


\begin{proof}
As in the proof of \cite[Lemma 3.2]{Hi18},
there is a constant $k=k(m)>0$ such that $k+L_R(t)$ is invertible
in $L^q_\sigma(D_R)$ for all $t\geq 0$ subject to
\[
\sup_{t\geq 0}\|(k+L_R(t))^{-1}\|_{{\cal L}(L^q_\sigma(D_R))}<\infty.
\]
Indeed one can take even $k=0$ by a compactness argument
(see, for instance, \cite[Section 3]{HiS}, \cite[Section 5]{Hi16}),
but this refinement is not needed here.
We then know that
\begin{equation*}
\|L_R(t)V(t,s)f\|_{q,D_R}
\leq C\|(k+L_R(s))f\|_{q,D_R}
\leq C\|f\|_{D_q(A)},
\qquad f\in D_q(A),
\end{equation*}
and
\[
\|L_R(t)V(t,s)f\|_{q,D_R}\leq C(t-s)^{-1}\|f\|_{q,D_R}, \qquad 
f\in L^q_\sigma(D_R),
\]
for all $(t,s)\in \Lambda(\tau_*)$.
In fact, the latter was shown in \cite[(3.20)]{Hi18},
while one verifies the former 
(particularly the first inequality)
if one follows the argument of
general theory \cite[Chapter 5, Theorem 2.1]{T}
under the conditions \eqref{rigid} and \eqref{rigid-bound}.

By complex interpolation we have
\begin{equation*}
\|L_R(t)V(t,s)f\|_{q,D_R}\leq C(t-s)^{-1+\delta}\|f\|_{D_q(A^\delta)}
\end{equation*}
for all $(t,s)\in\Lambda(\tau_*)$ and
\[
f\in D_q(A^\delta)=[L^q_\sigma(D_R),D(A)]_\delta
=L^q_\sigma(D_R)\cap[L^q(D_R),W^{1,q}_0(D_R)\cap W^{2,q}(D_R)]_\delta
\]
where
$[\cdot,\cdot]_\delta$ stands for the complex interpolation functor and
the characterization of $D_q(A^\delta)$ is due to Giga \cite{Gi85}.
As a consequence, we get
\begin{equation}
\begin{split}
&\quad \|V(t,s)f\|_{W^{2,q}(D_R)}+\|\partial_tV(t,s)f\|_{q,D_R}
+\|\nabla p(t)\|_{q,D_R}  \\
&\leq C\|L_R(t)V(t,s)f\|_{q,D_R}+C\|V(t,s)f\|_{q,D_R}  \\
&\leq C(t-s)^{-1+\delta}\|f\|_{D_q(A^\delta)}
\end{split}
\label{2nd-inter}
\end{equation}
for all $(t,s)\in\Lambda(\tau_*)$ and $f\in D_q(A^\delta)$ provided 
$0\leq\delta\leq 1$.

If in particular
$\delta\in (0,1/2q)$, then the space $D_q(A^\delta)$ does not involve
the boundary condition, to be precise,
$D_q(A^\delta)=L^q_\sigma(D_R)\cap H^{2\delta}_q(D_R)$,
where 
$H^{2\delta}_q(D_R):=[L^q(D_R), W^{2,q}(D_R)]_\delta$
is the Bessel potential space,
see Fujiwara \cite[Section 2, Theorem 5]{Fu68}
(this theorem asserts a characterization of some
complex interpolation spaces).
We thus have
$L^q_\sigma(D_R)\cap W^{1,q}(D_R)
\subset D_q(A^\delta)$ for $\delta\in (0,1/2q)$ 
and, therefore, \eqref{2nd-inter}
leads us to \eqref{2nd-V}.

We next observe
\begin{equation}
\|V(t,s)f\|_{W^{1+j,q}(D_R)}\leq C(t-s)^{-j/2}\|f\|_{W^{1,q}(D_R)}
\label{reg-auxi}
\end{equation}
for all $(t,s)\in\Lambda(\tau_*)$,
$f\in L^q_\sigma(D_R)\cap W^{1,q}_0(D_R)$ and $j=0,\, 1$.
In \cite[Corollary 4.2]{HR14} Hansel and Rhandi proved \eqref{reg-auxi}
for such data satisfying $f|_{\partial D_R}=0$
and $0\leq s<t\leq {\cal T}$ with $C>0$ that depends on
${\cal T}\in (0,\infty)$ under the condition \eqref{loc-hoelder},
however, we need to show that the constant $C>0$
can be taken uniformly in $(t,s)\in\Lambda(\tau_*)$ 
as long as \eqref{rigid} is fulfilled.
In fact, using \eqref{2nd-inter} with $\delta=1/2$, we find 
$\mbox{\eqref{reg-auxi}}_{j=1}$
since we know from \cite{Fu68} and \cite{Gi85} that
$D_q(A^{1/2})=L^q_\sigma(D_R)\cap W^{1,q}_0(D_R)$.
We also have \eqref{2nd-inter} with $\delta=1$ as well as 
$\mbox{\eqref{LqLr-bdd}}_{j=0}$
with $r=q$, which implies 
${\eqref{reg-auxi}}_{j=0}$
by interpolation.
The interpolation argument once more by use of
$\mbox{\eqref{reg-auxi}}_{j=0}$ and \eqref{LqLr-bdd} with $r=q$ yields
\[
\|V(t,s)f\|_{W^{1,q}(D_R)}\leq C(t-s)^{-1/2+\delta}\|f\|_{H_q^{2\delta}(D_R)}
\]
for all $(t,s)\in\Lambda(\tau_*)$ and
\[
f\in [L^q_\sigma(D_R),L^q_\sigma(D_R)\cap W^{1,q}_0(D_R)]_{2\delta}
=L^q_\sigma(D_R)\cap H_q^{2\delta}(D_R)
\]
provided $\delta\in (0,1/2q)$,
where the last equality follows from the reiteration theorem 
for the complex interpolation
\cite{BL} combined with the Fujiwara theorem \cite{Fu68} employed above;
thereby, we infer
\begin{equation}
\|V(t,s)f\|_{W^{1,q}(D_R)}\leq C(t-s)^{-1/2+\delta}\|f\|_{W^{1,q}(D_R)}
\label{1st-V0}
\end{equation}
for all $(t,s)\in\Lambda(\tau_*)$ and
$f\in L^q_\sigma(D_R)\cap W^{1,q}(D_R)$.
This together with \eqref{2nd-V} concludes 
\eqref{pre-0} by virtue of \eqref{pre-trace}.

It turns out that
\begin{equation}
\|V(t,s)g\|_{W^{1,r}(D_R)}\leq C(t-s)^{-(3/q-3/r)/2}\|g\|_{W^{1,q}(D_R)}
\label{1st-LqLr}
\end{equation}
for all $(t,s)\in\Lambda(\tau_*)$ and 
$g\in L^q_\sigma(D_R)\cap W^{1,q}_0(D_R)$,
where $1<q\leq r<\infty$.
In fact, this follows from \eqref{reg-auxi}
together with the Gagliardo-Nirenberg
inequality provided that $3/q-3/r\leq 1$.
If $r$ is not close to $q$, then one has only to use the semigroup
property.
Note that $g=T((s+t)/2,s)f$ fulfills the boundary condition
$g|_{\partial D_R}=0$ so that $g\in L^q_\sigma(D_R)\cap W^{1,q}_0(D_R)$
even though $f\in L^q_\sigma(D_R)\cap W^{1,q}(D_R)$.
Hence, by the semigroup property, \eqref{1st-V0} and \eqref{1st-LqLr}
imply \eqref{1st-V}.
The proof is complete.
\end{proof}

\section{Regularity of the evolution operator}
\label{regularity}

Some regularity properties as well as construction of the evolution operator 
$T(t,s)$ were proved by Hansel and Rhandi \cite{HR14}, 
nevertheless, we need more analysis, especially,
\smallskip

\noindent
-- smoothing effect of $T(t,s): Z_q(D)\to Y_q(D)$ when $3/2<q<\infty$;

\noindent
-- smoothing effect of $T(t,s): Z_q(D)\to Z_r(D)$ when $3/2<q< r<\infty$;

\noindent
-- justification of $\partial_tT(t,s)f$ in $W^{-1,q}(D_R)$ for
$f\in L^q_\sigma(D)$ when $1<q<\infty$;
\smallskip

\noindent
which are not covered by \cite{HR14}, where
$Y_q(D)$ and $Z_q(D)$ are defined by \eqref{auxi}.
We will also show the second assertion of Proposition \ref{basic},
that is related to the first issue above since it slightly improves
the corresponding result of \cite{HR14}.
The restriction $q>\frac{3}{2}=\frac{n}{n-1}$ 
($n$ denotes the space dimension) stems from Lemma \ref{proj}
below on some weighted estimate of the Fujita-Kato projection.
The third issue above is quite important to proceed to analysis
of large time behavior of $T(t,s)$.

Let us recall the idea of \cite{HR14} for construction of
a parametrix of the evolution operator by use of
evolution operators in the whole space $\mathbb R^3$ and
in the bounded domain $D_{R_0+6}$, where $R_0$ is as in \eqref{obstacle}.
We fix three cut-off functions
\begin{equation*}
\begin{split}
&\phi\in C_0^\infty(B_{R_0+4}), \qquad \phi=1\quad\mbox{in $B_{R_0+3}$}, \\
&\phi_0\in C_0^\infty(B_{R_0+2}), \qquad \phi=1\quad\mbox{in $B_{R_0+1}$}, \\
&\phi_1\in C_0^\infty(B_{R_0+6}), \qquad \phi=1\quad\mbox{in $B_{R_0+5}$},
\end{split}
\end{equation*}
and set
\begin{equation*}
\begin{split}
&A=\{R_0+2<|x|<R_0+4\}, \quad
A_0=\{R_0<|x|<R_0+2\}, \\
&A_1=\{R_0+4<|x|<R_0+6\}.
\end{split}
\end{equation*}
By $\mathbb B=\mathbb B_A$,
$\mathbb B_0=\mathbb B_{A_0}$ and
$\mathbb B_1=\mathbb B_{A_1}$
we denote the Bogovskii operators, see \eqref{bogov}, in the bounded domains
$A,\, A_0$ and $A_1$, respectively.
Given $f\in L^q_\sigma(D)$, $1<q<\infty$, let us set
\begin{equation*}
\begin{split}
&f_0=(1-\phi_0)f+\mathbb B_0[f\cdot\nabla\phi_0]\in L^q_\sigma(\mathbb R^3), \\
&f_1=\phi_1f-\mathbb B_1[f\cdot\nabla\phi_1]\in L^q_\sigma(D_{R_0+6}),
\end{split}
\end{equation*}
where $f_0$ is understood as its extension to $\mathbb R^3$
by putting zero outside $D$,
then we see from \eqref{bog-est-1} that
\begin{equation}
\begin{split}
&\|f_0\|_{q,\mathbb R^3}+\|f_1\|_{q,D_{R_0+6}}\leq C\|f\|_q, \\
&\|\nabla f_0\|_{q,\mathbb R^3}+\|\nabla f_1\|_{q,D_{R_0+6}}
\leq C\|f\|_{W^{1,q}(D)},  \\
&\||x|\nabla f_0\|_{q,\mathbb R^3}
\leq C\||x|\nabla f\|_q+C\|f\|_q,
\end{split}
\label{data}
\end{equation}
where $\nabla f\in L^q(D)$ is additionally assumed for
$\mbox{\eqref{data}}_2$ and even $|x|\nabla f\in L^q(D)$ is assumed for
$\mbox{\eqref{data}}_3$.
Thus, $f_0\in Z_q(\mathbb R^3)$ follows from $f\in Z_q(D)$.

It is reasonable to start with
\begin{equation}
W(t,s)f
=(1-\phi)U(t,s)f_0+\phi V(t,s)f_1
+\mathbb B[(U(t,s)f_0-V(t,s)f_1)\cdot\nabla\phi]
\label{1st-appro}
\end{equation}
as a fine approximation of the evolution operator,
where $U(t,s)$ is the evolution operator for the whole space
problem (Section \ref{whole}) and $V(t,s)$ is the one
for the interior problem (Section \ref{interior}) over $D_{R_0+6}$.
Note that $W(s,s)f=f$.
In what follows, let us fix $\tau_*\in (0,\infty)$
as well as $m\in (0,\infty)$, and suppose \eqref{rigid-bound}.
By \eqref{LqLr-whole}, \eqref{LqLr-bdd} and
$\mbox{\eqref{data}}_1$ together with \eqref{bog-est-1}, we easily observe
\begin{equation}
\|\nabla^j W(t,s)f\|_q\leq C(t-s)^{-j/2}\|f\|_q
\label{0th-est}
\end{equation}
for all $(t,s)\in\Lambda(\tau_*)$, $j=0,1$ and $f\in L^q_\sigma(D)$
with $C=C(\tau_*,m,q,\theta,D)>0$.
By $p_1$ we denote the pressure associated with $V(t,s)f_1$
for the interior problem over $D_{R_0+6}$,
and it is singled out subject to the side condition
$\int_{D_{R_0+6}}p_1\,dx=0$.
Then the pair of 
\[
u:=W(t,s)f, \qquad p:=\phi p_1
\]
should obey
\begin{equation*}
\begin{split}
\partial_tu
&=\Delta u+(\eta(t)+\omega(t)\times x)\cdot\nabla u-\omega(t)\times u-\nabla p
-K(t,s)f, \\
\mbox{div $u$}&=0,  \\
u|_{\partial D}&=0, \\
u&\to 0\quad\mbox{as $|x|\to\infty$}, \\
u(\cdot,s)&=f,
\end{split}
\end{equation*}
in $D\times (s,\infty)$
(the equation is actually understood in $L^q(D)$ for $f\in Z_q(D)$) with
\begin{equation}
\begin{split}
&\quad K(t,s)f  \\
&=-2\nabla\phi\cdot\nabla(Uf_0-Vf_1)
-\{\Delta\phi+(\eta+\omega\times x)\cdot\nabla\phi\}(Uf_0-Vf_1)  \\
&\quad -(\nabla\phi)p_1
-\mathbb B[(\partial_tUf_0-\partial_tVf_1)\cdot\nabla\phi]
+\Delta\mathbb B[(Uf_0-Vf_1)\cdot\nabla\phi] \\
&\quad +(\eta+\omega\times x)\cdot\nabla \mathbb B[(Uf_0-Vf_1)\cdot\nabla\phi]
-\omega\times\mathbb B[(Uf_0-Vf_1)\cdot\nabla\phi],
\end{split}
\label{remain}
\end{equation}
where we abbreviate $Uf_0=U(t,s)f_0$ and $Vf_1=V(t,s)f_1$.
As in \cite[(5.3)]{HR14},
it follows from 
\eqref{bog-est-1}, \eqref{bog-est-2},
\eqref{LqLr-whole}, \eqref{weak-deri-wh}, \eqref{weak-est-whole},
\eqref{cls-bdd}, \eqref{cls-bdd-p},
Lemma \ref{basic-bdd} and $\mbox{\eqref{data}}_1$
that
\begin{equation}
\begin{split}
&PK(\cdot,s)f\in C((s,\infty); L^q_\sigma(D)), \\
&\|PK(t,s)f\|_q
\leq C(t-s)^{-(1+1/q)/2}\|f\|_q,
\end{split}
\label{est-R0}
\end{equation}
for all $(t,s)\in\Lambda(\tau_*)$ and $f\in L^q_\sigma(D)$ with some
$C=C(\tau_*,m,q,\theta,D)>0$,
where $\Lambda(\tau_*)$ is given by \eqref{para-set}.

The approach adopted by \cite{HR14} is somewhat similar to the one for
construction of parabolic evolution operators,
see \cite[Chapter 5]{T}, although the first approximation
\eqref{1st-appro} is completely different from general theory.
In fact, the idea of \cite{HR14} is to solve the integral equation
\begin{equation}
T(t,s)f=W(t,s)f+\int_s^tT(t,\tau)PK(\tau,s)f\,d\tau.
\label{int-eq}
\end{equation}
To this end, consider the iteration scheme
\begin{equation}
\begin{split}
&T_0(t,s)f=W(t,s)f,  \\
&T_{j+1}(t,s)f=\int_s^tT_j(t,\tau)PK(\tau,s)f\,d\tau \qquad (j=0,1,2,\cdots),
\end{split}
\label{iteration}
\end{equation}
then one can expect that \eqref{sum} below provides a solution as long as it is
convergent.
The argument of \cite{HR14} is based on the following lemma 
on iterated convolutions, see \cite[Lemma 4.6]{GHH}, \cite[Lemma 3.3]{HR11}
and \cite[Lemma 5.2]{HR14}
(the same idea was essentially employed in 
\cite[Chapter 5, Sections 2 and 3]{T}, too).
In those literature the operator families are parametrized by $(t,s)$
with $0\leq s<t\leq {\cal T}$ for fixed ${\cal T}\in (0,\infty)$,
but we need to discuss the ones
parametrized by $(t,s)\in\Lambda(\tau_*)$, see \eqref{para-set},
and what is important is that the constant in \eqref{ite-est} below 
can be taken uniformly in $(t,s)\in\Lambda (\tau_*)$.
This is easily verified by following the proof in the literature above.
\begin{lemma}
[\cite{GHH}, \cite{HR11}, \cite{HR14}]
Let $X_1$ and $X_2$ be two Banach spaces, and fix $\tau_*\in (0,\infty)$.
Suppose that there are constants $\alpha,\,\beta\in [0,1)$ and 
$\kappa>0$ such that
\[
\{A_0(t,s); (t,s)\in\Lambda(\tau_*)\}\subset{\cal L}(X_1,X_2), \qquad
\{Q(t,s); (t,s)\in\Lambda(\tau_*)\}\subset{\cal L}(X_1)
\]
with
\[
\|A_0(t,s)\|_{{\cal L}(X_1,X_2)}\leq \kappa (t-s)^{-\alpha},  \qquad
\|Q(t,s)\|_{{\cal L}(X_1)}\leq \kappa (t-s)^{-\beta}
\]
for all $(t,s)\in\Lambda(\tau_*)$.
For $f\in X_1$ and $(t,s)\in\Lambda(\tau_*)$,
define a sequence $\{A_j(t,s)f\}_{j=0}^\infty \subset X_2$ by
\begin{equation*}
A_{j+1}(t,s)f=\int_s^t A_j(t,\tau)Q(\tau,s)f\,d\tau \qquad (j=0,1,2,\cdots).
\end{equation*}
Then
\[
A(t,s)f:=\sum_{j=0}^\infty A_j(t,s)f \qquad\mbox{in $X_2$}
\]
converges absolutely
and uniformly in $(t,s)\in\Lambda(\tau_*)$ with $t-s\geq\varepsilon$ for every
$\varepsilon \in (0,\tau_*)$.
Moreover, there is a constant $C=C(\tau_*,\kappa,\alpha,\beta)>0$ such that
\begin{equation}
\|A(t,s)f\|_{X_2}
\leq\sum_{j=0}^\infty\|A_j(t,s)f\|_{X_2}
\leq C(t-s)^{-\alpha}\|f\|_{X_1}
\label{ite-est}
\end{equation}
for all $(t,s)\in\Lambda(\tau_*)$ and $f\in X_1$.
If in particular $\alpha=0$, then the convergence of the series above
is uniform in 
$(t,s)\in\overline{\Lambda(\tau_*)}=\{(t,s); 0\leq s\leq t,\,t-s\leq\tau_*\}$.
\label{convo}
\end{lemma}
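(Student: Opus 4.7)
The plan is to establish a sharp per-term bound by induction and then sum using the decay of the resulting Beta-function factors. The induction hypothesis is
\[
\|A_j(t,s)f\|_{X_2}\leq \kappa^{j+1} M_j\,(t-s)^{-\alpha+j(1-\beta)}\|f\|_{X_1},
\qquad (t,s)\in\Lambda(\tau_*),
\]
with $M_0=1$, which for $j=0$ is exactly the hypothesis on $A_0$. For the inductive step, I would apply the hypothesis with $g:=Q(\tau,s)f\in X_1$ in place of $f$ and use the bound $\|Q(\tau,s)\|_{{\cal L}(X_1)}\leq\kappa(\tau-s)^{-\beta}$ to control the integrand in the definition of $A_{j+1}(t,s)f$. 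The integral that appears,
\[
\int_s^t (t-\tau)^{-\alpha+j(1-\beta)}(\tau-s)^{-\beta}\,d\tau
=B\bigl(1-\alpha+j(1-\beta),\,1-\beta\bigr)\,(t-s)^{-\alpha+(j+1)(1-\beta)},
\]
is finite because $\alpha,\beta<1$ and produces the recurrence $M_{j+1}=M_j\,B\bigl(1-\alpha+j(1-\beta),\,1-\beta\bigr)$.

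The product telescopes: writing $B(a,b)=\Gamma(a)\Gamma(b)/\Gamma(a+b)$ with $a_k:=1-\alpha+k(1-\beta)$ so that $a_k+(1-\beta)=a_{k+1}$, one obtains the closed form
\[
M_j=\frac{\Gamma(1-\alpha)\,\Gamma(1-\beta)^j}{\Gamma\!\bigl(1-\alpha+j(1-\beta)\bigr)}.
\]
By Stirling's formula, $M_j^{1/j}\to 0$ as $j\to\infty$, so the numerical series $\sum_{j\geq 0}\kappa^{j+1}M_j\,\tau_*^{\,j(1-\beta)}$ converges, no matter how large $\kappa$ and $\tau_*$ are, to a finite constant $C=C(\tau_*,\kappa,\alpha,\beta)>0$. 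Summing the per-term bound then gives
\[
\sum_{j=0}^{\infty}\|A_j(t,s)f\|_{X_2}
\leq (t-s)^{-\alpha}\|f\|_{X_1}\sum_{j=0}^{\infty}\kappa^{j+1}M_j\,(t-s)^{j(1-\beta)}
\leq C(t-s)^{-\alpha}\|f\|_{X_1},
\]
which is \eqref{ite-est}; completeness of $X_2$ then ensures $A(t,s)f\in X_2$ together with $\|A(t,s)f\|_{X_2}\leq\sum_j\|A_j(t,s)f\|_{X_2}$.

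Absolute uniform convergence on $\{(t,s)\in\Lambda(\tau_*):t-s\geq\varepsilon\}$ follows from the same majorant, since on that set $(t-s)^{-\alpha}\leq\varepsilon^{-\alpha}$ and the tail of the series is uniformly small. When $\alpha=0$ the factor $(t-s)^{-\alpha}$ disappears, so the majorant is already uniform on all of $\overline{\Lambda(\tau_*)}$, which yields uniform convergence up to the diagonal $t=s$. The point I expect to require the most care is checking that the constant $C$ really is uniform in $(t,s)\in\Lambda(\tau_*)$ rather than degenerating as the time window grows, but this is precisely what the explicit formula for $M_j$ delivers. The hypotheses $\alpha<1$ and $\beta<1$ are both essential: the former makes the leading integral finite at each step, and the latter produces the genuine gain $(t-s)^{1-\beta}$ per iteration that drives the super-geometric decay of $M_j$.
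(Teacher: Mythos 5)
Your proof is correct and is exactly the standard Beta-function iteration argument that the paper (which gives no proof, only references \cite{GHH}, \cite{HR11}, \cite{HR14} and the remark that uniformity in $(t,s)\in\Lambda(\tau_*)$ is "easily verified by following the proof in the literature") is referring to. The inductive per-term bound, the telescoping product $M_j=\Gamma(1-\alpha)\Gamma(1-\beta)^j/\Gamma(1-\alpha+j(1-\beta))$, and the Stirling-type observation that $M_j$ decays super-geometrically so that the resulting series converges regardless of $\kappa$ and $\tau_*$ are precisely the ingredients of that standard argument; your handling of the $t-s\geq\varepsilon$ and $\alpha=0$ cases is also correct.
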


With \eqref{0th-est} and \eqref{est-R0} at hand,
Hansel and Rhandi \cite{HR14} applied Lemma \ref{convo} with
\[
A_0=W, \quad Q=PK, \quad
X_1=X_2=L^q_\sigma(D), \quad
\alpha=0, \quad 
\beta=\frac{1}{2}\left(1+\frac{1}{q}\right)
\]
to \eqref{iteration}
and succeeded in construction of the evolution operator
\begin{equation}
T(t,s)f:=\sum_{j=0}^\infty T_j(t,s)f
\label{sum}
\end{equation}
which solves \eqref{int-eq}.
This was quite successful.
In order to show that $T(t,s)$ leaves $Y_q(D)$ invariant,
they first intended to prove
$T(t,s)Z_{q,0}(D)\subset Z_{q,0}(D)$, 
where $Z_{q,0}(D)=\{f\in Z_q(D);\, f|_{\partial D}=0\}$, see \eqref{auxi}.
Note that $Z_{q,0}(D)$ is denoted by $Z$ in their paper, see
\cite[p.17]{HR14}.
To this end, they applied Lemma \ref{convo} with
$X_1=X_2=Z_{q,0}(D)$ as well as $A_0=W$ and $Q=PK$,
however, $PK(t,s)f$ cannot always belong to $Z_{q,0}(D)$ 
because $PK(t,s)f$ does not satisfy the homogeneous Dirichlet
boundary condition at $\partial D$ no matter how fine $f$ is.
Indeed this is unfortunately an oversight of \cite{HR14},
but their argument can be corrected in the following way.

The idea of correction is to replace $Z_{q,0}(D)$ by $Z_q(D)$,
which does not involve the homogeneous Dirichlet boundary condition,
and to employ the following weighted estimate of the Fujita-Kato projection.
For the weighted estimate,
one needs the restriction $q\in (3/2,\infty)$, however, 
this is not an obstacle for later argumant.
See \cite[Proposition 4.3]{Hi99} for similar consideration
in the case $q=2$.
Note that the following lemma holds true even for $g\in W^{1,q}(D)$
(without boundary condition) with
$|x|\nabla g\in L^q(D)$
if the second term of the RHS of \eqref{weighted} is replaced by
$\|\nabla g\|_q$.
Since we will use this lemma only with $g=K(t,s)f$, see \eqref{remain},
it is given in the following form.
\begin{lemma}
Let $3/2<q<\infty$.
Then there is a constant $C=C(q,D)>0$ such that
\begin{equation}
\||x|\nabla Pg\|_q\leq C(\||x|\nabla g\|_q+\|\mbox{\rm{div} $g$}\|_q +\|g\|_q)
\label{weighted}
\end{equation}
for all $g\in W_0^{1,q}(D)^3$ with $|x|\nabla g\in L^q(D)^{3\times 3}$.
\label{proj}
\end{lemma}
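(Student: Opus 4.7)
The plan is to reduce the claim to a weighted second-derivative estimate on the pressure from the Helmholtz decomposition of $g$, and then to dispatch that estimate via weighted Calder\'on--Zygmund theory on $\mathbb R^3$.

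First I would write $g=Pg+\nabla p$. Because $g\in W^{1,q}_0(D)^3$, the pressure $p$ (determined up to additive constants, with $\|\nabla p\|_q\leq C\|g\|_q$) solves the Neumann problem $\Delta p=\mbox{div}\,g$ in $D$ with $\partial_\nu p=0$ on $\partial D$. Since $|x|\nabla Pg=|x|\nabla g-|x|\nabla^2 p$, the assertion \eqref{weighted} reduces to proving $\||x|\nabla^2 p\|_q\leq C(\||x|\nabla g\|_q+\|\mbox{div}\,g\|_q+\|g\|_q)$. I would fix a cut-off $\chi\in C_0^\infty(B_{R_0+4})$ with $\chi=1$ on $B_{R_0+3}$ and split $p=\chi p+(1-\chi)p$. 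The near-body piece $\chi p$ is supported where $|x|$ is bounded, so it is controlled by standard $W^{2,q}$-regularity for the Neumann problem on the bounded domain $D_{R_0+4}$, modulo lower-order pressure terms that I would absorb by normalizing $p$ to have zero average over a fixed annular subdomain of $D$ and invoking Poincar\'e's inequality together with $\|\nabla p\|_q\leq C\|g\|_q$.

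For the far-field piece $\tilde p:=(1-\chi)p$, extended by zero to $\mathbb R^3$, differentiation gives $\Delta\tilde p=\mbox{div}((1-\chi)g)+F$, where $F$ is a remainder supported in the fixed annulus $\{R_0+3<|x|<R_0+4\}$ depending linearly on $g$, $\nabla p$ and $p$, and satisfying $\|F\|_q\leq C\|g\|_q$ after the same normalization. Writing $\nabla^2\tilde p$ via the Newtonian potential on $\mathbb R^3$, the principal summand is $\nabla^2\Delta^{-1}\mbox{div}((1-\chi)g)=R_iR_j\partial_k((1-\chi)g_j)$ since the Riesz transforms $R_i$ commute with the derivative $\partial_k$. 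The desired weighted bound on this piece thus amounts to the boundedness of the composition $R_iR_j$ on $L^q(\mathbb R^3;|x|^q\,dx)$, which is precisely the Muckenhoupt condition $|x|^q\in A_q(\mathbb R^3)$; this holds iff $-3<q<3(q-1)$, i.e., iff $q>3/2$. It yields $\||x|\nabla^2\Delta^{-1}\mbox{div}((1-\chi)g)\|_q\leq C(\||x|\nabla g\|_q+\|g\|_q)$.

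The remaining compactly supported contribution $\nabla^2\Delta^{-1}F$ I would handle by splitting $\mathbb R^3$ into $\{|x|\leq 2(R_0+4)\}$, where $|x|$ is bounded and the unweighted Calder\'on--Zygmund bound suffices, and its exterior, where the Newtonian representation gives the pointwise decay $|\nabla^2\Delta^{-1}F(x)|\leq C\|F\|_{L^1}|x|^{-3}$ and hence $\int_{|x|>2(R_0+4)}|x|^q|\nabla^2\Delta^{-1}F|^q\,dx<\infty$ exactly when $q>3/2$. The hard part is precisely the double appearance of the threshold $q>3/2$: once as the $A_q$ condition for the Riesz transforms against the weight $|x|^q$, and once as the tail integrability of the Newtonian potential. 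Everything else is cut-off bookkeeping and a careful choice of additive constant in $p$.
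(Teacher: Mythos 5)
Your proposal is correct and follows essentially the same route as the paper: Helmholtz reduction to a weighted $\nabla^2$-estimate for the Neumann pressure, a cut-off near the obstacle, bounded-domain elliptic regularity for the near piece, and weighted Riesz-transform boundedness (the Muckenhoupt weight $|x|^q\in{\cal A}_q(\mathbb R^3)$ precisely when $q>3/2$) for the far piece. The only divergence is in the far-field step: the paper applies the weighted Riesz bound $\||x|({\cal R}\otimes{\cal R})h\|_q\leq C\||x|h\|_q$ directly to the entire right-hand side $h=(1-\phi)(\mbox{div }g)+2\nabla\phi\cdot\nabla w+(\Delta\phi)w$, which absorbs the compactly supported commutator terms at once, whereas you peel off the remainder $F$ and control it by a separate Newtonian-potential decay argument — correct, but a detour that makes the threshold $q>3/2$ appear twice where the paper needs it only once.
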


\begin{proof}
Consider the Neumann problem
\[
-\Delta w=\mbox{div $g$}\quad\mbox{in $D$}, \qquad
\frac{\partial w}{\partial\nu}\Big|_{\partial D}
=-\nu\cdot g|_{\partial D}=0,
\]
where $\nu$ stands for the outer unit normal to $\partial D$.
It then suffices to show
\begin{equation}
\||x|\nabla^2 w\|_q
\leq C(\||x|(\mbox{div $g$})\|_q+\|\mbox{div $g$}\|_q +\|g\|_q)
\label{neumann}
\end{equation}
which implies \eqref{weighted} since
$Pg=g+\nabla w$.
We fix $L\in (R_0,\infty)$ and take a cut-off function
$\phi\in C_0^\infty(D_L)$ such that $\phi=1$ in $B_{R_0}$,
where $R_0$ is as in \eqref{obstacle}.
We choose a solution $w$ satisfying
$\int_{D_L}w\,dx=0$, so that
\begin{equation}
\|w\|_{q,D_L}\leq C\|\nabla w\|_{q,D_L}
\leq C\|\nabla w\|_q
\leq C\|g\|_q
\label{neumann-est}
\end{equation}
where the last inequality is due to \cite{Mi}, \cite{SiS}.
Then $\phi w$ obeys
\[
-\Delta(\phi w)
=\phi(\mbox{div $g$})-2\nabla\phi\cdot\nabla w-(\Delta\phi)w \quad
\mbox{in $D_L$}, \qquad
\nu\cdot\nabla(\phi w)|_{\partial D_L}=0,
\]
which leads to
\begin{equation}
\|\nabla^2(\phi w)\|_{q,D_L}
\leq C\|\mbox{div $g$}\|_q+C\|w\|_{W^{1,q}(D_L)},
\label{cut-bdd}
\end{equation}
where, this time, $\nu$ denotes the outer unit normal to $\partial D_L$.
On the other hand, $(1-\phi)w$ obeys
\[
-\Delta\{(1-\phi)w\}
=(1-\phi)(\mbox{div $g$})+2\nabla\phi\cdot\nabla w+(\Delta\phi)w=:h \quad
\mbox{in $\mathbb R^3$}.
\]
By ${\cal R}=\nabla(-\Delta)^{-1/2}$ we denote the Riesz transform,
then we know
\[
\||x|{\cal R}h\|_{q,\mathbb R^3}
\leq C\||x|h\|_{q,\mathbb R^3}
\]
from the Muckenhoupt theory for singular integrals as long as
$\frac{n}{n-1}=\frac{3}{2}<q<\infty$;
in fact, for such $q$, the weight
$|x|^q$ belongs to the Muckenhoupt class ${\cal A}_q(\mathbb R^3)$,
see Farwig and Sohr \cite[Section 2]{FaS},
Stein \cite[Chapter V]{St},
Torchinsky \cite[Chapter IX]{To} for details.
We thus obtain
\begin{equation}
\begin{split}
\||x|\nabla^2\{(1-\phi)w\}\|_{q,\mathbb R^3}
&=\||x|({\cal R}\otimes{\cal R})h\|_{q,\mathbb R^3}
\leq C\||x|h\|_{q,\mathbb R^3}  \\
&\leq C\||x|(\mbox{div $g$})\|_q+C\|w\|_{W^{1,q}(D_L)}
\end{split}
\label{cut-wh}
\end{equation}
for $3/2<q<\infty$.
We collect \eqref{neumann-est}, \eqref{cut-bdd} and \eqref{cut-wh}
to conclude \eqref{neumann}.
\end{proof}

Since the functions being in our class $Z_q(D)$ do not satisfy
the Dirichlet boundary condition, we have to replace
\cite[(5.4)]{HR14} by \eqref{est-R} of the following lemma.
The smoothing rate $(t-s)^{-1+\delta}$ below stems from \eqref{2nd-V} 
for the interior problem.
\begin{lemma}
Suppose that $\eta$ and $\omega$ fulfill \eqref{rigid} for some 
$\theta\in (0,1)$.
Let $1<q<\infty$ and $\delta\in (0,1/2q)$.
Given $\tau_*\in (0,\infty)$ and $m\in (0,\infty)$, let
$\Lambda(\tau_*)$ be as in \eqref{para-set}
and assume \eqref{rigid-bound}.

\begin{enumerate}
\item
There is a constant $C=C(\tau_*,m,q,\delta,\theta,D)>0$
such that, for every $f\in Z_q(D)$ and $t\in (s,\infty)$,
we have $W(t,s)f\in Y_q(D)$ subject to
\begin{equation}
\|W(t,s)f\|_{Y_q(D)}\leq C(t-s)^{-1+\delta}\|f\|_{Z_q(D)}
\label{est-YZ}
\end{equation}
for all $(t,s)\in\Lambda(\tau_*)$.

\item
There is a constant $C=C(\tau_*,m,q,\delta,\theta,D)>0$
such that
\begin{equation}
\|K(t,s)f\|_{W^{1,q}(D)}\leq C(t-s)^{-1+\delta}\|f\|_{Z_q(D)}
\label{est-R}
\end{equation}
for all $(t,s)\in\Lambda(\tau_*)$ and $f\in Z_q(D)$.
If in particular $q\in (3/2,\infty)$, then there is a constant
$C=C(\tau_*,m,q,\delta,\theta,D)>0$ such that
\begin{equation}
\|PK(t,s)f\|_{Z_q(D)}\leq C(t-s)^{-1+\delta}\|f\|_{Z_q(D)}
\label{est-PR}
\end{equation}
for all $(t,s)\in\Lambda(\tau_*)$ and $f\in Z_q(D)$.
\end{enumerate}
\label{nochmalHR}
\end{lemma}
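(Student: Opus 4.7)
The plan is to unravel each piece of $W(t,s)f$ and $K(t,s)f$ according to their definitions \eqref{1st-appro} and \eqref{remain}, and then estimate term by term using the whole space estimates of Lemma \ref{whole-basic} (especially \eqref{est-YZ-whole}), the interior estimates of Lemma \ref{no-bc} (especially \eqref{2nd-V} and \eqref{pre-0}), and the Bogovskii bounds \eqref{bog-est-1}. The rate $(t-s)^{-1+\delta}$ is dictated by the interior problem for $V(t,s)f_1$: since $f_1$ as constructed does not satisfy the homogeneous Dirichlet condition at $\partial D_{R_0+6}$, the sharper rate $(t-s)^{-1/2}$ of \cite{HR14} is not available and one must use \eqref{2nd-V}, \eqref{pre-0}, which furnish only the weaker rate.

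For the first assertion I would argue as follows. Applying \eqref{est-YZ-whole} to $f_0\in Z_q(\mathbb R^3)$ gives $\|U(t,s)f_0\|_{Y_q(\mathbb R^3)}\leq C(t-s)^{-1/2}\|f\|_{Z_q(D)}$, hence $\|(1-\phi)U(t,s)f_0\|_{Y_q(D)}\leq C(t-s)^{-1/2}\|f\|_{Z_q(D)}$; this is already better than $(t-s)^{-1+\delta}$. For $\phi V(t,s)f_1$, since $\phi$ is compactly supported and $f_1\in L^q_\sigma(D_{R_0+6})\cap W^{1,q}(D_{R_0+6})$ with $\|f_1\|_{W^{1,q}}\leq C\|f\|_{Z_q(D)}$, the estimate \eqref{2nd-V} controls the $W^{2,q}$-norm by $C(t-s)^{-1+\delta}\|f\|_{Z_q(D)}$, and the weighted seminorm $\||x|\nabla(\phi Vf_1)\|_q$ is dominated by the $W^{1,q}$-norm on the support of $\phi$. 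Finally, for the Bogovskii correction $\mathbb B[(Uf_0-Vf_1)\cdot\nabla\phi]$, \eqref{bog-est-1} with $k=1$ gives $W^{2,q}$ control in terms of $\|\nabla[(Uf_0-Vf_1)\cdot\nabla\phi]\|_{q,A}$, which by \eqref{LqLr-whole} and \eqref{1st-V0} behaves like $(t-s)^{-1/2+\delta}$; compact support in $A$ again handles the weighted term. Summing gives \eqref{est-YZ}.

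For the second assertion, I treat the seven groups of terms in \eqref{remain} separately. The dangerous contributions, all giving the rate $(t-s)^{-1+\delta}$, are: (i) $\nabla\phi\cdot\nabla(Uf_0-Vf_1)$, where the $W^{1,q}$-norm requires $\|V(t,s)f_1\|_{W^{2,q}}$ controlled via \eqref{2nd-V}; (ii) $(\nabla\phi)p_1$, controlled via \eqref{pre-0} and \eqref{2nd-V} for $\|\nabla p_1\|_q$; (iii) $\mathbb B[(\partial_t Uf_0-\partial_t Vf_1)\cdot\nabla\phi]$, whose $W^{1,q}$-norm is reduced by \eqref{bog-est-1} (with $k=0$) to $\|\partial_t Vf_1\|_{q,A}$, again bounded by $(t-s)^{-1+\delta}$ through \eqref{2nd-V}, while $\|\partial_t Uf_0\|_q$ is $(t-s)^{-1/2}$ from \eqref{est-YZ-whole}; and (iv) $\Delta\mathbb B[(Uf_0-Vf_1)\cdot\nabla\phi]$, for which \eqref{bog-est-1} with $k=2$ together with \eqref{2nd-V} yields the same rate. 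The remaining terms involving zeroth order operators or $(\eta+\omega\times x)\cdot\nabla$ applied to $\mathbb B[\cdots]$ are absorbed by the factor $|(\eta,\omega)|_0\leq m$ on the bounded set $A$ and give strictly better rates. Collecting the bounds gives \eqref{est-R}.

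Finally, for \eqref{est-PR} under $q\in(3/2,\infty)$, the key observation is that $K(t,s)f$ is supported in $A=\{R_0+2<|x|<R_0+4\}$, which sits strictly inside $D$ and is bounded; hence $K(t,s)f$ extends by zero to an element of $W^{1,q}_0(D)^3$ with $|x|\nabla K$ automatically in $L^q(D)$ and comparable to $\|\nabla K\|_q$. The $L^q$- and $W^{1,q}$-boundedness of $P$ immediately give $\|PK\|_{W^{1,q}(D)}\leq C\|K\|_{W^{1,q}(D)}$, and Lemma \ref{proj} applied with $g=K(t,s)f$ yields
\[
\||x|\nabla PK(t,s)f\|_q\leq C\bigl(\||x|\nabla K(t,s)f\|_q+\|\mathrm{div}\,K(t,s)f\|_q+\|K(t,s)f\|_q\bigr)\leq C\|K(t,s)f\|_{W^{1,q}(D)},
\]
where the compact support of $K$ was used for the first summand. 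Combining with \eqref{est-R} furnishes \eqref{est-PR}. The main obstacle is precisely the need to avoid the boundary-trace hypothesis on $f_1$ at $\partial D_{R_0+6}$: this forces the weaker $(t-s)^{-1+\delta}$ smoothing of Lemma \ref{no-bc}, and motivates working in $Z_q(D)$ rather than $Z_{q,0}(D)$, which in turn requires the weighted Helmholtz projection bound (Lemma \ref{proj}) and the restriction $q>3/2$ for the last inclusion.
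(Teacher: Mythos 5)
Your proposal is correct and follows essentially the same line as the paper's (very terse) proof: decompose $W(t,s)f$ and $K(t,s)f$ into their constituent pieces, apply the whole-space estimates from Lemma~\ref{whole-basic} (in particular \eqref{est-YZ-whole}), the interior estimates from Lemma~\ref{no-bc} (in particular \eqref{2nd-V} and \eqref{pre-0}), and the Bogovskii bounds \eqref{bog-est-1}, then invoke the weighted projection estimate of Lemma~\ref{proj} for \eqref{est-PR}. The paper compresses this into a single sentence of citations plus one line for \eqref{est-PR}; your term-by-term bookkeeping and the explanation of how the compact support of $K(t,s)f$ in $\overline A$ handles both the membership $K(t,s)f\in W^{1,q}_0(D)$ and the weighted seminorm $\||x|\nabla K(t,s)f\|_q\lesssim\|\nabla K(t,s)f\|_q$ correctly fill in what the authors leave implicit.
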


\begin{proof}
We collect \eqref{bog-est-1}, \eqref{LqLr-whole},
\eqref{est-YZ-whole}, \eqref{LqLr-bdd}, \eqref{2nd-V},
\eqref{pre-0} and \eqref{data}
to obtain \eqref{est-YZ} and \eqref{est-R}.
Since $K(t,s)f\in W^{1,q}_0(D)$ with
$|x|\nabla K(t,s)f\in L^q(D)$,
one can use \eqref{weighted} to obtain \eqref{est-PR}.
\end{proof}

\noindent
{\it Proof of the second assertion of Proposition \ref{basic}}.
Let $3/2<q<\infty$.
In view of \eqref{iteration}, \eqref{est-YZ} and \eqref{est-PR} one can apply 
Lemma \ref{convo} with
\[
A_0=W, \quad Q=PK, \quad
X_1=Z_q(D), \quad X_2=Y_q(D), \quad 
\alpha=\beta=1-\delta
\]
to see that
$T(t,s)f\in Y_q(D)$ with
\begin{equation}
\|T(t,s)f\|_{Y_q(D)}\leq C(t-s)^{-1+\delta}\|f\|_{Z_q(D)}
\label{T-YZ}
\end{equation}
for all $(t,s)\in\Lambda(\tau_*)$ and $f\in Z_q(D)$.
Note that \cite[(5.9)]{HR14} is now replaced by \eqref{T-YZ}.
The proof of the other parts by \cite{HR14} is correct
and there is no need to repeat it.
Here, the assertion has been proved under the condition \eqref{rigid}
in order to deduce all the estimates with constants uniformly in
$(t,s)\in\Lambda(\tau_*)$;
in fact, such estimates are needed for Proposition \ref{ann-18}.
But one can show Proposition \ref{basic} under the same condition
\eqref{loc-hoelder} as in \cite{HR14} subject to the corresponding estimates
for $0\leq s<t\leq {\cal T}$, where ${\cal T}\in (0,\infty)$
is arbitrarily fixed.
\hfill
$\Box$
\medskip

The following lemma on smoothing effect in the framework of the space 
$Z_q(D)$ is needed in the proof
of Proposition \ref{1st-LED}.
\begin{lemma}
Suppose that $\eta$ and $\omega$ fulfill \eqref{rigid} for some $\theta\in (0,1)$.
Let $3/2<q\leq r<\infty$.
For every $f\in Z_q(D)$ and $t\in (s,\infty)$, we have
$T(t,s)f\in Z_r(D)$.
\label{smoothing-Z}
\end{lemma}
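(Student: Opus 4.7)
The plan is to iterate the parametrix identity \eqref{sum} in the spirit of Lemma \ref{convo}, working in the scale $Z_r(D)$ rather than in $L^q_\sigma(D)$, and then to cover arbitrary $t > s$ via the semigroup property.

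First, I would verify that $W(t,s)$ sends $Z_q(D)$ into $Z_r(D)$ for $(t,s) \in \Lambda(\tau_*)$. From the three-term decomposition $W(t,s)f = (1-\phi)U(t,s)f_0 + \phi V(t,s)f_1 + \mathbb{B}\bigl[(U(t,s)f_0 - V(t,s)f_1)\cdot\nabla\phi\bigr]$ together with \eqref{data}, the unbounded-support piece $(1-\phi)U(t,s)f_0$ is controlled by \eqref{est-Z-whole} applied to $f_0 \in Z_q(\mathbb R^3)$; the interior piece $\phi V(t,s)f_1$ is compactly supported in $\overline{D_{R_0+4}}$, so the weight $|x|$ is harmless and the $W^{1,r}$-estimate \eqref{1st-V} applies (note that $f_1$ in general fails the homogeneous Dirichlet condition on $\partial D$, which is exactly why the space $Z_q(D)$ in this paper does not include a boundary condition); and the Bogovskii remainder, supported in $A$, is estimated in $W^{1,r}(A)$ via \eqref{bog-est-1} combined with \eqref{LqLr-whole} and \eqref{LqLr-bdd}. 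The net outcome is
\[
\|W(t,s)f\|_{Z_r(D)} \leq C(t-s)^{-\alpha}\|f\|_{Z_q(D)}, \qquad \alpha = \tfrac{1}{2}\bigl(\tfrac{3}{q}-\tfrac{3}{r}\bigr) + \tfrac{1}{2} - \delta,
\]
for $(t,s) \in \Lambda(\tau_*)$ and any fixed $\delta \in (0, 1/2q)$, the interior piece being the bottleneck.

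Next, combining this with \eqref{est-PR}, which gives $PK(t,s) \in \mathcal{L}(Z_q(D))$ at rate $(t-s)^{-1+\delta}$, I would apply Lemma \ref{convo} with $X_1 = Z_q(D)$, $X_2 = Z_r(D)$, $A_0 = W$, $Q = PK$, $\alpha$ as above and $\beta = 1-\delta$. This produces $T(t,s) \in \mathcal{L}(Z_q(D), Z_r(D))$ on $\Lambda(\tau_*)$ provided $\alpha < 1$. Taking $\delta$ close to $1/2q$, the condition $\alpha < 1$ is equivalent to $r < 3q/(2-q)$ when $q < 2$, and is automatic for $q \geq 2$. Since $q > 3/2$ implies $3q/(2-q) > 9$ throughout the range $q \in (3/2,2)$, I would choose an auxiliary exponent $r_1 \in [2,\, 3q/(2-q))$ and run the iteration twice: first $Z_q(D) \to Z_{r_1}(D)$, then (because $r_1 \geq 2$) $Z_{r_1}(D) \to Z_r(D)$ for any finite $r$; the two are linked via the semigroup identity $T(t,s) = T\bigl(t,(t+s)/2\bigr)\,T\bigl((t+s)/2,s\bigr)$, valid within $\Lambda(\tau_*)$.

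Finally, for $t - s > \tau_*$, I would partition $[s,t]$ into finitely many subintervals of length at most $\tau_*$ and compose using Proposition \ref{basic} (second assertion), which supplies $T(s_{k+1},s_k) \in \mathcal{L}(Z_q(D),Y_q(D)) \subset \mathcal{L}(Z_q(D))$ at each interior step, thereby reducing the claim to the small-time case already settled. The main obstacle is the factor $(t-s)^{-1/2+\delta}$ in \eqref{1st-V}, which is forced on us because $f_1$ need not satisfy the homogeneous boundary condition on $\partial D$; this deterioration prevents a single pass of Lemma \ref{convo} from reaching every finite $r$ when $q$ is near $3/2$, and is the precise reason one must bootstrap through an intermediate $r_1 \geq 2$.
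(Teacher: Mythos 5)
Your proof is essentially identical to the paper's: establish $\|W(t,s)f\|_{Z_r(D)} \leq C(t-s)^{-(3/q-3/r)/2-1/2+\delta}\|f\|_{Z_q(D)}$ from \eqref{est-Z-whole}, \eqref{1st-V}, \eqref{bog-est-1} and \eqref{data}, apply Lemma \ref{convo} together with \eqref{est-PR} on the $Z$-scale, observe that $\alpha < 1$ forces a restriction on $r$ when $3/2 < q < 2$, and bootstrap through an intermediate exponent (the paper simply takes $r_1 = 2$) via the semigroup property. Your added remark about partitioning $[s,t]$ for $t-s>\tau_*$ is harmless but unnecessary, since $\tau_*$ is arbitrary.
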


\begin{proof}
Let $\tau_*\in (0,\infty)$ and $\delta\in (0,1/2q)$.
By \eqref{bog-est-1}, \eqref{est-Z-whole} and \eqref{1st-V} 
together with \eqref{data} we find
\[
\|W(t,s)f\|_{Z_r(D)}\leq C(t-s)^{-(3/q-3/r)/2-1/2+\delta}\|f\|_{Z_q(D)}
\]
for all $(t,s)\in\Lambda(\tau_*)$ and $f\in Z_q(D)$ even if
$1<q\leq r<\infty$.
By virtue of this combined with \eqref{est-PR}, we apply Lemma \ref{convo} with
\begin{equation*}
\begin{split}
&A_0=W, \quad Q=PK, \quad
X_1=Z_q(D), \quad X_2=Z_r(D), \\
&\alpha=\frac{3}{2}\left(\frac{1}{q}-\frac{1}{r}\right)+\frac{1}{2}-\delta,
\quad \beta=1-\delta
\end{split}
\end{equation*}
to get the conclusion subject to
\[
\|T(t,s)f\|_{Z_r(D)}\leq C(t-s)^{-\alpha}\|f\|_{Z_q(D)}
\]
for all $(t,s)\in\Lambda(\tau_*)$ and $f\in Z_q(D)$ provided
$\alpha<1$ as well as $q\in (3/2,\infty)$.
The condition $\alpha <1$ with some $\delta\in (0,1/2q)$ is always
accomplished
for every $r\in [q,\infty)$ when $q\geq 2$.
Otherwise ($3/2<q<2$), one needs a restriction that $r$ is not too large.
In this latter case, $T(t,s)f\in Z_r(D)$ for $r\in (q,2]$ is
always possible and then we have only to use the semigroup property to obtain
$T(t,s)f\in Z_r(D)$ even for $r\in (2,\infty)$ as follows:
\begin{equation*}
\begin{split}
\|T(t,s)f\|_{Z_r(D)}
&\leq C(t-s)^{-(3/2-3/r)/2-1/2+\widetilde\delta}
\|T((t+s)/2,s)f\|_{Z_2(D)}  \\
&\leq C(t-s)^{-(3/q-3/r)/2-1+\widetilde\delta+\delta}\|f\|_{Z_q(D)}
\end{split}
\end{equation*}
for all $(t,s)\in\Lambda(\tau_*)$ and $f\in Z_q(D)$,
where $\max\{1/4-3/2r,\,0\}<\widetilde\delta<1/4$ and $\delta\in (0,1/2q)$.
The proof is complete.
\end{proof}

The following result justifies the derivative 
with respect to time variable with values in
$W^{-1,q}(D_R)$ for general data being in $L^q_\sigma(D)$.
This is indeed a key observation in the present paper and
can be regarded as a substitution of \cite[Theorem 5.1]{HiS}
for autonomous case.
Here, a bounded domain $D_R$ can be independent of $D_{R_0+6}$ in which the 
solution
$V(t,s)f_1$ was found in constructing the parametrix \eqref{1st-appro}.
\begin{proposition}
Suppose that $\eta$ and $\omega$ fulfill \eqref{rigid} for some 
$\theta\in (0,1)$.
Let $1<q<\infty$ and $R\in (R_0+1,\infty)$,
where $R_0$ is as in \eqref{obstacle}.
Given $f\in L^q_\sigma(D)$, we set $u(t)=T(t,s)f$.
Given $\tau_*\in (0,\infty)$ and $m\in (0,\infty)$, let
$\Lambda(\tau_*)$ be as in \eqref{para-set}
and assume
\eqref{rigid-bound}.

\begin{enumerate}
\item
There is a constant $C=C(\tau_*,m,q,R,\theta,D)>0$ such that
\begin{equation}
u\in C^1((s,\infty); W^{-1,q}(D_R)),
\label{weak-deri}
\end{equation}
\begin{equation}
\|\partial_tT(t,s)f\|_{W^{-1,q}(D_R)}\leq C(t-s)^{-(1+1/q)/2}\|f\|_q
\label{weak-est}
\end{equation}
for all $(t,s)\in\Lambda(\tau_*)$ and $f\in L^q_\sigma(D)$.
Furthermore, we have the pressure $p(t)$ subject to
$\int_{D_R}p\,dx=0$ such that the pair of $\{u,p\}$ satisfies
\begin{equation}
\begin{split}
\langle\partial_tu, \psi\rangle_{D_R}
+\langle \nabla u+u\otimes (\eta+\omega\times x)
&-(\omega\times x)\otimes u, \nabla\psi\rangle_{D_R}   \\
&-\langle p, \mbox{\rm{div} $\psi$}\rangle_{D_R}
=0
\end{split}
\label{weak-eqn}
\end{equation}
for all $t\in (s,\infty)$ and $\psi\in W^{1,q^\prime}_0(D_R)^3$, that
\begin{equation}
\|p(t)\|_{q,D_R}
\leq C\|\partial_tu(t)\|_{W^{-1,q}(D_R)}
+C\|u(t)\|_{W^{1,q}(D_R)}
\label{pressure-cont}
\end{equation}
for all $t\in (s,\infty)$
with a constant $C=C(m,q,R,D)>0$
and that
\begin{equation}
\|p(t)\|_{q,D_R}\leq C(t-s)^{-(1+1/q)/2}\|f\|_q
\label{pressure-reg}
\end{equation}
for all $(t,s)\in\Lambda(\tau_*)$
with a constant
$C=C(\tau_*,m,q,R,\theta,D)>0$,
where both constants above are independent of
$f\in L^q_\sigma(D)$.

\item
If in particular $q\in (3/2,\infty)$ and $f\in Z_q(D)$, then
there is a constant $C=C(\tau_*,m,q,R,\theta,D)>0$ such that
\begin{equation}
\|L_+(t)T(t,s)f\|_{W^{-1,q}(D_R)}
\leq C(t-s)^{-(1+1/q)/2}\|f\|_q
\label{weak-est2}
\end{equation}
for all $(t,s)\in\Lambda(\tau_*)$.
\end{enumerate}
\label{weak}
\end{proposition}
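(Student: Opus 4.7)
The core idea is to differentiate the parametrix identity \eqref{int-eq} termwise and then apply Lemma \ref{convo} in the dual Sobolev framework $W^{-1,q}(D_R)$. For smooth initial data $f\in C^{\infty}_{0,\sigma}(D)$, the second assertion of Proposition \ref{basic} (applied in some $L^{q_0}_\sigma(D)$ with $q_0>3/2$, which contains $f$) produces a strong solution $u=T(t,s)f$ with an associated pressure $p$, which we normalize by $\int_{D_R}p(t)\,dx=0$. Rewriting the drift and Coriolis terms in divergence form (as noted in Section \ref{result}) and integrating by parts against $\psi\in C^{\infty}_{0}(D_R)^3$ yields the weak equation \eqref{weak-eqn}.

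The heart of the argument is the base estimate
\begin{equation*}
\|\partial_tW(t,s)f\|_{W^{-1,q}(D_R)}\leq C(t-s)^{-(1+1/q)/2}\|f\|_q
\end{equation*}
for the parametrix \eqref{1st-appro}. Differentiating term by term, the whole-space piece $(1-\phi)\partial_tU(t,s)f_0$ is bounded in $W^{-1,q}(B_R)$ by $C(t-s)^{-1/2}\|f\|_q$ via Lemma \ref{weak-whole}, a smoother rate, while the interior piece $\phi\,\partial_tV(t,s)f_1$ gives the critical rate $(t-s)^{-(1+1/q)/2}\|f\|_q$ by Lemma \ref{basic-bdd}. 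For the Bogovskii correction $\mathbb{B}\bigl[(\partial_tUf_0-\partial_tVf_1)\cdot\nabla\phi\bigr]$ one applies the Geissert--Heck--Hieber bound \eqref{bog-est-2}, which reduces matters to controlling the bracketed object in $W^{1,q'}(A)^*$; since $\nabla\phi$ is smooth and compactly supported in the interior of $A$, the pairing $\psi\mapsto\langle v,\nabla\phi\cdot\psi\rangle$ (with $\nabla\phi\cdot\psi\in W^{1,q'}_0(A)$) shows that multiplication by $\nabla\phi$ maps $W^{-1,q}(A)$ boundedly into $W^{1,q'}(A)^*$, so the rate is inherited from $\partial_tUf_0$ and $\partial_tVf_1$.

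Next, differentiating \eqref{int-eq} termwise along the iteration scheme \eqref{iteration} (justified for smooth data by $T(\cdot,s)f\in C^1((s,\infty);L^{q_0}_\sigma(D))$) yields the Volterra identity
\begin{equation*}
\partial_tT(t,s)f=\bigl[\partial_tW(t,s)f+PK(t,s)f\bigr]+\int_s^t\partial_tT(t,\tau)\,PK(\tau,s)f\,d\tau,
\end{equation*}
where the boundary contribution $PK(t,s)f$ arises from $W(t,t)=I$. Combined with $\|PK(\tau,s)\|_{{\cal L}(L^q_\sigma(D))}\leq C(\tau-s)^{-(1+1/q)/2}$ from \eqref{est-R0}, Lemma \ref{convo} applies with $X_1=L^q_\sigma(D)$, $X_2=W^{-1,q}(D_R)$, $A_0=\partial_tW+PK$, $Q=PK$ and $\alpha=\beta=(1+1/q)/2<1$ to produce \eqref{weak-est}. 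The pressure estimates now fall out of the weak equation by duality: testing \eqref{weak-eqn} against $\psi=\mathbb{B}_{D_R}[\phi]$ for $\phi\in L^{q'}(D_R)$ of zero mean produces \eqref{pressure-cont} through \eqref{bog-est-1}, and combining this with \eqref{weak-est} and the gradient bound $\|T(t,s)f\|_{W^{1,q}(D_R)}\leq C(t-s)^{-1/2}\|f\|_q$ from Proposition \ref{ann-18} yields \eqref{pressure-reg}.

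A density argument extends everything from smooth data to $f\in L^q_\sigma(D)$: linearity of $T(t,s)$ together with the uniform estimates and the $L^q$-convergence from Proposition \ref{ann-18} yield convergence of $\partial_tT(t,s)f_n$ in $W^{-1,q}(D_R)$ uniformly on compact subintervals of $(s,\infty)$, which furnishes \eqref{weak-deri} from the corresponding $C^1$ regularity for smooth data. The second assertion is immediate from the evolution equation \eqref{evo-eqn}: for $f\in Z_q(D)$ with $q>3/2$, the identity $L_+(t)T(t,s)f=-\partial_tT(t,s)f$ holds in $L^q_\sigma(D)\hookrightarrow W^{-1,q}(D_R)$, so \eqref{weak-est2} follows from \eqref{weak-est}. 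The main obstacle I anticipate is the careful verification of the $W^{-1,q}(D_R)$ bound on the Bogovskii piece of $\partial_tW$, since $\partial_tUf_0$ and $\partial_tVf_1$ naturally live only in $W^{-1,q}(A)$ rather than in the smaller space $W^{1,q'}(A)^*$ that the estimate \eqref{bog-est-2} requires; the inclusion used above is elementary but must be invoked with precisely the right test-function class.
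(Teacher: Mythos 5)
Your proposal reproduces the structure of the paper's proof almost exactly: the base estimate on $\partial_tW(t,s)f$ in $W^{-1,q}(D_R)$ split into the whole-space, bounded-domain and Bogov\-ski\u{\i} contributions (with the interior piece $\phi\,\partial_tV(t,s)f_1$ supplying the critical rate $(t-s)^{-(1+1/q)/2}$ via \eqref{weak-bdd}), the iterated convolution argument of Lemma~\ref{convo} with $\alpha=\beta=(1+1/q)/2$, the pressure estimate \eqref{pressure-cont} via duality against Bogov\-ski\u{\i} test fields, and the reduction of the second assertion to \eqref{evo-eqn}. Your treatment of the Bogov\-ski\u{\i} term is correct: since $\nabla\phi$ has compact support in the interior of the annulus $A$, multiplication by $\nabla\phi$ does map $W^{-1,q}(A)$ into $W^{1,q'}(A)^*$, so \eqref{bog-est-2} is applicable, and this is precisely what the paper uses in the step $\partial_t\mathbb B[(U(t,s)f)\cdot\nabla\phi]=\mathbb B[(\partial_tU(t,s)f)\cdot\nabla\phi]$.

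The one place where you are imprecise is the justification of the ``Volterra identity.'' Lemma~\ref{convo} bounds the series $\sum_jA_j(t,s)f$; to conclude \eqref{weak-est}--\eqref{weak-deri} one must \emph{identify} $\partial_tT(t,s)f$ with this series in $W^{-1,q}(D_R)$, and one must know a priori that each $T_j(\cdot,s)f$ belongs to $C^1((s,\infty);W^{-1,q}(D_R))$ so that the identity $\partial_tT_{j+1}=\int_s^t\partial_tT_j(\cdot,\tau)PK(\tau,s)f\,d\tau$ (plus the boundary term $PK(t,s)f$ for $j=0$) even makes sense. Your parenthetical ``(justified for smooth data by $T(\cdot,s)f\in C^1((s,\infty);L^{q_0}_\sigma(D))$)'' does not supply this: knowing that $T(\cdot,s)f$ is differentiable in $L^{q_0}_\sigma(D)$ does not by itself license the Leibniz rule on $\int_s^tT(t,\tau)PK(\tau,s)f\,d\tau$, because $PK(\tau,s)f$ is only in $L^q_\sigma(D)$ rather than $Z_q(D)$, and differentiability of the \emph{integrand} is what is at issue. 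The paper is explicit that the Lemma~\ref{convo} picture is only formal, and instead proves by induction that each $T_j(\cdot,s)f\in C^1((s,\infty);W^{-1,q}(D_R))$ with $\|\partial_tT_j(t,s)f\|_{W^{-1,q}(D_R)}\le\mu_j(t-s)^{-(1+1/q)/2}\|f\|_q$, the constants $\mu_j$ being summable, so that $\sum_j\partial_tT_j$ converges uniformly on compacta and the termwise differentiation is legitimate for all $f\in L^q_\sigma(D)$ directly, without invoking smooth data at all. Your closing density argument is itself correct and would also work, but it is doing a job the inductive argument already accomplishes; and in any case it cannot replace the inductive step, which is needed even to make sense of the iteration for the derivatives. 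Finally, you do not address the paper's patching construction that produces a single pressure $p$ consistent across all $D_{R+k}$, but since the Proposition only asserts the weak equation and estimates over the fixed $D_R$, this omission is harmless.
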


\begin{proof}
From Lemma \ref{weak-whole}, \eqref{cls-bdd} and Lemma \ref{basic-bdd} 
we infer that
\[
W(\cdot,s)f\in C^1((s,\infty); W^{-1,q}(D_R))
\]
with
\[
\|\partial_tW(t,s)f\|_{W^{-1,q}(D_R)}\leq C(t-s)^{-(1+1/q)/2}\|f\|_q
\]
for all $(t,s)\in\Lambda(\tau_*)$ and $f\in L^q_\sigma(D)$.
Here, notice that
\[
\partial_t\mathbb B[(U(t,s)f)\cdot\nabla\phi]
=\mathbb B[(\partial_tU(t,s)f)\cdot\nabla\phi]
\]
holds even in $L^q(D_R)$, which follows from \eqref{bog-est-2}
and \eqref{weak-deri-wh}.
Starting from $W(t,s)f$ together with \eqref{est-R0},
we use \eqref{iteration} to show by induction that
\[
T_j(\cdot,s)\in C^1((s,\infty); W^{-1,q}(D_R))
\]
for every $j$ with
\begin{equation*}
\begin{split}
&\partial_tT_0(t,s)f=\partial_tW(t,s)f, \\
&\partial_tT_1(t,s)f=PK(t,s)f+\int_s^t\partial_tW(t,\tau)PK(\tau,s)f\,d\tau, \\
&\partial_tT_{j+1}(t,s)f=\int_s^t\partial_tT_j(t,\tau)PK(\tau,s)f\,d\tau
\qquad (j=1,2,\cdots).
\end{split}
\end{equation*}
and that
\begin{equation}
\|\partial_tT_j(t,s)f\|_{W^{-1,q}(D_R)}\leq \mu_j(t-s)^{-(1+1/q)/2}\|f\|_q
\label{est-Tj-deri}
\end{equation}
for all $(t,s)\in\Lambda(\tau_*)$ and $f\in L^q_\sigma(D)$
with
\[
\mu_j=\mu_j(\tau_*,m,q,R,\theta,D)
=\frac{c_0c_1^j}{\Gamma((1-\alpha)j)} \qquad (j=1,2,\cdots)
\]
where $\alpha=(1+1/q)/2$, $\Gamma(\cdot)$ denotes the Gamma function,
and positive constants $c_0,\, c_1$ are independent of $j$,
so that
$\sum_{j=1}^\infty\mu_j<\infty$.
This can be verified along the same way as in the proof of Lemma \ref{convo},
see \cite[Lemma 3.3]{HR11}, \cite[Chapter 5, Section 2]{T}.
Hence, for each $s\geq 0$, the series
$\sum_{j=0}^\infty \partial_tT_j(t,s)f$
converges in $W^{-1,q}(D_R)$
uniformly with respect to $t\in [s+\varepsilon, s+\tau_*]$
for every $\varepsilon\in (0,\tau_*)$.
We thus conclude \eqref{weak-deri} with
\[
\partial_tT(t,s)f=\sum_{j=0}^\infty \partial_tT_j(t,s)f
\]
in $W^{-1,q}(D_R)$, which yields \eqref{weak-est}.
This combined with the second assertion of Proposition \ref{basic}
implies \eqref{weak-est2} as well.
Formally, the result obtained here is observed by applying 
Lemma \ref{convo} with
\begin{equation*}
\begin{split}
&A_0=\partial_tT_1, \quad Q=PK, \quad
X_1=L^q_\sigma(D),\quad  X_2=W^{-1,q}(D_R), \\
&\alpha=\beta=\frac{1}{2}\left(1+\frac{1}{q}\right),
\end{split}
\end{equation*}
however, the differentiability of $T_j(t,s)f$ with respect to $t$
is verified simultaneously with \eqref{est-Tj-deri}; thus, we should 
take the way explained above.

Suppose $f\in C_{0,\sigma}^\infty(D)$ and set $u(t)=T(t,s)f$.
By $p(t)$ we denote the associated pressure which is singled out such that
$\int_{D_R}p\,dx=0$.
Combining the equation \eqref{IVP} with
\[
\|p(t)\|_{q,D_R}\leq C\|\nabla p(t)\|_{W^{-1,q}(D_R)}
\]
(see, for instance, \cite[Remark 4.1]{Hi04} for its proof
with the aid of \eqref{bog-est-1}),
we find \eqref{pressure-cont} for $f\in C_{0,\sigma}^\infty(D)$
as well as $p\in C((s,\infty); L^q(D_R))$.
Thus, \eqref{pressure-reg} follows from
\eqref{weak-est} together with
the second assertion of Proposition \ref{ann-18}
when $f\in C_{0,\sigma}^\infty(D)$.
We next take general $f\in L^q_\sigma(D)$, then by approximation
we get the function $p_R\in C((s,\infty); L^q(D_R))$ 
which together with $u(t)=T(t,s)f$
enjoys \eqref{weak-eqn} as well as the
same estimates \eqref{pressure-cont}--\eqref{pressure-reg} 
and $\int_{D_R}p_R\,dx=0$.
In this way, for every integer $k>0$, we obtain the pressure
$p_{R+k}$ over $D_{R+k}$ satisfying $\int_{D_{R+k}}p_{R+k}\,dx=0$, however,
we see from \eqref{weak-eqn} that
\[
\langle p_{R+k}(t)-p_{R+j}(t),\, \mbox{div $\psi$}\rangle_{D_{R+j}}=0
\]
for every $\psi\in C_0^\infty(D_{R+j})^3$ and $k>j\geq 0$.
Consequently, $p_{R+k}(x,t)-p_R(x,t)=c_k(t)$ a.e.$D_R$
with some $c_k(t)$ independent of $x\in D_R$.
Let us define
\[
p(x,t)=
\left\{
\begin{array}{ll}
p_R(x,t), & x\in D_R, \\
p_{R+k}(x,t)-c_k(t), \qquad & x\in D_{R+k}\setminus D_{R+k-1}
\quad (k=1,2, \cdots),
\end{array}
\right.
\]
which is the desired pressure over $D$ satisfying
\[
p\in C((s,\infty); L^q(D_R)), \qquad
\int_{D_R}p\,dx=0
\]
as well as \eqref{weak-eqn}--\eqref{pressure-reg}
for all $f\in L^q_\sigma(D)$.
\end{proof}

Analysis in this section can be also carried out
for the evolution operator
$\widetilde T(\tau,s;t)$ generated by
the initial value problem \eqref{ivp-adj} with use of 
$\widetilde U(\tau,s;t)$ given by \eqref{evo-adj-wh} and
the corresponding evolution operator in the bounded domain $D_{R_0+6}$.
Although the latter one is not explicitly given, we do have it
by the Tanabe-Sobolevskii theory \cite[Chapter 5]{T} and it possesses the same
properties as described in Section \ref{interior}.
All the constants in several key estimates can be independent of $t$
and taken uniformly in $(\tau,s)$ with
$\tau -s\leq\tau_*$ as well as $0\leq s<\tau\leq t$.
In view of the relations \eqref{back-op} and \eqref{duality},
the corresponding results for the adjoint $T(t,s)^*$,
especially \eqref{weak-adj0} and \eqref{weak-adj} in the next section, 
are available.

\section{Local energy decay of the evolution operator}
\label{local}

In this section we deduce local energy decay estimates
of the evolution operator: Proposition \ref{1st-LED}
for initial velocity with bounded support and
Proposition \ref{2nd-LED} for general data.
The former is a step to get the latter.
In Proposition \ref{1st-LED} we have a bit less sharp rate of decay
than the desired one $(t-s)^{-3/2}$, but this does not cause any problem.
If we took the same way for general data
as in the proof of Proposition \ref{1st-LED}, we would obtain
less decay rate $(t-s)^{-3/2q+\varepsilon}$ than the one in
Proposition \ref{2nd-LED}.
This never implies Theorem \ref{main}, and thus we should take the
following way.
\begin{proposition}
Suppose that $\eta$ and $\omega$ fulfill \eqref{rigid} for some 
$\theta\in (0,1)$.
Let $R\in (R_0+1,\infty)$, where $R_0$ is as in \eqref{obstacle}. 
Let $\varepsilon >0$ be arbitrarily small.

\begin{enumerate}
\item
Let $1<q<\infty$.
For each $m\in (0,\infty)$, there is a constant
$C=C(m,\varepsilon,q,R,\theta,D)>0$ such that
\begin{equation}
\begin{split}
\|T(t,s)f\|_{W^{1,q}(D_R)}
&\leq C(t-s)^{-3/2+\varepsilon}\|f\|_q, \\
\|T(t,s)^*g\|_{W^{1,q}(D_R)}
&\leq C(t-s)^{-3/2+\varepsilon}\|g\|_q,
\end{split}
\label{1st-led1}
\end{equation}
for all $(t,s)$ with 
\[
t-s>2 \quad\mbox{as well as $0\leq s<t$}
\]
and
$f,\,g\in L^q_\sigma(D)$ with
\[
f(x)=0, \quad g(x)=0 \quad{\rm a.e.}\;\mathbb R^3\setminus B_{3R_0}
\]
whenever
\eqref{rigid-bound} is satisfied.

\item
Let $3/2<q<\infty$.
For each $m\in (0,\infty)$, there is a constant
$C=C(m,\varepsilon,q,R,\theta,D)>0$ such that
\begin{equation}
\begin{split}
\|\partial_tT(t,s)f\|_{W^{-1,q}(D_R)}
&\leq C(t-s)^{-3/2+\varepsilon}\|f\|_q, \\
\|\partial_sT(t,s)^*g\|_{W^{-1,q}(D_R)}
&\leq C(t-s)^{-3/2+\varepsilon}\|g\|_q,
\end{split}
\label{1st-led2}
\end{equation}
for all $(t,s)$ with
\[
t-s>2 \quad\mbox{as well as $0\leq s<t$}
\]
and $f,\,g\in L^q_\sigma(D)\cap W^{1,q}(D)$ with 
\[
f(x)=0, \quad g(x)=0 \quad{\rm a.e.}\;\mathbb R^3\setminus B_{3R_0}
\]
whenever \eqref{rigid-bound} is satisfied.
\end{enumerate}
\label{1st-LED}
\end{proposition}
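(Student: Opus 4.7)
The plan is to combine the global $L^q$-$L^r$ decay estimates of Proposition \ref{ann-18} with the semigroup decomposition $T(t,s)=T(t,t-1)T(t-1,s)$ (valid for $t-s>2$), using the short-time smoothing of $T(t,t-1)$ from Proposition \ref{ann-18} part 2 and Proposition \ref{weak}. The crucial point is that for $f$ supported in $B_{3R_0}$ one has $\|f\|_{q_0}\le C_{R_0}\|f\|_q$ for every $q_0\in(1,q]$, so that the decay $(t-s)^{-(3/q_0-3/r)/2}$ available from \eqref{LqLr-noch} can be pushed toward $(t-s)^{-3/2}$ by choosing $q_0$ close to $1$ and $r$ large. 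Quantitatively, taking $q_0=1+\delta$ and $r=1/\delta$ yields $(3/q_0-3/r)/2 = 3/2 - O(\delta)$, which delivers the rate $(t-s)^{-3/2+\varepsilon}$ for any prescribed $\varepsilon>0$.

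For the first inequality of part 1, I would bound $\|T(t,s)f\|_{q,D_R}\le |D_R|^{1/q-1/r}\|T(t,s)f\|_r$ and apply \eqref{LqLr-noch} with input $q_0\in(1,q]$ and output $r\ge q$. For the gradient bound, I would factor $\nabla T(t,s)f=\nabla T(t,t-1)\bigl[T(t-1,s)f\bigr]$, apply the short-time gradient estimate \eqref{LqLr-grad-noch} with $\tau_*=1$ to obtain $\|\nabla T(t,t-1)h\|_r\le C\|h\|_{q_1}$ for $q_1\le r$, and then bound $\|T(t-1,s)f\|_{q_1}$ via \eqref{LqLr-noch} starting from the compact-support norm $\|f\|_{q_0}$. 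Since $t-s-1\ge (t-s)/2$ when $t-s>2$, the constant only picks up a harmless numerical factor. The adjoint estimate follows identically from the $T(t,s)^*$-versions of \eqref{LqLr-noch} and \eqref{LqLr-grad-noch}.

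For part 2, the bounded-domain embedding $W^{-1,r}(D_R)\hookrightarrow W^{-1,q}(D_R)$ for $r\ge q$ reduces matters to controlling $\|\partial_t T(t,s)f\|_{W^{-1,r}(D_R)}$. I would write $\partial_t T(t,s)f=\partial_t T(t,t-1)\bigl[T(t-1,s)f\bigr]$ and invoke the short-time bound \eqref{weak-est} of Proposition \ref{weak}, applied over the unit interval with input norm $r$, to obtain $\|\partial_t T(t,t-1)h\|_{W^{-1,r}(D_R)}\le C\|h\|_r$. The factor $\|T(t-1,s)f\|_r$ is then handled as in part 1. The hypotheses $q>3/2$ and $f\in W^{1,q}(D)$ combined with the compact support place $f$ into $Z_q(D)$ (since $|x|\nabla f\in L^q$ is automatic), so that the second assertion of Proposition \ref{basic} and the stronger bound \eqref{weak-est2} are available and legitimize the identity $\partial_t T(t,s)f = -L_+(t)T(t,s)f$ used to interpret the time derivative if one prefers the $L_+(t)$-form. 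For $\partial_sT(t,s)^*g$, one argues identically using the counterparts of Proposition \ref{weak} for $\widetilde T(\tau,s;t)$ alluded to at the end of Section \ref{regularity}.

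The argument is structurally routine; no genuinely difficult step arises. The only care needed is bookkeeping of exponents and time intervals: one must verify that the constants from \eqref{LqLr-noch}–\eqref{LqLr-grad-noch} and \eqref{weak-est} stay uniform as one sends $q_0\downarrow 1$ and $r,q_1\uparrow\infty$ (they do, because Proposition \ref{ann-18} and Proposition \ref{weak} provide constants that depend on the exponents but are finite for each choice), and that the shift by one time unit costs only a factor $2^\alpha$ in the decay rate.
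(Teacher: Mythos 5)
Your proposal is correct and, for Part 1, follows precisely the paper's own route: fix $p_0<q<q_0$ with $(3/p_0-3/q_0)/2 = 3/2-\varepsilon$, use the compact support of $f$ to pass from $\|f\|_q$ to $\|f\|_{p_0}$, use the bounded domain $D_R$ to pass from $L^{q_0}$ down to $L^q$, and split $T(t,s) = T(t,t-1)T(t-1,s)$ to combine the short-time smoothing of Proposition \ref{ann-18}(2) with the long-time decay of Proposition \ref{ann-18}(1). Your treatment of $T(t,s)f$ itself without the gradient is also the same (no semigroup split needed), and the adjoint is handled identically, as in the paper.

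For Part 2 your route is a genuine, and arguably cleaner, variant of the paper's argument. The paper replaces $\partial_t T(t,s)f$ by $-L_+(t)T(t,s)f$ (via the second assertion of Proposition \ref{basic}, which is what forces $q>3/2$ and $f\in Z_q(D)$ there), writes $L_+(t)T(t,t-1)u(t-1)$, and then must invoke Lemma \ref{smoothing-Z} to guarantee $u(t-1)\in Z_{q_0}(D)$ before applying \eqref{weak-est2}. You instead differentiate the semigroup identity directly, $\partial_t T(t,s)f\big|_{t=t_0} = \partial_t T(t,t_0-1)\bigl[T(t_0-1,s)f\bigr]\big|_{t=t_0}$, and use \eqref{weak-est} of Proposition \ref{weak} over a unit time interval, which holds for every $h\in L^r_\sigma(D)$ with $1<r<\infty$. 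This avoids Lemma \ref{smoothing-Z} and the $Z_q$-interpretation altogether, and in fact does not even use the hypotheses $q>3/2$, $f\in W^{1,q}(D)$ (your estimate needs only $f\in L^q_\sigma(D)$ with compact support); those hypotheses in the statement are inherited from the paper's chosen proof rather than being intrinsic. The only bookkeeping one should make explicit — as you implicitly do — is that the derivative is computed at fixed $\tau=t_0-1$ and that the embeddings $W^{-1,r}(D_R)\hookrightarrow W^{-1,q}(D_R)$ for $r\ge q$ and the $C^1$-regularity from Proposition \ref{weak} justify identifying the two derivatives; this is routine. Overall the proof is correct and the exponent bookkeeping ($q_0\downarrow 1$, $r\uparrow\infty$, the shift by one time unit) is handled adequately.
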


\begin{proof}
Given $\varepsilon>0$ arbitrarily small as well as $q\in (1,\infty)$,
let us take $p_0$ and $q_0$ such that
$1<p_0<q<q_0<\infty$ and
$(3/p_0-3/q_0)/2=3/2-\varepsilon$.

Set $u(t)=T(t,s)f$ and suppose $t-s>2$.
By both assertions in Proposition \ref{ann-18} we find
\[
\|\nabla T(t,t-1)u(t-1)\|_{q_0,D_R}
\leq C\|u(t-1)\|_{q_0}
\leq C(t-s-1)^{-3/2+\varepsilon}\|f\|_{p_0}
\]
which implies \eqref{1st-led1} for $\nabla T(t,s)f$.
As for $T(t,s)f$ itself (without derivative),
the argument is straightforward without using
semigroup property.

To show the second assertion for $\partial_tu(t)$, we note that
$f\in Z_q(D)$ and, thereby,
$\partial_tu(t)=-L_+(t)u(t)$
provided $q>3/2$, see Proposition \ref{basic}.
By Lemma \ref{smoothing-Z} we know that
$T(t-1,s)f\in Z_{q_0}(D)$ for every $q_0\in (q,\infty)$ and
$t\in (s+1,\infty)$.
It then follows from \eqref{LqLr-noch} with \eqref{weak-est2} that
\begin{equation*}
\begin{split}
\|L_+(t)T(t,t-1)u(t-1)\|_{W^{-1,q_0}(D_R)}
&\leq C\|u(t-1)\|_{q_0}  \\
&\leq C(t-s-1)^{-3/2+\varepsilon}\|f\|_{p_0}
\end{split}
\end{equation*}
which proves \eqref{1st-led2} for $\partial_tT(t,s)f$.

Set $v(s)=T(t,s)^*g$, then we have $v(s)=T(s+1,s)^*v(s+1)$ by the
backward semigroup property.
We then take the same way as above; to be sure, we just describe
several lines only for \eqref{1st-led2}.
As mentioned at the end of the previous section, we have
\begin{equation}
\|L_-(s)T(t,s)^*g\|_{W^{-1,q}(D_R)}
\leq C(t-s)^{-(1+1/q)/2}\|g\|_q
\label{weak-adj0}
\end{equation}
for all $(t,s)\in\Lambda(\tau_*)$ and $g\in Z_q(D)$ with $q\in (3/2,\infty)$,
which corresponds to \eqref{weak-est2} for $T(t,s)$.
Furthermore, similarly to Lemma \ref{smoothing-Z},
we have
\[
v(s+1)=T(t,s+1)^*g=\widetilde T(t-s-1,0;t)g\in Z_{q_0}(D)
\]
for every $q_0\in (q,\infty)$, see \eqref{back-op}.
Therefore, we combine \eqref{weak-adj0} with \eqref{LqLr-noch} to obtain
\begin{equation*}
\begin{split}
\|L_-(s)T(s+1,s)^*v(s+1)\|_{W^{-1,q_0}(D_R)}
&\leq C\|v(s+1)\|_{q_0}  \\
&\leq C(t-s-1)^{-3/2+\varepsilon}\|g\|_{p_0}
\end{split}
\end{equation*}
which leads to \eqref{1st-led2} for $\partial_sT(t,s)^*g$.
\end{proof}

Let us proceed to the second stage of the local energy decay properties,
in which we intend to estimate the evolution operator still
over the bounded domain $D_R$
near the boundary for general data being in $f\in L^q_\sigma(D)$.
\begin{proposition}
Suppose that $\eta$ and $\omega$ fulfill \eqref{rigid} for some 
$\theta\in (0,1)$.
Let $R\in (R_0+1,\infty)$, where $R_0$ is as in \eqref{obstacle}.
Let $1<q<\infty$.
For each $m\in (0,\infty)$, there is a constant 
$C=C(m,q,R,\theta,D)>0$ such that
\begin{equation}
\begin{split}
\|T(t,s)f\|_{W^{1,q}(D_R)}+\|\partial_tT(t,s)f\|_{W^{-1,q}(D_R)}
&\leq C(t-s)^{-3/2q}\|f\|_q, \\
\|T(t,s)^*g\|_{W^{1,q}(D_R)}+\|\partial_sT(t,s)^*g\|_{W^{-1,q}(D_R)}
&\leq C(t-s)^{-3/2q}\|g\|_q,
\end{split}
\label{2nd-led}
\end{equation}
for all $(t,s)$ with
\[
t-s>2 \quad\mbox{as well as $0\leq s<t$}
\]
and $f,\, g\in L^q_\sigma(D)$ whenever \eqref{rigid-bound} is satisfied.
Here, the temporal derivatives are understood as in Proposition \ref{weak}.
\label{2nd-LED}
\end{proposition}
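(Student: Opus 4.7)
Following the paper's own remark, the one-step semigroup approach of Proposition \ref{1st-LED} yields only the rate $(t-s)^{-3/(2q)+\varepsilon}$ for general data, so a different mechanism is needed. My plan is to decompose the initial data itself (rather than an intermediate value) into a compactly supported part and a far-field part, and handle the two pieces by different means. For $t-s$ in a bounded window $(2,\tau_0]$ with any fixed $\tau_0>2$, the claimed bound reduces to Propositions \ref{ann-18} and \ref{weak} with constant depending on $\tau_0$, so I may assume $t-s>\tau_0$.

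Take $\phi\in C_0^\infty(B_{3R_0})$ with $\phi\equiv 1$ on $B_{2R_0}$ and an annulus $A\subset\{2R_0<|x|<3R_0\}$ carrying $\nabla\phi$, and write
\[
f=f_c+f_r,\qquad f_c:=\phi f-\mathbb B_A[f\cdot\nabla\phi],\qquad f_r:=f-f_c.
\]
By \eqref{bog-est-1}, $f_c,f_r\in L^q_\sigma(D)$ with norms controlled by $\|f\|_q$; $f_c$ is supported in $B_{3R_0}$ while $f_r$ vanishes on $B_{2R_0}$. Proposition \ref{1st-LED} applied to $f_c$, with $\varepsilon\in(0,3/(2q'))$, gives the desired inequality with the strictly faster rate $(t-s)^{-3/2+\varepsilon}$, disposing of the compactly supported part.

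For $f_r$, extend by zero to $\tilde f_r\in L^q_\sigma(\mathbb R^3)$ and compare $T(t,s)f_r$ with $U(t,s)\tilde f_r$ from Section \ref{whole}. Since $\mathrm{supp}\,\tilde f_r$ lies at distance $\geq R_0$ from $\partial D$, the heat-kernel representation \eqref{evo-wh} combined with Young's inequality yields
\[
\|U(t,s)\tilde f_r\|_{L^\infty(D_R)}\leq C(t-s)^{-3/(2q)}\|f\|_q,\qquad \|\nabla U(t,s)\tilde f_r\|_{L^\infty(D_R)}\leq C(t-s)^{-3/(2q)-1/2}\|f\|_q,
\]
and an analogous bound for $\partial_t U(t,s)\tilde f_r$ in $W^{-1,q}(D_R)$ via Lemma \ref{weak-whole}; thus $U(t,s)\tilde f_r$ alone meets the target rate on $D_R$. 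The correction $v:=T(t,s)f_r-U(t,s)\tilde f_r$ is divergence-free, vanishes initially, and carries the inhomogeneous trace $v|_{\partial D}=-U(t,s)\tilde f_r|_{\partial D}$, which enjoys the rate $(t-s)^{-3/(2q)}\|f\|_q$ by the same distance argument (and is exponentially smaller for moderate $t-s$). I would lift this trace by $\tilde v:=\chi U(t,s)\tilde f_r-\mathbb B_{A'}[U(t,s)\tilde f_r\cdot\nabla\chi]$ for a cut-off $\chi$ supported near $\partial D$, in the spirit of the parametrix $W(t,s)$ of Section \ref{regularity}; then $w:=v+\tilde v$ satisfies a non-autonomous Stokes system on a bounded region with homogeneous data and source decaying at rate $(t-s)^{-3/(2q)}$, to which the bounded-domain results of Section \ref{interior} (notably Lemma \ref{no-bc}) and Section \ref{regularity} (Proposition \ref{weak}) deliver the matching bound. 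The adjoint assertions follow by the same construction with $\widetilde U$ from \eqref{evo-adj-wh} and the adjoint half of Proposition \ref{1st-LED}.

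The main obstacle is the correction step: one must match the exact algebraic rate $(t-s)^{-3/(2q)}$ through the lifting and the bounded-domain Stokes evolution, without the $\varepsilon$-loss that plagued the naive $L^q$-$L^r$ approach. The decisive gain is the positive distance between $\mathrm{supp}\,\tilde f_r$ and $\partial D$, which pins the boundary trace of $U(t,s)\tilde f_r$ to the required decay rate and thereby exactly absorbs the cost of lifting and propagating it through the Stokes system.
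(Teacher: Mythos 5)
Your decomposition of the initial data into a compactly supported part $f_c$ and a far-field part $f_r$ is a reasonable reorganization of the paper's argument, and the opening moves (handling $f_c$ by Proposition \ref{1st-LED} and comparing $T(t,s)f_r$ with $U(t,s)\tilde f_r$, whose restriction to $D_R$ decays at the standard $L^q$--$L^\infty$ heat-kernel rate $(t-s)^{-3/2q}$) are sound. However, the proposal has a genuine gap at the decisive step, namely the treatment of the correction $w=v+\tilde v$.

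The claim that ``$w$ satisfies a non-autonomous Stokes system on a bounded region'' to which Section \ref{interior} (Lemma \ref{no-bc}) applies is incorrect. After you lift the boundary trace, $w$ still satisfies the \emph{exterior} problem \eqref{IVP} on all of $D$ (with homogeneous boundary data, zero initial data, and a compactly supported forcing $G(\tau)$ concentrated near $\partial D$). Restricting the domain does not produce a well-posed bounded-domain evolution, because the inner artificial boundary then carries unknown data. The correct tool is the Duhamel representation $w(t)=\int_s^t T(t,\tau)PG(\tau)\,d\tau$ with the \emph{exterior} evolution operator $T(t,\tau)$, and one must then split the integral into $t-\tau\le 2$ (controlled by the local-in-time estimates \eqref{LqLr-grad-noch}, \eqref{weak-est}) and $t-\tau>2$ (controlled by the local energy decay of Proposition \ref{1st-LED}, exactly because $G(\tau)$ has fixed compact support). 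Without the factor $(t-\tau)^{-3/2+\varepsilon}$ from Proposition \ref{1st-LED} in the large-$(t-\tau)$ regime, the near-$t$ part of the integral does not close to the rate $(t-s)^{-3/2q}$. Put differently, the ``decisive gain'' you attribute to the positive distance between $\mathrm{supp}\,\tilde f_r$ and $\partial D$ only yields exponential smallness of $G(\tau)$ near $\tau=s$; the long-time rate $(t-s)^{-3/2q}$ of $U(t,s)\tilde f_r$ is simply the standard heat-kernel rate without any distance improvement, and the true workhorse for propagating compactly supported data through the Duhamel integral is Proposition \ref{1st-LED}, which your argument invokes only for $f_c$ and not for the forcing $G$. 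Once this is repaired, the estimate becomes essentially identical in structure to the paper's computation of the integrals of $\alpha(\tau)$ and $\beta(\tau)$. Two further gaps worth noting: (i) Proposition \ref{1st-LED} for the temporal derivative requires $q>3/2$ (since it relies on $f\in Z_q(D)$ and \eqref{weak-est2}), and the case $q\le 3/2$ must be recovered by a bootstrap through $L^q$--$L^3$ as in the paper; (ii) the adjoint estimate is not an automatic replay, because one cannot differentiate $T(\tau,s)^*G(\tau)$ in $s$ naively, and a weak-form Duhamel argument (as in the paper's use of \eqref{duha-w1}--\eqref{duha-w2}) is needed.
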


\begin{proof}
By \eqref{LqLr-noch}, \eqref{LqLr-grad-noch} and
\eqref{weak-est} it suffices to prove
\eqref{2nd-led} for all $f,\,g\in C^\infty_{0,\sigma}(D)$.
Concerning the temporal derivatives 
$\partial_tT(t,s)f$ and $\partial_sT(t,s)^*g$,
it is also sufficient to show the assertion for $q\in (3/2,\infty)$;
in fact, once we have that for such $q$ (for instance, $q=3$),
\eqref{LqLr-noch} yields
\begin{equation*}
\begin{split}
\|\partial_tT(t,s)f\|_{W^{-1,q}(D_R)}
&\leq C\|L_+(t)T(t,s)f\|_{W^{-1,3}(D_R)} \\
&\leq C(t-s)^{-1/2}\|T((t+s)/2,s)f\|_3  \\
&\leq C(t-s)^{-3/2q}\|f\|_q
\end{split}
\end{equation*}
even if $q\in (1,3/2]$.

As in the previous study \cite[Section 4]{Hi18}, given 
$f\in C^\infty_{0,\sigma}(D)\subset C^\infty_{0,\sigma}(\mathbb R^3)$,
we regard the solution $T(t,s)f$ as the perturbation from a modification of the $\mathbb R^3$-flow
$U(t,s)f$ as follows:
\[
T(t,s)f=(1-\phi)U(t,s)f+\mathbb B[(U(t,s)f)\cdot\nabla\phi]+v(t),
\]
where $v(t)$ denotes the perturbation,
$\phi\in C_0^\infty(B_{3R_0})$ is a cut-off function satisfying
$\phi=1$ on $B_{2R_0}$ and
$\mathbb B=\mathbb B_{A_{R_0}}$ is the Bogovskii
operator on the domain $A_{R_0}=B_{3R_0}\setminus\overline{B_{R_0}}$,
see \eqref{bogov}.
From $L^q$-$L^\infty$ estimate \eqref{LqLr-whole} and \eqref{bog-est-1}
(together with the equation \eqref{eqn-wh} for $\partial_tU(t,s)f$),
it follows that
$(1-\phi)U(t,s)f+\mathbb B[(U(t,s)f)\cdot\nabla\phi]$ and its 
temporal derivative (even in $L^q(D_R)$) possess the desired decay rate
$(t-s)^{-3/2q}$.
Our task is thus to estimate
\begin{equation}
v(t)=T(t,s)\widetilde f+\int_s^tT(t,\tau)F(\tau)\,d\tau
\label{duha-1}
\end{equation}
and
\begin{equation}
\partial_tv(t)
=\partial_tT(t,s)\widetilde f+F(t)+\int_s^t\partial_tT(t,\tau)F(\tau)\,d\tau
\label{duha-2}
\end{equation}
where
$\widetilde f=\phi f-\mathbb B[f\cdot\nabla\phi]$
and
\begin{equation*}
\begin{split}
F(x,t)
&=-2\nabla\phi\cdot\nabla U(t,s)f
-[\Delta\phi+(\eta(t)+\omega(t)\times x)\cdot\nabla\phi]U(t,s)f  \\ 
&\quad -\mathbb B[(\partial_tU(t,s)f)\cdot\nabla\phi]
+\Delta\mathbb B[(U(t,s)f)\cdot\nabla\phi] \\ 
&\quad +(\eta(t)+\omega(t)\times x)\cdot
\nabla \mathbb B[(U(t,s)f)\cdot\nabla\phi] \\ 
&\quad -\omega(t)\times\mathbb B[(U(t,s)f)\cdot\nabla\phi].
\end{split}
\end{equation*}
The forcing term $F$ fulfills, see \cite[(4.2)]{Hi18},
\begin{equation}
\|F(t)\|_q
\leq
C(m+1)\|f\|_q
\left\{
\begin{array}{ll}
(t-s)^{-1/2}, & 0<t-s<1, \\
(t-s)^{-3/2q}, \qquad &t-s\geq 1,
\end{array}
\right.
\label{remainder-est}
\end{equation}
as well as $\mbox{div $F$}=\Delta p=0$ (so that $PF=F$) which follows at once
from the equation that $\{v,p\}$ obeys, 
where $p$ is the pressure associated with
$T(t,s)f$.
Given $q\in (1,\infty)$, let us take $\varepsilon >0$ so small
that $3/2-\varepsilon >3/2q$.
Suppose $t-s>2$.
By Proposition \ref{1st-LED} with such $\varepsilon$ and
by \eqref{remainder-est}
it is seen that $T(t,s)\widetilde f$ and $\partial_tT(t,s)\widetilde f+F(t)$ 
satisfy the desired decay property.

Let us consider the last terms of \eqref{duha-1}--\eqref{duha-2}.
Concerning the latter one for the temporal derivative
we can apply \eqref{1st-led2}
since $F\in L^q_\sigma(D)\cap W^{1,q}(D)$
with $F(x,t)=0$ a.e. $|x|\geq 3R_0$
(note that estimate of $\nabla F$ is not needed).
It follows from Propositions \ref{1st-LED}, \ref{weak} and \ref{ann-18}
together with \eqref{remainder-est} that
\begin{equation*}
\begin{split}
&\|T(t,\tau)F(\tau)\|_{W^{1,q}(D_R)}
\leq C(m+1)\|f\|_q\;\alpha(\tau), \\
&\|\partial_tT(t,\tau)F(\tau)\|_{W^{-1,q}(D_R)}
\leq C(m+1)\|f\|_q\;\beta(\tau),
\end{split}
\end{equation*}
with
\begin{equation*}
\begin{split}
\alpha(\tau)
&=(t-\tau)^{-1/2}(1+t-\tau)^{-1+\varepsilon}
(\tau-s)^{-1/2}(1+\tau-s)^{-3/2q+1/2}, \\
\beta(\tau)
&=(t-\tau)^{-(1+1/q)/2}(1+t-\tau)^{-1+1/2q+\varepsilon}
(\tau-s)^{-1/2}(1+\tau-s)^{-3/2q+1/2},
\end{split}
\end{equation*}
for $\tau\in (s,t)$.
Then we see that
\[
\int_s^{(s+t)/2}\alpha(\tau)\,d\tau
\leq C(t-s)^{-3/2+\varepsilon}
\left\{
\begin{array}{ll}
1, & q<3/2, \\ 
\log (t-s), &q=3/2, \\ 
(t-s)^{1-3/2q}, \quad &q>3/2,
\end{array}
\right.
\]
as well as
\[
\int_{(s+t)/2}^t\alpha(\tau)\,d\tau
\leq C(t-s)^{-3/2q}
\]
and that the same estimates as above hold for $\beta(\tau)$, too.
We have completed the proof of $\mbox{\eqref{2nd-led}}_1$.

It remains to discuss the adjoint $T(t,s)^*$.
Given $g\in C_{0,\sigma}^\infty(D)$, we describe the solution
$T(t,s)^*g$ in the form
\[
T(t,s)^*g=(1-\phi)U(t,s)^*g+\mathbb B[(U(t,s)^*g)\cdot\nabla\phi]+u(s),
\]
where $\phi$ and $\mathbb B$ are the same as before, while
$U(t,s)^*$ is the evolution operator for the backward problem
\eqref{back-wh}--\eqref{backside-wh} in the whole space
and the first two terms above
possess the decay rate $(t-s)^{-3/2q}$.
Given vector field $\psi\in C_0^\infty(D)^3$,
we know from Lemma \ref{proj} that $P\psi\in Z_q(D)$ for every
$q\in (3/2,\infty)$, which implies \eqref{evo-eqn} with
$f=P\psi$ for such $q$.
With this at hand, as in \cite[(4.17)]{Hi18}, we utilize
\eqref{skew} and \eqref{duality} to compute
\begin{equation*}
\begin{split}
&\quad \partial_\tau\langle P\psi, T(\tau,s)^*u(\tau)\rangle_D \\
&=\partial_\tau\langle T(\tau,s)P\psi, u(\tau)\rangle_D \\
&=\langle T(\tau,s)P\psi, \partial_\tau u(\tau)\rangle_D
-\langle L_+(\tau)T(\tau,s)P\psi, u(\tau)\rangle_D \\
&=\langle T(\tau,s)P\psi, \partial_\tau u(\tau)-L_-(\tau)u(\tau)\rangle_D.
\end{split}
\end{equation*}
This implies the Duhamel formula in the weak form
\begin{equation}
\langle\psi, u(s)\rangle_D
=\langle\psi, T(t,s)^*\widetilde g\rangle_D
+\int_s^t\langle\psi, T(\tau,s)^*G(\tau)\rangle_D\,d\tau
\label{duha-w1}
\end{equation}
for all $\psi\in C_0^\infty(D)^3$ on account of $Pu(s)=u(s)$.
Here, $\widetilde g=\phi g-\mathbb B[g\cdot\nabla\phi]$ and
\begin{equation*}
\begin{split}
G(y,s)
&=-2\nabla\phi\cdot\nabla U(t,s)^*g
-[\Delta\phi-(\eta(s)+\omega(s)\times y)\cdot\nabla\phi]U(t,s)^*g  \\
&\quad +\mathbb B[(\partial_sU(t,s)^*g)\cdot\nabla\phi]
+\Delta\mathbb B[(U(t,s)^*g)\cdot\nabla\phi] \\ 
&\quad -(\eta(s)+\omega(s)\times y)\cdot\nabla 
\mathbb B[(U(t,s)^*g)\cdot\nabla\phi] \\ 
&\quad +\omega(s)\times\mathbb B[(U(t,s)^*g)\cdot\nabla\phi],
\end{split}
\end{equation*}
both of which are solenoidal.
It follows from \eqref{duha-w1} that
\begin{equation}
\langle\psi, \partial_su(s)\rangle_D
=\langle\psi, \partial_sT(t,s)^*\widetilde g\rangle_D
-\langle\psi, G(s)\rangle_D
+\int_s^t\langle\psi, \partial_sT(\tau,s)^*G(\tau)\rangle_D\,d\tau
\label{duha-w2}
\end{equation}
for all $\psi\in C_0^\infty(D)^3$.
Since we intend to derive estimates over $D_R$, let us consider
the test functions $\psi\in C_0^\infty(D_R)^3$ in
\eqref{duha-w1}--\eqref{duha-w2}.
Suppose $t-s>2$.
We know that $\|G(s)\|_q$ enjoys exactly the same estimate as in
\eqref{remainder-est}, see \cite[(4.16)]{Hi18}.
By use of this combined with Propositions \ref{1st-LED}, \ref{ann-18} and
\begin{equation}
\|\partial_sT(t,s)^*g\|_{W^{-1,q}(D_R)}
\leq C(t-s)^{-(1+1/q)/2}\|g\|_q
\label{weak-adj}
\end{equation}
for all $(t,s)\in\Lambda(\tau_*)$ and $g\in L^q_\sigma(D)$, which
corresponds to \eqref{weak-est} for $T(t,s)$,
we find the desired estimates for
$\|u(s)\|_{q,D_R}$ and 
$\|\partial_su(s)\|_{W^{-1,q}(D_R)}$, in which
computations are essentially the same as those
for the last terms of \eqref{duha-1}--\eqref{duha-2} although we employ
the duality.
One can get the desired estimate of
$\|\nabla u(s)\|_{q,D_R}$ as well by taking test functions
of the form $\psi=\mbox{div $\Psi$}$ with 
$\Psi\in C_0^\infty(D_R)^{3\times 3}$ and then by
adopting the same argument as above after integration by parts
in \eqref{duha-w1}.
The proof is complete.
\end{proof}

As a corollary to Proposition \ref{2nd-LED} as well as
Proposition \ref{weak},
one can derive the following asymptotic 
behavior of the pressures associated with $T(t,s)f$ and $T(t,s)^*g$.
This plays an important role in the next section.
\begin{corollary}
Suppose that $\eta$ and $\omega$ fulfill \eqref{rigid} 
for some $\theta\in (0,1)$.
Let $R\in (R_0+1,\infty)$, where $R_0$ is as in \eqref{obstacle}.
Let $1<q<\infty$.
Given $f\in L^q_\sigma(D)$, we denote by
$p(t)$ the pressure associated with $T(t,s)f$ subject to
$\int_{D_R}p\,dx=0$, which is determined by Proposition \ref{weak}.
Let $\phi\in C_0^\infty(B_R)$ satisfy $\phi=1$  in $B_{R_0+1}$,
and $\mathbb B=\mathbb B_{A_R}$ the Bogovskii operator on the bounded domain 
$A_R=\{R_0<|x|<R\}$, see \eqref{bogov}.
Then, for each $m\in (0,\infty)$, there is a constant $C=C(m,q,R,\theta,D)>0$ 
such that
\begin{equation}
\begin{split}
&\|p(t)\|_{q,D_R}
+\|\mathbb B[(\partial_tT(t,s)f)\cdot\nabla\phi]\|_{q,A_R}  \\
&\leq C\|f\|_q
\left\{
\begin{array}{ll}
(t-s)^{-(1+1/q)/2}, \qquad& 0<t-s\leq 2, \\
(t-s)^{-3/2q}, & t-s>2,
\end{array}
\right.
\end{split}
\label{pressure-est}
\end{equation}
for all $f\in L^q_\sigma(D)$ whenever \eqref{rigid-bound} is satisfied.
Here, the temporal derivative is understood as in Proposition \ref{weak}.
The same assertion holds true for $T(t,s)^*g$ with
$g\in L^q_\sigma(D)$ and the associated pressure as well.
\label{pressure}
\end{corollary}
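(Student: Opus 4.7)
The plan is to split both quantities on the left-hand side of \eqref{pressure-est} into the regimes $0<t-s\leq 2$ and $t-s>2$, and in each regime reduce the estimate to control of $\|u(t)\|_{W^{1,q}(D_R)}$ and $\|\partial_tu(t)\|_{W^{-1,q}(D_R)}$. For short times I would invoke Proposition \ref{weak} (with $\tau_*=2$), and for long times I would invoke Proposition \ref{2nd-LED}. Since the same two quantities feed both the pressure bound and the Bogovskii bound, a single case-split handles all four estimates in \eqref{pressure-est}.

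For the pressure term, when $0<t-s\leq 2$ the estimate \eqref{pressure-reg} applied with $\tau_*=2$ yields the rate $(t-s)^{-(1+1/q)/2}$ directly. When $t-s>2$, I would combine \eqref{pressure-cont} with Proposition \ref{2nd-LED} (taking $q\in(1,\infty)$), which gives $\|u(t)\|_{W^{1,q}(D_R)}+\|\partial_tu(t)\|_{W^{-1,q}(D_R)}\leq C(t-s)^{-3/2q}\|f\|_q$, and hence the rate $(t-s)^{-3/2q}$.

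For the Bogovskii term, the key observation is that $\nabla\phi$ is supported in a compact subset of $A_R$ that is bounded away from $\partial D_R$: indeed $\nabla\phi=0$ on $B_{R_0+1}\supset\overline{B_{R_0}}\supset\mathbb R^3\setminus D$, and $\nabla\phi=0$ near $\partial B_R$ because $\phi\in C_0^\infty(B_R)$. Consequently, for any $\psi\in W^{1,q^\prime}(A_R)$, the vector field $\psi\nabla\phi$ extended by zero lies in $W^{1,q^\prime}_0(D_R)^3$ with $\|\psi\nabla\phi\|_{W^{1,q^\prime}(D_R)}\leq C\|\psi\|_{W^{1,q^\prime}(A_R)}$. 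Pairing this against $\partial_tT(t,s)f$ in the sense of Proposition \ref{weak} and using \eqref{bog-est-2}, I obtain
\[
\|\mathbb B[(\partial_tT(t,s)f)\cdot\nabla\phi]\|_{q,A_R}
\leq C\|(\partial_tT(t,s)f)\cdot\nabla\phi\|_{W^{1,q^\prime}(A_R)^*}
\leq C\|\partial_tT(t,s)f\|_{W^{-1,q}(D_R)}.
\]
Bounding the right-hand side by \eqref{weak-est} for $t-s\leq 2$ and by $\mbox{\eqref{2nd-led}}_1$ for $t-s>2$ delivers the desired two-regime estimate with exactly the same rates as for the pressure.

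The assertion for $T(t,s)^*g$ follows by repeating the argument verbatim, using the adjoint counterpart \eqref{weak-adj} of \eqref{weak-est} together with the estimate for $\partial_sT(t,s)^*g$ in \eqref{2nd-led}, both of which were indicated at the end of Section \ref{regularity} and in the proof of Proposition \ref{2nd-LED}. The main technical point requiring care, and really the only nontrivial step, is the support verification for $\psi\nabla\phi$ that enables passage from the $W^{1,q^\prime}(A_R)^*$ norm to the $W^{-1,q}(D_R)$ norm; once this is in place the remainder is a direct application of results already established.
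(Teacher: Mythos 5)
Your proposal is correct and follows essentially the same route as the paper's proof: short times via \eqref{pressure-reg} and \eqref{weak-est}, long times via \eqref{pressure-cont} combined with \eqref{2nd-led}, and the Bogovskii term via \eqref{bog-est-2} and the passage from $W^{1,q'}(A_R)^*$ to $W^{-1,q}(D_R)$. The only difference is that you spell out the support verification (that $\psi\nabla\phi$ extended by zero lands in $W^{1,q'}_0(D_R)$, using that $\nabla\phi$ is compactly supported in $A_R\subset D_R$), a step the paper leaves implicit; this is a useful clarification, not a deviation.
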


\begin{proof}
Estimate \eqref{pressure-est} near $t=s$ for the pressure was already
obtained in \eqref{pressure-reg}.
By \eqref{bog-est-2} we have
\begin{equation*}
\begin{split}
\|\mathbb B[(\partial_tT(t,s)f)\cdot\nabla\phi]\|_{q,A_R}
&\leq C\|(\partial_tT(t,s)f)\cdot\nabla\phi\|_{W^{1,q^\prime}(A_R)^*}  \\
&\leq C\|\partial_tT(t,s)f\|_{W^{-1,q}(D_R)},
\end{split}
\end{equation*}
which together with \eqref{pressure-cont} 
implies that
\eqref{pressure-est} follows from
\eqref{2nd-led} for large $(t-s)$ as well as \eqref{weak-est}
for small $(t-s)$.
\end{proof}

Another corollary to Proposition \ref{2nd-LED} is the $L^\infty$-estimate.
\begin{corollary}
Suppose that $\eta$ and $\omega$ fulfill \eqref{rigid} for some 
$\theta\in (0,1)$.
Let $R\in (R_0+1,\infty)$, where $R_0$ is as in \eqref{obstacle}.
Let $1<q<\infty$.
For each $m\in (0,\infty)$, 
there is a constant $C=C(m,q,R,\theta,D)>0$ such that
\begin{equation}
\begin{split}
\|T(t,s)f\|_{\infty,D_R}
&\leq C(t-s)^{-3/2q}\|f\|_q, \\
\|T(t,s)^*g\|_{\infty,D_R}
&\leq C(t-s)^{-3/2q}\|g\|_q,
\end{split}
\label{inf-led}
\end{equation}
for all $(t,s)$ with
\[
t-s>2 \quad\mbox{as well as $0\leq s<t$}
\]
and $f,\, g\in L^q_\sigma(D)$ 
whenever \eqref{rigid-bound} is satisfied.
\label{inf-LED}
\end{corollary}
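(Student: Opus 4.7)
The plan is to reduce the $L^\infty$ bound over $D_R$ to the $W^{1,p}$ bound already furnished by Proposition \ref{2nd-LED}, via the Sobolev embedding $W^{1,p}(D_R)\hookrightarrow L^\infty(D_R)$, which holds for $p>3$ because $D_R$ is a bounded Lipschitz domain in $\mathbb{R}^3$. Writing out the two regimes $q>3$ and $q\leq 3$ and balancing time intervals is the entire content of the proof; there is no real analytic obstacle.

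If $q\in(3,\infty)$, Proposition \ref{2nd-LED} and Sobolev immediately give
\[
\|T(t,s)f\|_{\infty,D_R}\leq C\|T(t,s)f\|_{W^{1,q}(D_R)}\leq C(t-s)^{-3/(2q)}\|f\|_q
\]
for all $t-s>2$, and the same for $T(t,s)^{*}$ by the adjoint version of Proposition \ref{2nd-LED} stated there.

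If $q\in(1,3]$, I fix some $q_1>3$ (say $q_1=4$) and chain through the semigroup property. For $t-s>4$, I set $\tau=(t+s)/2$, so that $t-\tau=\tau-s=(t-s)/2>2$. Proposition \ref{ann-18} (first assertion) yields
\[
\|T(\tau,s)f\|_{q_1}\leq C\bigl((t-s)/2\bigr)^{-(3/q-3/q_1)/2}\|f\|_q,
\]
and the already-proven $q_1>3$ case applied to $T(t,\tau)$ acting on $T(\tau,s)f$ gives
\[
\|T(t,\tau)\,T(\tau,s)f\|_{\infty,D_R}\leq C\bigl((t-s)/2\bigr)^{-3/(2q_1)}\|T(\tau,s)f\|_{q_1}.
\]
The two exponents collapse as $-3/(2q_1)-(3/q-3/q_1)/2=-3/(2q)$, which is the key identity making the rate independent of the auxiliary $q_1$. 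For $2<t-s\leq 4$, the quantity $(t-s)^{-3/(2q)}$ is bounded between two positive constants, so it suffices to prove a uniform bound; I again split at $\tau=(t+s)/2$ (now with $(t-s)/2\in(1,2]$), use Proposition \ref{ann-18} to get $\|T(\tau,s)f\|_{q_1}\leq C\|f\|_q$, and use the local gradient estimate \eqref{LqLr-grad-noch} (with, say, $\tau_{*}=2$) together with Sobolev to bound $\|T(t,\tau)h\|_{\infty,D}\leq C\|h\|_{q_1}$. Absorbing this constant into the rate gives the conclusion.

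The assertion for the adjoint $T(t,s)^{*}$ is obtained by the identical argument applied in the reverse order via the backward semigroup identity $T(t,s)^{*}=T(\tau,s)^{*}T(t,\tau)^{*}$, using the adjoint versions of Proposition \ref{2nd-LED} and Proposition \ref{ann-18}. The only thing to check, which I would verify in passing, is that the local gradient estimate \eqref{LqLr-grad-noch} is likewise available for $T(t,s)^{*}$, as already noted in the paper after Proposition \ref{ann-18}. Since every ingredient is in hand, the proof is pure bookkeeping of exponents.
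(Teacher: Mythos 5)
Your argument is correct and follows essentially the same route as the paper's own proof: Sobolev embedding from Proposition \ref{2nd-LED} when $q>3$, and a midpoint split via the semigroup property plus Proposition \ref{ann-18} when $q\leq 3$ (the paper uses $q_1=6$ where you use $q_1=4$, which is immaterial). The only difference is that you explicitly dispose of the intermediate range $2<t-s\leq 4$ via \eqref{LqLr-grad-noch} with $\tau_*=2$, a subcase the paper's two-line proof handles implicitly by applying the $L^6$--$L^\infty$ bound for all $(t-s)/2>1$.
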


\begin{proof}
$L^\infty$-estimate follows directly from \eqref{2nd-led} together with the Sobolev embedding when $q>3$.
If $q\leq 3$, then we have
\[
\|T(t,s)f\|_{\infty,D_R}
\leq C(t-s)^{-1/4}\|T((t+s)/2,s)f\|_6
\]
which leads to \eqref{inf-led} by 
the first assertion of Proposition \ref{ann-18}.
\end{proof}

\section{Proof of the main theorems}
\label{proof}

In the final section we complete the proof of the main results
on decay estimates of gradient of the evolution
operator $T(t,s)$ and its adjoint $T(t,s)^*$ as well as $L^\infty$-decay
estimates.
\medskip

\noindent
{\it Proof of Theorem \ref{main}}.
Let $1<q<\infty$ ($q>3/2$ for $L^\infty$-estimates)
and fix $R\in (R_0+1,\infty)$, where $R_0$ is as in \eqref{obstacle}.
It then suffices to prove
\begin{equation}
\begin{split}
&\|\nabla T(t,s)f\|_{q,\mathbb R^3\setminus B_R}
\leq C(t-s)^{-\min\{1/2,\,3/2q\}}\|f\|_q,  \\
&\|T(t,s)f\|_{\infty,\mathbb R^3\setminus B_R}
\leq C(t-s)^{-3/2q}\|f\|_q,
\end{split}
\label{outside}
\end{equation}
and
\begin{equation}
\begin{split}
&\|\nabla T(t,s)^*g\|_{q,\mathbb R^3\setminus B_R}
\leq C(t-s)^{-\min\{1/2,\,3/2q\}}\|g\|_q,  \\
&\|T(t,s)^*g\|_{\infty,\mathbb R^3\setminus B_R}
\leq C(t-s)^{-3/2q}\|g\|_q,
\end{split}
\label{outside-adj}
\end{equation}
for all $(t,s)$ with $t-s>2$ as well as $0\leq s<t$
and $f,\, g\in C^\infty_{0,\sigma}(D)$.
From this combined with \eqref{2nd-led}, \eqref{inf-led}, 
Proposition \ref{ann-18} 
and the semigroup property we conclude Theorem \ref{main}.
Note that \eqref{L-inf} for $t-s\leq 2$ follows from Proposition \ref{ann-18}
together with an embedding relation and 
that \eqref{L-inf} for $q>3/2$ yields \eqref{L-inf}
even for $q\leq 3/2$ 
on account of the semigroup property and \eqref{LqLr-noch}.

Let us take a cut-off function $\phi\in C_0^\infty(B_R)$
and the Bogovskii operator $\mathbb B=\mathbb B_{A_R}$
as in Corollary \ref{pressure}.
Given $f\in C^\infty_{0,\sigma}(D)$, we denote by 
$p(t)$ the pressure associated with the velocity $T(t,s)f$ such that
$\int_{D_R}p\,dx=0$.
Set
\begin{equation}
v(t)=(1-\phi)T(t,s)f+\mathbb B[(T(t,s)f)\cdot\nabla\phi], \qquad
p_v(t)=(1-\phi)p(t).
\label{cut-ext}
\end{equation}
Since $v(t)=T(t,s)f$ in $\mathbb R^3\setminus B_R$, let us consider
$\|\nabla v(t)\|_{q,\mathbb R^3}$ and $\|v(t)\|_{\infty,\mathbb R^3}$ by using
\begin{equation}
v(t)=U(t,s)\widetilde f+\int_s^t U(t,\tau)P_{\mathbb R^3}H(\tau)\,d\tau
\label{duha-whole}
\end{equation}
where
$P_{\mathbb R^3}=I+{\cal R}\otimes {\cal R}$ is the Fujita-Kato projection
in the whole space,
$\widetilde f=(1-\phi)f+\mathbb B[f\cdot\nabla\phi]
\in C_{0,\sigma}^\infty(\mathbb R^3)$
and
\begin{equation*}
\begin{split}
H(x,t)
&=2\nabla\phi\cdot\nabla T(t,s)f
+\{\Delta\phi+(\eta+\omega\times x)\cdot\nabla\phi\}T(t,s)f   \\
&\quad -\Delta\mathbb B[(T(t,s)f)\cdot\nabla\phi]
-(\eta+\omega\times x)\cdot\nabla\mathbb B[(T(t,s)f)\cdot\nabla\phi]  \\
&\quad  +\omega\times \mathbb B[(T(t,s)f)\cdot\nabla\phi]   \\
&\quad +\mathbb B[(\partial_tT(t,s)f)\cdot\nabla\phi]-(\nabla\phi)p.
\end{split}
\end{equation*}
Among several terms of which $H$ consists,
the last two terms are always delicate in cut-off procedures, but we have
Corollary \ref{pressure} and that is why we have made effort to analyze
$\partial_tT(t,s)$ in Propositions \ref{1st-LED} and \ref{2nd-LED},
while the other terms are harmless.
Clearly, $H=0$ for $|x|\geq R$, and it is seen 
from \eqref{2nd-led}, \eqref{pressure-est} 
and the second assertion of Proposition \ref{ann-18} that
\begin{equation}
\|H(t)\|_{r,\mathbb R^3}\leq C(m+1)\|f\|_q
\left\{
\begin{array}{ll}
(t-s)^{-(1+1/q)/2}, \quad &0<t-s\leq 2, \\
(t-s)^{-3/2q}, & t-s>2,
\end{array}
\right.
\label{G-est}
\end{equation}
for every $r\in (1,q]$.

Suppose $t-s>2$.
By \eqref{LqLr-whole} 
the first term $U(t,s)\widetilde f$ of 
\eqref{duha-whole} satisfies the desired estimate.
Let us consider the second term of \eqref{duha-whole}.
To this end, we combine \eqref{G-est} with \eqref{LqLr-whole} to observe
\begin{equation*}
\begin{split}
\|\nabla U(t,\tau)P_{\mathbb R^3}H(\tau)\|_{q,\mathbb R^3}
&\leq C(m+1)\|f\|_q \;\widetilde\alpha(\tau), \\
\|U(t,\tau)P_{\mathbb R^3}H(\tau)\|_{\infty,\mathbb R^3}
&\leq C(m+1)\|f\|_q\;\widetilde\beta(\tau),
\end{split}
\end{equation*}
with
\begin{equation*}
\begin{split}
&\widetilde\alpha(\tau)=
(t-\tau)^{-1/2}(1+t-\tau)^{-(3/r-3/q)/2}
(\tau-s)^{-(1+1/q)/2}(1+\tau-s)^{-1/q+1/2}, \\
&\widetilde\beta(\tau)=
(t-\tau)^{-3/2q}(1+t-\tau)^{-(3/r-3/q)/2}
(\tau-s)^{-(1+1/q)/2}(1+\tau-s)^{-1/q+1/2},
\end{split}
\end{equation*}
for $\tau\in (s,t)$,
where $r\in (1,q]$ will be soon chosen appropriately.
Then we have
\[
\int_s^{(s+t)/2}\widetilde\alpha(\tau)\,d\tau
\leq C(t-s)^{-(3/r-3/q)/2-1/2}
\left\{
\begin{array}{ll}
1, & q<3/2, \\
\log (t-s), &q=3/2, \\
(t-s)^{1-3/2q}, \quad &q>3/2.
\end{array}
\right.
\]
By a suitable choice of $r\in (1,q]$, that is,
\[
r=q<3/2, \qquad
r<3/2=q, \qquad
r\leq 3/2<q,
\]
we find
\[
\int_s^{(s+t)/2}\widetilde\alpha(\tau)\,d\tau\leq C(t-s)^{-1/2}
\]
for every $q\in (1,\infty)$.
On the other hand, we observe
\begin{equation*}
\begin{split}
\int_{(s+t)/2}^t \widetilde\alpha(\tau)\,d\tau
&\leq C
\left\{
\begin{array}{ll}
(t-s)^{-3/2q+1/2}, & q\leq 3/2, \\
(t-s)^{-3/2q}, \quad & q>3/2,
\end{array}
\right.  \\
&\leq C
\left\{
\begin{array}{ll}
(t-s)^{-1/2}, & q\leq 3, \\
(t-s)^{-3/2q}, \quad& q>3
\end{array}
\right.
\end{split}
\end{equation*}
where $r$ is chosen to be close to $1$ in such a way that
$1/r>1/q+1/3$ for the case $q>3/2$, 
while it is enough to choose $r=q$ for the other case $q\leq 3/2$.
Summing up all computations above, 
we are led to the gradient estimate in \eqref{outside}.
$L^\infty$-estimate is discussed similarly by use of 
$\widetilde\beta(\tau)$ above as long as $q>3/2$.

Given $g\in C_{0,\sigma}^\infty(D)$,
we next consider $T(t,s)^*g$ together with the associated pressure
$\sigma(s)$ such that
$\int_{D_R}\sigma\,dy=0$, see \eqref{back}.
As in \eqref{cut-ext}, we set
\begin{equation}
u(s)=(1-\phi)T(t,s)^*g+\mathbb B[(T(t,s)^*g)\cdot\nabla\phi], \qquad
\sigma_u(s)=(1-\phi)\sigma(s).
\label{cut-extadj}
\end{equation}
The same argument as above with use of the adjoint
$U(t,s)^*$ being the solution operator to the backward system
\eqref{back-wh}--\eqref{backside-wh} in the whole space
implies \eqref{outside-adj}.
The proof of Theorem \ref{main} is thus complete.
\hfill
$\Box$
\medskip

Let us close the paper with a brief description of the proof
of Theorem \ref{adj}.
\medskip

\noindent
{\it Proof of Theorem \ref{adj}}.
Given $g\in C_{0,\sigma}^\infty(D)$,
as in the last part of the proof of Theorem \ref{main},
we still consider the strong solution $T(t,s)^*g$ and single out
the associated pressure
$\sigma(s)$ satisfying the side condition
$\int_{D_R}\sigma\,dy=0$.

Toward \eqref{adj-Lorentz} with $r=3$ (the most important case for us),
as was discussed in \cite[Section 8]{HiS}
for the autonomous case,
the real interpolation is performed at the level of
\eqref{2nd-led} and \eqref{pressure-est}
for the adjoint $T(t,s)^*$
(as well as \eqref{LqLr-noch} and \eqref{LqLr-grad-noch}) to find that
\begin{equation}
\|\nabla^jT(t,s)^*g\|_{L^{q,\rho}(D_R)}
\leq C\|g\|_{q,\rho}
\left\{
\begin{array}{ll}
(t-s)^{-j/2}, &0<t-s\leq 2, \\
(t-s)^{-3/2q}, \qquad &t-s>2,
\end{array}
\right.
\label{loc-interpo-1}
\end{equation}
with $j=0,\, 1$ and that
\begin{equation}
\begin{split}
&\|\sigma(s)\|_{L^{q,\rho}(D_R)}+
\|\mathbb B[(\partial_sT(t,s)^*g)\cdot\nabla\phi]\|_{L^{q,\rho}(A_R)}  \\
&\leq C\|g\|_{q,\rho}
\left\{
\begin{array}{ll}
(t-s)^{-(1+1/q)/2}, \qquad &0<t-s\leq 2, \\
(t-s)^{-3/2q}, &t-s>2,
\end{array}
\right.
\end{split}
\label{loc-interpo-2}
\end{equation}
where $1<q<\infty$, $1\leq\rho\leq\infty$ and $g\in L^{q,\rho}_\sigma(D)$.
We then proceed to the final step in this section to obtain
$\mbox{\eqref{outside-adj}}_1$ in which $L^q$-norm is now replaced by
$L^{q,\rho}$-norm.
To this end, we consider $u(s)$ given by \eqref{cut-extadj} and
have only to estimate
$\|\nabla u(s)\|_{L^{q,\rho}(\mathbb R^3)}$
by making use of $L^{q,\rho}$-$L^{r,\rho}$ estimates of
$\nabla U(t,s)^*$ and the estimate of the Bogovskii operator
$\mathbb B=\mathbb B_{A_R}$ in $L^{q,\rho}(A_R)$,
which follows from \eqref{bog-est-1} by interpolation,
as well as \eqref{loc-interpo-1}--\eqref{loc-interpo-2}.
The argument ends up with continuity and
that is why the case $\rho=\infty$ is missing in \eqref{adj-Lorentz}.

Finally, following the art developed by Yamazaki \cite{Y},
we perform the real interpolation for the sublinear operator:
$g\mapsto \|\nabla T(t,\cdot)^*g\|_{r,1}$
(for fixed $t>0$ and $r\in (3/2,3]$) to conclude \eqref{adj-int}.
The proof is complete.
\hfill
$\Box$

\end{document}